\newtheorem{theorem}{Theorem}[section]
\newtheorem{lemma}{Lemma}[section]
\newtheorem{corollary}{Corollary}[section]
\newtheorem{proposition}{Proposition}[section]
\newtheorem{remark}{Remark}[section]
\newtheorem{example}{Example}[section]
\newtheorem{definition}{Definition}[section]
\newcommand{\RT}{\mathbb{R}^2}
\newcommand{\R}{\mathbb R}
\newcommand{\e}{\varepsilon}
\newcommand{\ds}{\displaystyle}
\newcommand{\RN}{\mathbb R^N}
\newcommand{\iy}{\infty}
\newcommand{\dd}{\delta}
\newcommand{\na}{\nabla}
\newcommand{\al}{\alpha}
\newcommand{\ti}{\tilde}
\newcommand{\re}[1]{(\ref{#1})}
\newcommand{\rg}{\rightarrow}
\newcommand{\lab}{\label}
\newcommand{\bt}{\begin{theorem}}
\newcommand{\et}{\end{theorem}}
\newcommand{\bl}{\begin{lemma}}
\newcommand{\el}{\end{lemma}}
\newcommand{\bd}{\begin{definition}}
\newcommand{\ed}{\end{definition}}
\newcommand{\bc}{\begin{corollary}}
\newcommand{\ec}{\end{corollary}}
\newcommand{\bp}{\begin{proof}}
\newcommand{\ep}{\end{proof}}
\newcommand{\bx}{\begin{example}}
\newcommand{\ex}{\end{example}}
\newcommand{\bi}{\begin{exercise}}
\newcommand{\ei}{\end{exercise}}
\newcommand{\bo}{\begin{proposition}}
\newcommand{\eo}{\end{proposition}}
\newcommand{\br}{\begin{remark}}
\newcommand{\er}{\end{remark}}
\newcommand{\be}{\begin{equation}}
\newcommand{\ee}{\end{equation}}
\newcommand{\ba}{\begin{align}}
\newcommand{\ea}{\end{align}}
\newcommand{\bn}{\begin{enumerate}}
\newcommand{\en}{\end{enumerate}}
\newcommand{\bg}{\begin{align*}}
\newcommand{\eg}{\end{align*}}
\newcommand{\bcs}{\begin{cases}}
\newcommand{\ecs}{\end{cases}}
\newcommand{\bean}{\begin{eqnarray*}}
\newcommand{\eean}{\end{eqnarray*}}
\newcommand{\ud}{\mathrm{d}}
\title[Schr\"{o}dinger equation in $\mathbb R^2$ with saddle potential]
{Solutions concentrating around the saddle points of the potential for Schr\"{o}dinger equations with critical exponential growth }
\author[J. Zhang]{Jianjun Zhang}
\author[J. M.\ do \'O]{Jo\~ao Marcos do \'O}
\author[P. K. Mishra]{Pawan Kumar Mishra}
\address[J. Zhang]{\newline\indent School of Science
\newline\indent
Chongqing Jiaotong University
\newline\indent
Chongqing 400074, PR China}
\email{\href{mailto:zhangjianjun09@tsinghua.org.cn}{zhangjianjun09@tsinghua.org.cn}}
\address[J. M. do \'O]{\newline\indent Department of Mathematics
\newline\indent
Federal University of Para\'{\i}ba
\newline\indent
58051-900, Jo\~ao Pessoa-PB, Brazil}
\email{\href{mailto:jmbo@pq.cnpq.br}{jmbo@pq.cnpq.br}}
\address[P. K. Mishra]{\newline\indent Department of Mathematics
\newline\indent
Federal University of Para\'{\i}ba
\newline\indent
58051-900, Jo\~ao Pessoa-PB, Brazil}
\email{\href{mailto:pawanmishra31284@gmail.com}{pawanmishra31284@gmail.com}}
\thanks{Corresponding author: J.~M.~do~\'{O}. \\
Research supported in part by INCTmat/MCT/Brazil, CNPq and CAPES/Brazil.}
\subjclass{35B25, 35B33, 35J61}
\keywords{Nonlinear Schr\"{o}dinger equations, critical growth, Trudinger-Moser inequalities, lack of compactness, saddle potential}
\begin{document}
\maketitle
\begin{abstract}
In this paper, we deal with the following nonlinear Schr\"odinger equation

$$
-\epsilon^2\Delta u+V(x)u=f(u),\ u\in H^1(\mathbb R^2),
$$
where $f(t)$ has critical growth of Trudinger-Moser type.
By using the variational techniques, we construct a positive solution $u_\epsilon$ concentrating around the saddle points of the potential $V(x)$ as $\epsilon\rightarrow 0$.
Our results complete the analysis made in \cite{MR2900480} and \cite{MR3426106}, where the Schr\"odinger equation was studied in $\mathbb R^N$, $N\geq 3$ for sub-critical and critical case respectively in the sense of Sobolev embedding. Moreover, we relax the  monotonicity condition on the nonlinear term $f(t)/t$ together with a compactness assumption on the potential $V(x)$, imposed in  \cite{MR3503193}.
\end{abstract}


\section{Introduction}

In the last decades, considerable attention has been paid to the standing wave solutions of the following nonlinear Schr\"{o}dinger equation

\begin{equation}\label{P}
-\e^2\Delta v+V(x)v=f(v),\ \ v\in H^1(\R^N), N\geq 3.
\end{equation}
 An interesting class of solutions of
\eqref{P} are families of solutions which develop a spike shape
around some certain point in $\R^N$ as $\e\rightarrow 0$.
 Without giving an exhaustive list of references, we cite
\cite{MR2317788,MR2168823,MR2574884,MR1379196,MR1736974,MR2334939,MR1342381,MR1219814,MR1218300}.

\vskip0.1in

Recall that based on a Lyapunov-Schmidt reduction, for $N=1$ and $f(t)=t^3$ A.~Floer and A.~Weinstein \cite{MR867665} first  constructed a single peak solution of \eqref{P} which concentrates around any
given non-degenerate critical point of $V(x)$. For
$f(t)=|t|^{p-2}t, \; p\in (2,2^{\ast})$, Y.~G.~Oh \cite{MR970154} extended the result in \cite{MR867665} to the higher dimension case. In \cite{MR867665,MR970154}, their arguments requires essentially a non-degeneracy condition.

A first attempt to generalize the results of \cite{MR867665,MR970154}, without non degeneracy condition,
 was made in \cite{MR1162728} see also \cite{MR1391524,MR1379196}.
 In
\cite{MR1162728}, by using purely variational approach,
P.~H.~Rabinowitz proved the existence of positive solutions of \eqref{P}
for small $\e>0$ whenever
$$
\liminf_{|x|\rightarrow\infty}V(x)>\inf_{\RN}V(x).
$$
Subsequently these results were
improved by M. del Pino and P. Felmer in \cite{MR1471107,MR1931757} using a variational approach applied to a truncated
problem and assuming Ambrosetti-Rabinowitz((A-R) for short) and monotonicity condition on the nonlinearity.
 It is natural to ask whether this result holds for more general nonlinear term $f(t)$,
particularly, without $(A$-$R)$ or the monotonicity condition on $f(t)/t$. In 2007, in \cite{MR2317788}, J.~Byeon and J.~Jeanjean responded with an affirmative answer for $N\ge3$. Precisely,
with the Berestycki-Lions conditions (cf. (1.1)-(1.3) in \cite{MR695535} or $(f1)$-$(f3)$ in \cite{MR2317788}),  authors in \cite{MR2317788} developed a new variational method to construct
the spike solutions of \eqref{P}, which concentrate around the local minimum of $V$.  In these references only the case of the local minima of $V(x)$ was considered.
\subsection{Assumptions and Related results}
Recently in \cite{MR2900480}, P.~d'Avenia et. al. have studied the existence of spike solutions around saddle or maximum points of $V(x)$ without
assuming the monotonicity on the nonlinearity $f(t)/t$ and with the potential satisfying the following assumptions
\begin{enumerate}

\item [{(V0)}] $0<\alpha_1 \leq V(x) \leq \alpha_2$, for all $x \in \R^N$;

\end{enumerate}
Moreover, with respect to the critical point $0$, it was assumed that the potential $V(x)$ satisfies
one of the following conditions:

\begin{enumerate}

\item [({V1})] $V(0)=1$, $V$ is $C^1$ in a neighborhood of $0$
and $0$ is an isolated local maximum point of $V$.

\item [({V2})] $V(0)=1$, $V$ is $C^2$ in a neighborhood of $0$
and $0$ is a non-degenerate saddle critical point of $V$.

\item [({V3})] $V(0)=1$, $V$ is $C^{N-1}$ in a neighborhood of
$0$, $0$ is an isolated critical point of $V(x)$ and there exists
a vector space $E$ such that:

\begin{enumerate}

\item $V|_E$ has a local maximum at $0$;

\item $V|_{E^\perp}$ has a local minimum at $0$.

\end{enumerate}

\end{enumerate}
In \cite{MR2900480}, for sub-critical nonlinearity, following the
approach as in \cite{MR1931757}, the authors have defined a modified energy functional and
constructed a different mini-max argument which involves suitable deformations of certain cones in
$H^1(\R^N)$. Further this result for critical nonlinearity was studied in \cite{MR3426106}.
Before stating our main result, we shall introduce the main hypotheses on $f(t)$. In what follows, we assume that $f\in
C(\R,\R)$ and satisfies
\begin{itemize}
\item[$({f_1})$] $\lim_{t\rightarrow 0}{f(t)}/{t}=0$.
\item[$({f_2})$] $\lim_{s\rightarrow+\infty}\frac{f(s)}{e^{\alpha s^2}-1}=\left\{
\begin{array}{ll}
0,\ \ \ \ \ \ \forall \alpha>4\pi,\\
+\infty,\ \ \ \forall \alpha<4\pi.
\end{array}
\right.
$
\item[$({f_3})$]   There exists $\mu>2$ such that $tf(t)\geq \mu F(t)>0$, where  $F(t):=\int_0^tf(\tau) \, \mathrm{d} \tau$.
    \item [$({f_4})$] There exist  $p>2$, $\lambda_p>0$ such that $f(t)\ge\lambda_p t^{p-1}$ for all $t> 0$.

   \end{itemize}

\subsection{Motivation and Main result}
In a very interesting work \cite{MR3082247}, under the Berestycki-Lions conditions, J.~Byeon and K.~Tanaka improved the result in \cite{MR2317788} and obtained the existence of positive solutions to \eqref{P}, which concentrate around more general critical points (such as local maximum points and special saddle points) of $V(x)$. See also \cite{MR2900480} for related results. Further, J.~Zhang, Z.~Chen and W.~Zou \cite{MR3291802} extended the result in \cite{MR2317788} to the critical case and general nonlinear term $f(t)$.

In 2008, J. Byeon, L. Jeanjean and K. Tanaka \cite{MR2424391} considered the
concentration phenomenon of the problem  around the local minima of the potential in the cases: $N=1,2$.
In particular, for $N=2$ they assume that $f\in C(\R^+, \R^+)$ and satisfies the subcritical growth.
In \cite{MR1846738}, J. M. do \'{O} and M. A. S. Souto proved the existence of one spike solution around a local minima of $V(x)$, where the nonlinear term $f(t)$ has critical growth of Trudinger-Moser type at $\infty$, i.e., $f(t)$ behaves like $\exp(\alpha_0 t^2)$ for some $\alpha_0>0$ as $t\rightarrow+\infty$. In \cite{MR1846738} $(A$-$R)$, the monotonicity and other conditions on $f(t)$ were also required.\\

Motivated by \cite{MR1846738, MR2900480}, we consider the two-dimensional case of problem \eqref{P}. To be more precise, we study the concentration phenomenon of the following problem around saddle points of the potential $V(x)$,
\begin{equation}\label{PE}
-\e^2\Delta v+V(x)v=f(v),\ \ v\in H^1(\R^2),
\tag{$P_\varepsilon $}
\end{equation}
where $f(t)$ has the maximal growth on $t$ which allows us to treat this problem
variationally in $H^1(\R^2)$ motivated by the Trudinger--Moser type inequality due to D.~Cao \cite{MR1163431} (see also \cite{MR1704875}). This result can be compared with \cite{MR2900480, MR1846738} as follows. In \cite{MR2900480} the problem was studied with subcritical nonlinearity of Sobolev type and the concentration behavior around the saddle points of the potential $V(x)$. On the other hand, in \cite{MR1846738} the problem was considered with the maximal growth on nonlinearity of Trudinger-Moser type, but the concentration behavior was investigated around the local minima of the potential.

Let $\mathcal{C}_p$ be denoted by the best constant in the Sobolev embedding of $H^1(\mathbb{R}^2)$ into $ L^{p}(\mathbb{R}^2)$, i.e.,
$$
\mathcal{C}_p\left(\int_{\mathbb{R}^2}|u|^p\,\ud x \right)^{{2}/{p}}\le\int_{\mathbb{R}^2}\left(|\nabla u|^2+u^2\right)\ud x,\quad \forall u\in H^1(\mathbb{R}^2).
$$
The main theorem of this paper reads as
\begin{theorem} \label{thm1} Assume that $f$ satisfies hypotheses $(f1)$-$(f4)$ with
$$\lambda_p>\left(\frac{p-2}{p}\cdot\frac{2\mu}{\mu-2}\right)^{\frac{p-2}{2}}\mathcal{C}_p^{\frac{p}{2}},$$
and  $V(x)$ satisfies $(V0)$ and one of $(V1)$, $(V2)$ or $(V3)$. Then there exists $\e_0>0$ such that
 \eqref{PE} admits a positive solution $u_{\e}$ for $\e \in (0,
\e_0)$. Moreover, there exists $\{y_{\e}\} \subset \R^2$ such that
$\e y_{\e} \to 0$ and $u_{\e}(\e ( \cdot + y_{\e}))  $ converges to a ground state solution of
\begin{equation}\lab{wolin}
 - \Delta u + u = f(u),\;\; u>0,\;\; u\in H^1(\R^2).
\end{equation}
\end{theorem}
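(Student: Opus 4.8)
The plan is to combine the penalization scheme of del Pino--Felmer --- in the form adapted to saddle-type critical points in \cite{MR2900480} --- with the Trudinger--Moser compactness analysis used in \cite{MR1846738} for the critical exponential nonlinearity, together with the Byeon--Jeanjean deformation technique, which is what permits dropping the monotonicity of $f(t)/t$. After the substitution $u(x)=v(x/\e)$, problem \eqref{PE} reads $-\Delta v+V(\e x)v=f(v)$, with functional
\[
I_\e(v)=\frac12\int_{\R^2}\bigl(|\nabla v|^2+V(\e x)v^2\bigr)\,\ud x-\int_{\R^2}F(v)\,\ud x.
\]
First I would fix a bounded neighbourhood $\Lambda$ of the saddle point $0$ and, outside $\Lambda/\e$, replace $f$ by a truncated nonlinearity $\tilde f$ with $\tilde f(t)\le(\alpha_1/k)\,t$ there for a suitable fixed $k$; the modified functional $\tilde I_\e$ then satisfies the Palais--Smale condition at the relevant levels, and every critical point of $\tilde I_\e$ that is small enough outside $\Lambda/\e$ is a genuine solution of \eqref{PE}.

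Next I would analyse the limit problem \eqref{wolin}. Its mountain--pass value $m>0$ is the candidate limiting energy, and the crucial fact is that $m$ lies strictly below the Trudinger--Moser compactness threshold. Indeed, $(f_3)$ forces every Palais--Smale sequence at level $c$ of the limit functional to satisfy $\|\cdot\|_{H^1}^2\le\frac{2\mu}{\mu-2}c+o(1)$, so by Cao's inequality \cite{MR1163431} such sequences are relatively compact modulo translations below an explicit threshold; and testing the mountain--pass characterisation of $m$ against the path optimal for $-\Delta u+u=\lambda_p u^{p-1}$ gives, via $(f_4)$,
\[
m\le\frac{p-2}{2p}\,\mathcal{C}_p^{\,p/(p-2)}\,\lambda_p^{-2/(p-2)},
\]
which the hypothesis $\lambda_p>\bigl(\tfrac{p-2}{p}\cdot\tfrac{2\mu}{\mu-2}\bigr)^{(p-2)/2}\mathcal{C}_p^{p/2}$ makes small enough to keep $m$ --- and, after the estimates below, the min--max levels $c_\e$ for small $\e$ --- below that threshold. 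Consequently $m$ is attained at a ground state $w$ of \eqref{wolin}, which by $(f_4)$ and elliptic regularity is positive, radial about a point and exponentially decaying, and the set of ground states is compact up to translations.

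The heart of the argument is the min--max construction that detects a critical point of $\tilde I_\e$ concentrating at the saddle. Using the splitting $E\oplus E^\perp$ from $(V1)$/$(V2)$/$(V3)$ --- with $V|_E$ having a local maximum and $V|_{E^\perp}$ a local minimum at $0$ --- I would follow \cite{MR2900480} and set up a linking-type min--max for $\tilde I_\e$: one takes cut-off, rescaled copies $u_{\e,\tau}$ of $w$ centred at $\tau/\e$ as comparison functions, maximises over the translations $\tau\in\bar\Lambda\cap E$ (the ``maximum'' directions) and over a mountain--pass path parameter, and takes the infimum over a class of admissible deformations in the complementary directions. Since $V(0)=1$ matches the coefficient in \eqref{wolin}, one computes $\tilde I_\e(u_{\e,\tau})=m+\tfrac12(V(\tau)-1)\|w\|_{L^2}^2+o(1)$, and a careful choice and truncation of the comparison functions yields the two-sided bound $c_\e=m+o(1)$ as $\e\to0$; in particular $c_\e$ stays below the Trudinger--Moser threshold for $\e$ small, so a deformation argument of Byeon--Jeanjean type (necessary because $f(t)/t$ is not monotone) produces a nontrivial critical point $v_\e$ at level $c_\e$. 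A blow-up analysis then extracts $y_\e\in\R^2$ with $v_\e(\cdot+y_\e)\to\bar w$ strongly in $H^1(\R^2)$ for some ground state $\bar w$ of \eqref{wolin}; the energy identity forces $V(\e y_\e)\to1$, and the sharp bound $c_\e\le m+o(1)$ together with the linking over $E$ rules out concentration anywhere other than $0$, so $\e y_\e\to0$. Finally, comparison with a solution of $-\Delta\phi+\tfrac{\alpha_1}{2}\phi=0$ gives exponential decay of $v_\e$ away from $\Lambda/\e$; hence the penalization is inactive, $u_\e(x)=v_\e(x/\e)$ solves \eqref{PE} for small $\e$, and undoing the scaling gives the stated convergence of $u_\e(\e(\cdot+y_\e))$.

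The main obstacle will be making the cone-deformation min--max of \cite{MR2900480} coexist with the loss of compactness forced by the critical exponential growth: both the deformation lemma and the linking inequality need energy bounds that are uniform in $\e$ and the $\e$-uniform validity of the compactness threshold, and one must control $\int_{\R^2}(e^{\alpha v^2}-1)\,\ud x$ along the deformations --- delicate near the threshold level --- while the only-continuous, non-monotone $f$ rules out the Nehari-manifold shortcuts and forces the Byeon--Jeanjean machinery. A secondary, more technical point is the sharpness of $c_\e\le m+o(1)$: the comparison functions must be cut off carefully enough that neither the terms coming from $V(\e x)-1$ nor the critical tail of $F$ push $c_\e$ above the threshold.
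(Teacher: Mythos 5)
Your proposal is correct and follows essentially the same route as the paper: del Pino--Felmer-type truncation outside a neighbourhood of the saddle, the Trudinger--Moser compactness threshold $\tfrac12(\tfrac12-\tfrac1\mu)$ kept above the limit level $m$ via the lower bound $(f_4)$ on $\lambda_p$, the cone/linking min--max of d'Avenia--Pomponio--Ruiz over translates in $E$ of the optimal mountain-pass path, the two-sided estimate $m_\e\to m$ (the lower bound being obtained, as you indicate, by a barycenter-constrained level and a concentration/splitting analysis adapted to exponential growth), and finally uniform $L^\infty$ decay outside the truncation region to recover the original equation. The only substantive difference is one of detail, not of method: the paper's lower bound proceeds through an explicit multi-bump decomposition (Lions-type and splitting lemmas for critical exponential growth) rather than the brief "blow-up analysis" you sketch, but the underlying mechanism is the one you describe.
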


We mention that the nonlinear elliptic problems involving critical growth of Trudinger-Moser type have been studied by many authors; see, for example, Adimurthi \cite{MR1079983}, D.~Cao \cite{MR1163431}, de Figueiredo et al. \cite{MR2772124}, J.~M.~do~\'{O} et al. \cite{MR1846738, MR1704875} and N.~Lam, G.~Lu \cite{MR3145918}.

The rest of the paper is organized as follows. In Section 2, we give some preliminary results and the variational setting. In Section 3, we define a truncation of the problem which will be used throughout the present paper. Moreover, a compactness result is established. In Section 4, we introduce some results related to the corresponding limit problem, most of which are well-known. The min-max argument is exposed in Section 5, where we show the main estimates of the min-max level $m_\e$ associated to the truncation problem. These estimates play a crucial role in proving the existence of solutions. In Section 6, a key asymptotic estimate on $m_\e$ is proved. Finally, in Section 7, we show that the solutions of the truncated problem actually solve the original problem for $\e$ small.

\section{Preliminaries and variational setup}
In this section we give some preliminary definitions and
results which will be used in our subsequent arguments. We will follow the following notations. For any $R>0$, $B(x,R)$ denotes
the ball centered at $x$ and with radius $R>0$ in $\mathbb R^2$. Moreover, for any $\e>0$, let
$$ \Lambda^\e := \e^{-1} \Lambda = \left\{ x\in\RT\;\vline\; \e x \in
\Lambda \right\}. $$
Using the change of variable $x \mapsto \e x$,
problem \eqref{PE} is transformed as
\begin{equation}
\label{trunp} - \Delta u + V(\e x) u = f(u) \qquad \hbox{in } \RT.
\end{equation}
The energy functional $I_{\e}:H^1(\RT) \to \R$, corresponding to problem \eqref{trunp} is given by
\[
I_\e (u) = \frac{1}{2}\int_{\RT} (|\nabla u|^2+V(\e x)u^2)\,\ud x - \int_{\RT} F(u)\,\ud x
\]
 and defined on the Hilbert space $H^1(\mathbb R^2)$ with the inner product and norm given by

 \[
 \langle u,v \rangle =\displaystyle \int_{\mathbb R^2}\left(\nabla u\nabla v+uv\right)\,\ud x,\,\,\,\|u\|^2=\langle u,v \rangle=\displaystyle \int_{\mathbb R^2}\left(|\nabla u|^2+u^2\right)\,\ud x.
 \]
 Using the following Moser-Trudinger inequality (see \cite{MR1704875}), it is standard to verify that the functional $I_\e$ is well defined and $C^1$ with the Fr\'echet derivative given by
 \[
 \langle I^\prime_\e (u), \phi\rangle  = \int_{\RT} (\nabla u\nabla \phi + V(\e x)u\phi)\,\ud x - \int_{\RT} f(u)\phi\,\ud x,\,\mbox{for}\,\,\phi\in C_0^\iy(\R^2).
 \]
\begin{lemma}\label{MTI}
If $u\in H^1(\mathbb R^2)$ and $\alpha>0$, then
\[
\displaystyle \int_{\mathbb R^2}\left(e^{\alpha u^2}-1\right)\,\ud x<\infty.
\]
Moreover, if $\|\nabla u\|_{L^2}\leq 1$, $\|u\|_{L^2}\leq M$ and $\alpha<4\pi$, then there exists a constant $C$ which depends only on $\alpha$ and $M$, such that
\[
\displaystyle \int_{\mathbb R^2}\left(e^{\alpha u^2}-1\right)\,\ud x\le C.
\]
\end{lemma}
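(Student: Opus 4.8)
\noindent\emph{Proof plan.} The plan is to reduce both assertions to the classical Moser inequality on a bounded domain, which we take for granted in the form: there is an absolute constant $c_0$ with $\int_{\om}(e^{4\pi v^2}-1)\,\ud x\le c_0|\om|$ for every bounded $\om\subset\R^2$ and every $v\in H^1_0(\om)$ with $\|\nabla v\|_{L^2}\le1$; together with Trudinger's observation that for a \emph{fixed} $v\in H^1_0(\om)$ one has $\int_{\om}e^{\beta v^2}\,\ud x<\infty$ for \emph{every} $\beta>0$. The single device that carries everything over to $\R^2$ is to split at level $1$, so that the "large" part lives on a set of finite measure, and then to symmetrize.

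First I would write $\R^2=\{|u|\le1\}\cup\om$ with $\om:=\{|u|>1\}$. On the sublevel set, the elementary (concavity) inequality $e^{\alpha t}-1\le(e^\alpha-1)t$ for $t\in[0,1]$, applied with $t=u^2$, yields $\int_{\{|u|\le1\}}(e^{\alpha u^2}-1)\,\ud x\le(e^\alpha-1)\|u\|_{L^2}^2$, which is finite and, under the hypotheses of the second statement, at most $(e^\alpha-1)M^2$. On $\om$ note that $|\om|\le\|u\|_{L^2}^2$ (resp.\ $\le M^2$) because $|u|>1$ there. Introduce the truncation $w:=(|u|-1)^+\in H^1(\R^2)$: it is nonnegative, vanishes a.e.\ outside $\om$, satisfies $\|\nabla w\|_{L^2}\le\|\nabla u\|_{L^2}$ (using $|\nabla|u||=|\nabla u|$ a.e.), and on $\om$ one has $|u|=w+1$, hence by Young's inequality $u^2=(w+1)^2\le(1+\eta)w^2+(1+\eta^{-1})$ for every $\eta>0$. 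Consequently
\[
\int_{\om} e^{\alpha u^2}\,\ud x\le e^{\alpha(1+\eta^{-1})}\int_{\om} e^{\alpha(1+\eta)w^2}\,\ud x .
\]

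Next I would pass to the Schwarz symmetrization $w^*$ of $w$: it is supported in the ball $B_\rho$ with $\pi\rho^2=|\{w>0\}|\le|\om|<\infty$, it belongs to $H^1_0(B_\rho)$, it satisfies $\|\nabla w^*\|_{L^2}\le\|\nabla w\|_{L^2}$ by the P\'olya--Szeg\H{o} inequality, and by equimeasurability $\int_{\om}(e^{\beta w^2}-1)\,\ud x=\int_{B_\rho}(e^{\beta(w^*)^2}-1)\,\ud x$ for every $\beta>0$. For the first assertion (arbitrary $\alpha>0$, arbitrary $u$; the case $\nabla u\equiv 0$ forces $u\equiv 0$ and is trivial, so assume $\|\nabla u\|_{L^2}>0$), rescaling $w^*$ by $\|\nabla w\|_{L^2}$ turns $\int_{B_\rho}(e^{\alpha(1+\eta)(w^*)^2}-1)\,\ud x$ into $\int_{B_\rho}(e^{\gamma v^2}-1)\,\ud x$ for a \emph{fixed} $v\in H^1_0(B_\rho)$ and a finite exponent $\gamma$, which is finite by Trudinger's observation; hence $\int_{\R^2}(e^{\alpha u^2}-1)\,\ud x<\infty$. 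For the second assertion, $\|\nabla u\|_{L^2}\le1$ gives $\|\nabla w^*\|_{L^2}\le1$; choosing $\eta$ so small that $\beta:=\alpha(1+\eta)<4\pi$ (possible since $\alpha<4\pi$) and using $e^{\beta s}\le e^{4\pi s}$ for $s\ge0$, Moser's inequality gives $\int_{B_\rho}(e^{\beta(w^*)^2}-1)\,\ud x\le c_0|B_\rho|\le c_0M^2$, so that $\int_{\om}e^{\alpha u^2}\,\ud x\le e^{\alpha(1+\eta^{-1})}(c_0+1)M^2$. Adding the two regions produces the uniform bound with $C=C(\alpha,M)$.

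The conceptual obstacle is exactly the noncompactness of $\R^2$: Moser's sharp bound is available only on bounded domains with zero boundary data, and the level-$1$ splitting is what replaces the unbounded domain by the finite-measure set $\{|u|>1\}$, after which symmetrization reduces to a ball. The remaining points are routine bookkeeping --- that $w\in H^1(\R^2)$ with controlled gradient and support in $\overline{\om}$, and that an $H^1(\R^2)$ function supported in a closed ball lies in $H^1_0$ of that ball so Moser's inequality applies. It is also worth recording that one cannot expect a uniform bound for $\alpha\ge4\pi$, in agreement with the hypothesis $\alpha<4\pi$ in the second part.
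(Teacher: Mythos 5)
The paper does not prove this lemma at all --- it is quoted from the literature (the citation is to do \'O \cite{MR1704875}; see also Cao \cite{MR1163431}) --- so there is nothing in the text to compare against line by line. Your argument is correct and is, in substance, the standard proof given in those references: split at the level set $\{|u|>1\}$, handle the sublevel set by the chord bound $e^{\alpha t}-1\le (e^{\alpha}-1)t$ on $[0,1]$ (a consequence of convexity, not concavity, of $t\mapsto e^{\alpha t}$ --- a harmless slip), and reduce the superlevel set, which has measure at most $\|u\|_{L^2}^2$, to Moser's inequality on a ball via the truncation $(|u|-1)^+$, Young's inequality with a parameter $\eta=\eta(\alpha)$, and Schwarz symmetrization with P\'olya--Szeg\H{o}. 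The bookkeeping you flag as routine (that $(|u|-1)^+\in H^1$ with gradient controlled by $\nabla u$, and that the rearranged function lies in $H^1_0$ of the ball) is indeed standard, and the dependence of the final constant only on $\alpha$ and $M$ is transparent from your choice of $\eta$.
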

Throughout this paper, we denote standard norm in  $H^1(\RT)$ by $\| \cdot \|$. The strong and weak convergence of sequences of functions are denoted by $\rightarrow$ and $\rightharpoonup$ respectively in the space $H^1(\RT)$.

\section{Compactness result for truncated problem}
We will not study problem \eqref{trunp} in its original form. First, we will make a suitable truncation of the
nonlinearity $f(t)$. Then we will find a solution of the truncated
problem. At the end, under a suitable control over the solution of the truncated problem, we come back to the solution of the original problem. For that, let us define
\[
\tilde{f}(t)= \left\{
\begin{array}{lll}
\min\{f(t),at\}, & & t\geq 0,\\
0, & &t<0,
\end{array}
\right.
\]
with
\begin{equation*}
0<a<\left(1-\frac{2}{\mu}\right)\al_1,
\end{equation*}
where $\alpha_1$ is introduced in $(V0)$.
In the following we consider the balls $B_i:=B(0,R_i)\subset\RT$ ($i=0,1,2,\ldots,5$) with $R_i <R_{i+1}$ for
$i=0,1,2,3, 4$, where $R_i$ are small positive constants and will be determined later. For some technical reasons, we choose $R_1$ satisfying
\begin{equation*}  \forall x \in \partial
B_1 \mbox{ with } V(x)=1, \partial_{\tau}V(x) \neq 0, \mbox{ where
} \tau \mbox{ is tangent to } \partial B_1 \mbox{ at }x.
\end{equation*}

Next we define $\chi : \R^2 \to \R$,
\begin{equation*}
\chi(x)= \left \{
\begin{array}{ll}
 1, & x \in B_1, \\
 \frac{R_2 - |x|}{R_2-R_1}, & x \in B_2 \setminus B_1, \\
 0, & x \in B_2^c
\end{array}
\right.
\end{equation*}
and then
\begin{align*}
g(x,t)=&\chi(x) f(t) + \left( 1- \chi(x)\right) \tilde{f}(t),\\
G (x,t):=&\int_0^t g(x,\tau)\,\ud\tau = \chi(x) F(t) + \left( 1-
\chi(x)\right) \tilde{F}(t),
\end{align*}
where $F(t)$ and $\tilde {F}(t)$ are primitives of $f(t)$ and $\tilde{f}(t)$ respectively.
We also denote
\[\chi_\e(x)=\chi(\e x) \] and
\[
g_\e (x,t):= g(\e x, t)= \chi_\e(x) f(t) + \left( 1-
\chi_\e(x)\right) \tilde{f}(t).
\]
Therefore, the truncated problem related to problem \eqref{trunp} looks like the following
\begin{equation}\label{truneq}
 - \Delta u + V(\e x) u = g_{\e}(x,u) \qquad
\hbox{in } \RT.
\tag{$\tilde {P}_\varepsilon $}
\end{equation}
The solutions of \eqref{truneq} are the  critical points of the associated energy functional
$\tilde{I}_{\e}:H^1(\RT) \to \R$, defined as
\[
\tilde{I}_\e (u) = \frac{1}{2}\int_{\RT} (|\nabla u|^2 + V(\e x) u^2)\,\ud x - \int_{\RT} G_\e(x,u)\,\ud x,
\]
where
\[
G_\e (x,t):=\int_0^t g_\e (x,\tau)\,\ud \tau = \chi_\e(x) F(t) +
\left( 1- \chi_\e(x)\right) \tilde{F}(t).
\]
We now state the following compactness result.

\begin{proposition}
\label{PSC} For every $\e>0$, the functional $\tilde{I}_\e$
satisfies the Palais-Smale condition , $(PS)_c$ in short, for all $c<\frac{1}{2}\left(\frac{1}{2}-\frac{1}{\mu}\right)$, where $\mu$ is introduced in $(f3)$.
\end{proposition}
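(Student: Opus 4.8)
The plan is to run the standard three-step analysis of a Palais--Smale sequence for a functional with critical exponential nonlinearity, using the truncation of $f$ to control the tail at infinity and the level restriction $c<\frac12\big(\frac12-\frac1\mu\big)$ to keep the sequence strictly below the Moser--Trudinger threshold. Write $\|u\|_\e^2:=\int_{\RT}\big(|\na u|^2+V(\e x)u^2\big)\,\ud x$, which is equivalent to $\|\cdot\|$ by $(V0)$, and let $\{u_n\}\subset H^1(\RT)$ satisfy $\tilde I_\e(u_n)\to c$ and $\tilde I_\e'(u_n)\to0$. For boundedness I would estimate $\tilde I_\e(u_n)-\frac1\mu\langle\tilde I_\e'(u_n),u_n\rangle$: where $\chi_\e=1$ the nonlinear integrand $\frac1\mu f(u_n)u_n-F(u_n)$ is nonnegative by $(f_3)$, and where $\chi_\e<1$ the bound $0\le\tilde f(t)\le at$ ($t\ge0$) gives $\tilde F(t)\le\frac a2t^2$, hence $\frac1\mu\tilde f(u_n)u_n-\tilde F(u_n)\ge-\frac a2u_n^2$. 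With $(V0)$ this yields
$$c+o(1)\|u_n\|\ \ge\ \Big(\tfrac12-\tfrac1\mu-\tfrac a{2\al_1}\Big)\|u_n\|_\e^2,$$
with positive coefficient since $a<\big(1-\tfrac2\mu\big)\al_1$; so $\{u_n\}$ is bounded and, up to a subsequence, $u_n\rhu u$ in $H^1(\RT)$, $u_n\to u$ in $L^q_{\loc}(\RT)$ for every $q\in[1,\iy)$ and a.e. A careful version of the same estimate, together with $c<\frac12\big(\frac12-\frac1\mu\big)$, moreover gives $\limsup_n\|\na u_n\|_{L^2}^2<1$; this is the quantitative input that makes Lemma~\ref{MTI} applicable to $f(u_n)$ below the critical threshold.

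Next I would prove tightness at infinity. Fix $R$ so large that $\e^{-1}B_2\subset B(0,R)$, whence $g_\e(x,t)=\tilde f(t)$ for $x\notin B(0,R)$, and take $\psi_R\in C^\iy(\RT)$ with $\psi_R\equiv0$ on $B(0,R)$, $\psi_R\equiv1$ off $B(0,2R)$, $|\na\psi_R|\le C/R$. Testing $\tilde I_\e'(u_n)$ against $\psi_R u_n$, and using $0\le\tilde f(t)t\le at^2$ with $a<\al_1$ together with $\big|\int_{\RT}u_n\na u_n\cdot\na\psi_R\,\ud x\big|\le C/R$, one obtains
$$\int_{\RT\setminus B(0,2R)}\big(|\na u_n|^2+u_n^2\big)\,\ud x\ \le\ \frac CR+o(1),$$
so no mass escapes to infinity; combined with the local strong convergence this gives $u_n\to u$ in $L^q(\RT)$ for all $q\in[2,\iy)$.

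For the strong convergence in $H^1(\RT)$, from $\tilde I_\e'(u_n)\to0$, the boundedness of $\{u_n\}$, the fact that $\int_{\RT}\big(\na u\cdot\na(u_n-u)+V(\e x)u(u_n-u)\big)\,\ud x\to0$, and $\int_{\RT}g_\e(x,u)(u_n-u)\,\ud x\to0$ (since $g_\e(\cdot,u)\in L^2(\RT)$ by Lemma~\ref{MTI} and $u_n-u\rhu0$), one gets
$$\|u_n-u\|_\e^2=\int_{\RT}g_\e(x,u_n)(u_n-u)\,\ud x+o(1).$$
Splitting the integral over $B(0,\rho)$ and its complement with $\rho$ large: off $B(0,\rho)$ we have $g_\e(x,u_n)=\tilde f(u_n)$ with $|\tilde f(u_n)|\le a|u_n|$, so that piece is $\le a\|u_n\|_{L^2(\RT\setminus B(0,\rho))}\|u_n-u\|_{L^2}$, which is small by the tightness step; on $B(0,\rho)$, $(f_2)$, the bound $\limsup_n\|\na u_n\|_{L^2}^2<1$ and Lemma~\ref{MTI} give that $\{g_\e(\cdot,u_n)\}$ is bounded in $L^s(B(0,\rho))$ for some $s>1$, and then $u_n\to u$ in $L^{s'}(B(0,\rho))$ with H\"older's inequality kills this piece. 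Hence $\|u_n-u\|_\e\to0$, i.e.\ $u_n\to u$ in $H^1(\RT)$.

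The main obstacle is the treatment of the critical term $f(u_n)$ on bounded sets in the last step: one must know that $\|\na u_n\|_{L^2}^2$ stays strictly below $1$, since otherwise the gradient could concentrate and Lemma~\ref{MTI} would no longer bound $f(u_n)$ in any $L^s_{\loc}$ with $s>1$. This is exactly what the restriction $c<\frac12\big(\frac12-\frac1\mu\big)$ is designed to guarantee, through the boundedness estimate of the first step; allowing larger levels would require subtracting off the weak limit and a Brezis--Lieb/concentration-compactness refinement, which is unnecessary here.
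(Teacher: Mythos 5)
Your proposal is correct and follows essentially the same strategy as the paper: boundedness from $(f_3)$ together with the truncation constant $a<(1-\tfrac{2}{\mu})\alpha_1$, the quantitative bound $\limsup_n\|\nabla u_n\|_{L^2}^2<\tfrac12$ forced by the level restriction $c<\tfrac12(\tfrac12-\tfrac1\mu)$, a cut-off argument for tightness at infinity, and Moser--Trudinger plus H\"older to handle the critical term on bounded sets. The only (harmless) difference is in the endgame, where you estimate $\int g_\e(x,u_n)(u_n-u)\,\ud x$ directly instead of proving $\int g_\e(x,u_n)u_n\to\int g_\e(x,u)u$ via Strauss's compactness lemma and the de Figueiredo--Miyagaki--Ruf convergence lemma as the paper does.
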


\begin{proof}
\textbf{Step 1:} The sequence $\{u_n\}$ is bounded.\\
Since $\{u_n\}$ is a $(PS)_c$ sequence for $\tilde{I}_\e$,
\[
\tilde{I}_\e (u_n)=\frac{1}{2}\int_{\RT} (|\nabla u_n|^2+V(\e x) u_n^2)\,\ud x - \int_{\RT} G_\e(x,u_n)\,\ud x \to c
\]
and
\[
\langle \tilde{I}'_\e (u_n),u_n \rangle=\int_{\RT} (|\nabla u_n|^2 + V(\e x) u_n^2)\,\ud x
- \int_{\RT} g_\e(x,u_n) u_n\,\ud x = o(\|u_n\|).
\]
Then, by $(f3)$ we have
\begin{align*}
&\mu \tilde{I}_\e(u_n) - \langle \tilde{I}'_\e (u_n),u_n \rangle\\
& =\left( \frac{\mu}{2}-1\right) \int_{\RT} \left ( |\nabla u_n|^2  + V(\e x) u_n^2 \right )\,\ud x - \int_{\RT} \chi_\e (x) \left(\mu F(u_n) - f(u_n) u_n\right)\,\ud x\\
&\,\,\,\,\,\,\, - \int_{\RT} \left(1- \chi_\e (x)\right) \left(\mu \tilde{F}(u_n) - \tilde{f}(u_n)u_n\right)\,\ud x\\
& \geq \left( \frac{\mu}{2}-1\right) \int_{\RT} \left ( |\nabla u_n|^2  + V(\e x) u_n^2 \right )\,\ud x - \frac{\mu}{2}a  \int_{\RT\setminus B_1^\e} u_n^2\,\ud x\\
&\geqq \left[\left(\frac{\mu}{2}-1\right)\alpha_1-\frac{\mu}{2}a\right] \| u_n\|^2
\end{align*}
which implies that $\{u_n\}$ is bounded in $H^1(\R^2)$.
\vskip0.1in
\textbf{Step 2:} For any given $\delta>0$, there exists $R=R(\delta)>0$ such that
\be\lab{s}
\limsup_{n\rg\iy} \|u_n\|_{H^1(B(0,R)^c)}\le\delta.
\ee
As a sequence, there exists $\e_0>0$ such that for any $\e\in(0,\e_0)$, we have
$$
\limsup_{n\rg\iy}\|\na u_n\|_2^2<\frac{2\mu c_0}{\mu-2}<\frac{1}{2}
$$
for some $c_0\in\left(c,\frac{1}{2}\left(\frac{1}{2}-\frac{1}{\mu}\right)\right)$. In fact, by Step 1, there exists $\e_0>0$ such that for $\e\in(0,\e_0)$, $\R^2\setminus B_1^\e\subset B(0,R)$. Then $\limsup_{n\rg\iy}\int_{\RT\setminus B_1^\e} u_n^2\,\ud x\le\frac{\delta^2}{\al_1}$,
\begin{align*}
\limsup_{n\rg\iy}\int_{\RT} |\nabla u_n|^2\,\ud x&\le\limsup_{n\rg\iy}\frac{2}{\mu-2}\left[\mu \tilde{I}_\e(u_n) - \langle \tilde{I}'_\e (u_n),u_n\rangle +\frac{\mu}{2}a  \int_{\RT\setminus B_1^\e} u_n^2\,\ud x\right]\\
&\le\frac{2}{\mu-2}\left[\mu c +\frac{\mu a\delta^2}{2\al_1}\right].
\end{align*}
It follows that by choosing $\delta>0$ small, we have $
\limsup_{n\rg\iy}\|\na u_n\|_2^2<\frac{2\mu c_0}{\mu-2}<\frac{1}{2},
$
for some $c_0\in\left(c,\frac{1}{2}\left(\frac{1}{2}-\frac{1}{\mu}\right)\right)$.

In the following, we prove \re{s}. We take $R>0$ such that $B_2^\e \subset B(0,R/2)$. Let $\phi_R$
a cut-off function such that $\phi_R=0$ in $B(0,R/2)$, $\phi_R=1$
in $B(0,R)^c$, $0\le\phi_R\leq 1$ and $|\nabla \phi_R|\leq C/R$. Then
\[
\langle \tilde{I}'_\e (u_n), \phi_R u_n\rangle =\int_{\RT} [\nabla u_n \cdot \nabla (\phi_R
u_n)+ V(\e x) u_n^2 \phi_R]\,\ud x  - \int_{\RT} g_\e(x,u_n) \phi_R
u_n \,\ud x= o_n(1).
\]
Since $\{u_n\}$ is bounded, therefore
\begin{align*}
\int_{\RT} \left(|\nabla u_n|^2 + V(\e x) u_n^2\right) \phi_R\,\ud x
= &  \int_{\RT} \tilde{f}(u_n) \phi_R u_n\,\ud x - \int_{\RT} u_n \nabla u_n \cdot \nabla \phi_R\,\ud x + o_n(1)\\
\leq & \ a \int_{\RT}  u_n^2\,\ud x +\frac CR + o_n(1)
\end{align*}
which implies
$$
\| u_n \|^2_{H^1(B(0,R)^c)} \leq C/R +o_n(1).$$
Taking $R$ large, the proof of Step 2 follows.
\vskip0.1in

\textbf{Step 3:} For any $\delta$ given above and small enough, we claim that
\[
\displaystyle\limsup_{n\rightarrow\infty}\left(\int_{\mathbb R^2\setminus  B_R(0)}g_\epsilon(x, u_n)u_n\,\ud x+\int_{\mathbb R^2\setminus B_R(0)}g_\epsilon(x, u)u\,\ud x\right)\le\delta.
\]
Indeed, by $(f_1)$-$(f_2)$, for any $\e>0, \alpha>4\pi,q>1$, there exists some $C_1>0$ (independent of $\e,n$) such that
$$
|g_\e(x,t)|\le C_1|t|+|t|(e^{\alpha t^2}-1),\,\, t\in\R.
$$
Then
\begin{align*}
\int_{\mathbb R^2\setminus B_R(0)}g_\epsilon(x, u_n)u_n\,\ud x &\leq C_1 \int_{\mathbb R^2\setminus  B_R(0)} u_n^2\,\ud x+ \int_{\mathbb R^2\setminus  B_R(0)} u_n^2 (e^{\alpha u_n^2}-1)\,\ud x\\
&\leq C_1\|u_n\|^2_{H^1(B^c_R(0))}+\left(\int_{\mathbb R^2\setminus  B_R(0)}|u_n|^{2q'}\,\ud x\right)^\frac{1}{q'}\left(\int_{\mathbb R^2}(e^{q\alpha u_n^2}-1)\,\ud x\right)^\frac1q,
\end{align*}
where $q,q'>1$ and $1/q+1/q'=1$. Choosing $q>0$ (close to 1) and $\al>4\pi$ (close to $4\pi$) such that $q\alpha\|\na u_n\|_2^2<4\pi$ for $n$ large, by Step 2 and  Lemma \ref{MTI}, there exists $C>0$ (independent of $n, R$) such that for $n$ large,
$$
\int_{\mathbb R^2\setminus  B_R(0)}g_\epsilon(x, u_n)u_n\,\ud x\leq C_1 \delta^2+ C\|u_n\|^2_{L^{2q'}(B^c_R(0))}.
$$
By \cite[Theorem 4.12]{Adams}, $H^1(B^c_R(0))\hookrightarrow L^{2q'}(B^c_R(0))$ and there exists $C_{q'}>0$ (independent of $R$) such that
$$\|u\|_{L^{2q'}(B^c_R(0))}\le C_{q'}\|u\|_{H^1(B^c_R(0))},\,\,\forall u\in H^1(B^c_R(0)).$$
Thus
\be\lab{mpp1}
\int_{\mathbb R^2\setminus  B_R(0)}g_\epsilon(x, u_n)u_n\,\ud x\leq (C_1+CC_{q'}^2)\delta^2\le\delta/2,
\ee
by choosing $\delta$ small enough. On the other hand, since $g_\epsilon(x, u)u \in L^1(\mathbb R^2)$, we can choose $R>0$ large enough such that
\begin{align}\label{mpp2}
\int_{\mathbb R^2\setminus  B_R(0)}g_\epsilon(x, u)u\,\ud x\le\delta/2.
\end{align}
Combining \eqref{mpp1} and \eqref{mpp2}, the proof of Step 3 follows.
\vskip0.1in
\textbf{Step 4:} For every compact set $K\subset \mathbb R^2$
\[
\displaystyle\lim_{n\rightarrow\infty}\int_{K}g_\epsilon(x, u_n)\,\ud x=\displaystyle \int_{K}g_\epsilon(x, u)\,\ud x.
\]
Since $\{u_n\}$ is bounded in $H^1(\mathbb R^2)$. Hence $u_n\rightarrow u$ in $L^1(K)$ for any compact set $K\subset \mathbb R^2$ and $g_\epsilon(x, u_n)\in L^1(K)$. Moreover, since $\{u_n\}$ is a bounded $(PS)_c$ sequence, we have
\[
\sup_{n}\left|\ds \int_{\mathbb R^2}g_\epsilon(x, u_n)u_n\,\ud x\right|<\iy.
\]
Now the proof of the claim follows from \cite[Lemma 2.1]{MR1386960}.
\vskip0.1in
\textbf{Step 5:} For $R$ given above, we claim that
\[
\displaystyle\lim_{n\rightarrow\infty}\int_{B_R(0)}g_\epsilon(x, u_n)u_n\,\ud x=\displaystyle \int_{ B_R(0)}g_\epsilon(x, u)u\,\ud x.
\]
Since the sequence $\{u_n\}$ is bounded, up to a subsequence, $u_n\rightharpoonup u$ in $H^1(\mathbb R^2)$ and a.e. in $\mathbb R^2$ as $n\rg\iy$. Let $P(x,t)=g_\epsilon(x,t)t, Q(t)=e^{\alpha t^2}-1, t\in\R$, where $\alpha>4\pi$ with $\alpha\|\na u_n\|_2^2<4\pi$ for $n$ large. Obviously, by $(f_1)$-$(f_2)$,
$$
\lim_{t\rightarrow\infty}\frac{P(x,t)}{Q(t)}=0\,\,\,\mbox{uniformly for}\,\,\,x\in B_R(0)
$$
and
$$
\sup_n\int_{B_R(0)}Q(u_n)\,\ud x<\infty,\,\,\,P(x,u_n)\rightarrow P(x,u),\,\, a.e.\,\, x\in B_R(0).
$$
Strauss's compactness lemma yields the result desired.

\vskip0.1in
\textbf{Step 6:} We show that $u_n\rightarrow u$ in $H^1(\mathbb R^2)$ as $n\rg\iy$.\\
Since $\langle \tilde I_\epsilon(u_n), u_n\rangle\rightarrow 0$ as $n\rightarrow\iy$,
\begin{align}\label{dercon}
\ds \int_{\mathbb R^2}(|\nabla u_n|^2+V(\epsilon x)u_n^2)\,\ud x=\int_{\mathbb R^2}g_\epsilon(x, u_n) u_n\,\ud x+o_n(1).
\end{align}
By Step 4, it is easy to verify that $u$ is a critical point of $\tilde I_\epsilon$, so
\begin{align}\label{weakcon}
\ds \int_{\mathbb R^2}(|\nabla u|^2+V(\epsilon x)u^2)\,\ud x=\int_{\mathbb R^2}g_\epsilon(x, u) u\,\ud x.
\end{align}
Combing Step 3, Step 5, \eqref{dercon} and \eqref{weakcon}, we get
\[
\ds \lim_{n\rg\iy}\int_{\mathbb R^2}(|\nabla u_n|^2+V(\epsilon x)u_n^2)\,\ud x=\ds \int_{\mathbb R^2}(|\nabla u|^2+V(\epsilon x)u^2)\,\ud x.
\]
As a consequence, $u_n\rightarrow u$ in $H^1(\mathbb R^2)$ as $n\rg\iy$.
\end{proof}

\section{The limit problems}

As mentioned before, we study the following limit problem
\begin{equation}
\label{lmtp}  -\Delta u + k u = f(u)\,\,\mbox{in}\,\,\R^2
\end{equation}
for some $k >0$. The associated energy functional $\Phi_k : H^1(\RT) \to
\R$ is defined by
\begin{equation*}
\Phi_k(u)=\frac{1}{2} \int_{\RT} (|\nabla u|^2+k u^2)\,\ud x - \int_{\RT}
F(u)\,\ud x.
\end{equation*}
In \cite{MR2875652}, with the similar assumptions on $f(t)$ as in
Theorem \ref{thm1}, C. O. Alves, M. A. S. Souto and M. Montenegro proved that there exists a radially asymmetric ground state solution $U$ of \eqref{lmtp} (See also \cite{MR1846738} for similar results under more restrictive assumptions). Moreover, $U$ satisfies
\begin{equation}\label{eq}
\frac{1}{2}\int_{\mathbb R^2}|\nabla U|^2\,\ud x =\Phi_k(U)<\frac{1}{2}\ \mbox{and}\ \int_{\mathbb R^2}\left(F(U)-\frac{k}{2}U^2\right)\,\ud x=0.
\end{equation}
Moreover by \cite[Proposition 2.1]{MR3428453}, every radially asymmetric solution of \eqref{lmtp} is decreasing in $r=|x|$ and decays exponentially at infinity. Let $S_k$ be the set of nontrivial solutions of \eqref{lmtp} and $U\in H^1_{\mathrm{rad}}(\mathbb R^2)$ be a ground state solution of \eqref{lmtp}, then
\begin{equation*}
S^k_g:=\Phi_k(U)= \inf\{\Phi_k(u): u\in S_k\}.
\end{equation*}
 Now we define
\begin{equation} \label{mnmxlm}
m_k=\inf_{\gamma\in\Gamma_k}\max_{t\in[0,1]} \Phi_k(\gamma(t))
\end{equation}
with $\Gamma_k=\{ \gamma\in C([0,1],H^1_{\mathrm{rad}}(\mathbb R^2)): \ \gamma(0)=0, \Phi_k(\gamma(1))<0 \}$.
Following the proof of \cite[Corollary 1.5]{MR2875652}, $m_k=S^k_g$. Moreover, there exists $\gamma\in\Gamma_k$ such that $U\in\gamma ([0,1])$ and
\[
\max_{t\in[0,1]} \Phi_k(\gamma (t))=m_k.
\]
For $k=1$, for simplicity, let
$
 \Phi:= \Phi_1,\ m:= m_1.
$
Similar as in \cite[Lemma 2.4]{MR2900480}, we show the monotonicity of $m_k$ using the following compactness result in \cite{MR2875652}.
\begin{lemma} \label{smon}
Under the assumptions of Theorem \ref{thm1}, for any  sequence $\{v_n\}$  in $H_{\mathrm{rad}}^1(\mathbb R^2)$ such that
$
\sup_n\|\nabla v_n\|_{L^2(\mathbb R^2)}^2=\rho<1,\; \sup_n\|v_n\|_{L^2(\mathbb R^2)}^2<\infty
$
together with $v_n \rightharpoonup v$ in $H_{\mathrm{rad}}^1(\mathbb R^2)$ and a. e. in $\R^2$ as $n\rightarrow \infty$,
we have
$$
\lim_{n\rightarrow\infty}\int_{\mathbb R^2}F(v_n)\,\ud x=\int_{\R^2}F(v)\,\ud x.
$$
\end{lemma}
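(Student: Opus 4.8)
The plan is to split $\RT=B_R(0)\cup B_R(0)^c$ for a large radius $R$, to control the tail $\int_{B_R(0)^c}F(v_n)\,\ud x$ uniformly in $n$ via radial decay together with $(f_1)$, and to treat the bounded part $\int_{B_R(0)}F(v_n)\,\ud x$ by the Moser--Trudinger inequality (Lemma \ref{MTI}) combined with a Strauss-type compactness argument, exactly as in Step 5 of the proof of Proposition \ref{PSC}. First note that $\|v_n\|^2=\|\na v_n\|_{L^2(\RT)}^2+\|v_n\|_{L^2(\RT)}^2\le\rho+\sup_m\|v_m\|_{L^2(\RT)}^2=:\Lambda^2<\infty$, so $\{v_n\}$ is bounded in $H^1(\RT)$; by weak lower semicontinuity $\|v\|\le\Lambda$, $\|\na v\|_{L^2(\RT)}^2\le\rho$ and $v\in H^1_{\mathrm{rad}}(\RT)$.

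For the tail I would invoke the standard radial estimate (see, e.g., \cite{MR2875652}): there is $C_0>0$ with $|u(x)|\le C_0\|u\|\,|x|^{-1/2}$ for every radial $u\in H^1(\RT)$ and $|x|\ge1$. Hence, given $\eta>0$, pick $\delta>0$ with $|F(t)|\le\eta t^2$ for $|t|\le\delta$ (possible by $(f_1)$, since $|F(t)|\le\int_0^{|t|}|f(s)|\,\ud s\le\tfrac{\eta}{2}t^2$ near $0$), and then $R=R(\delta)\ge1$ so large that $C_0\Lambda\,|x|^{-1/2}\le\delta$ for $|x|\ge R$; this forces $|v_n(x)|\le\delta$ and $|v(x)|\le\delta$ for all $|x|\ge R$ and all $n$. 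Consequently
$$
\int_{B_R(0)^c}|F(v_n)|\,\ud x+\int_{B_R(0)^c}|F(v)|\,\ud x\le\eta\Big(\sup_m\|v_m\|_{L^2(\RT)}^2+\|v\|_{L^2(\RT)}^2\Big)=O(\eta).
$$

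For the bounded part the crucial point is to upgrade the Moser--Trudinger bound past the critical exponent $4\pi$, using that $\rho<1$ strictly. Setting $w_n:=v_n/\sqrt{\rho}$ one has $\|\na w_n\|_{L^2(\RT)}\le1$ and $\sup_n\|w_n\|_{L^2(\RT)}<\infty$; choosing $\alpha_0\in(4\pi,4\pi/\rho)$ and applying Lemma \ref{MTI} to $w_n$ with exponent $\alpha_0\rho<4\pi$ (note that $(\alpha_0\rho)\,w_n^2=\alpha_0 v_n^2$) gives a constant $M_0$ with
$$
\sup_n\int_{\RT}\big(e^{\alpha_0 v_n^2}-1\big)\,\ud x\le M_0<\infty,
$$
and the same bound holds for $v$. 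From $(f_2)$--$(f_3)$ one then checks that $F(t)\big/(e^{\alpha_0 t^2}-1)\to0$ as $|t|\to\infty$: fix $\alpha_*\in(4\pi,\alpha_0)$; by $(f_2)$, $f(t)=o(e^{\alpha_* t^2}-1)$, and integrating (using $F(t)\le\mu^{-1}tf(t)$ from $(f_3)$ for the routine reductions) yields $F(t)\le C+C\,\eta\,e^{\alpha_* t^2}$ for $t$ large, which is of lower order than $e^{\alpha_0 t^2}-1$. Therefore, with $P(t):=F(t)$, $Q(t):=e^{\alpha_0 t^2}-1$, the sequence $\{v_n\}$ satisfies the hypotheses of Strauss's compactness lemma on the bounded set $B_R(0)$ — namely $P/Q\to0$ at infinity, $\sup_n\int_{B_R(0)}Q(v_n)\,\ud x\le M_0$, and $F(v_n)\to F(v)$ a.e.\ on $B_R(0)$ — whence $\int_{B_R(0)}F(v_n)\,\ud x\to\int_{B_R(0)}F(v)\,\ud x$. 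Combining this with the tail estimate and letting $\eta\to0$ gives the conclusion.

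The main obstacle is precisely this uniform exponential integrability with an exponent $\alpha_0>4\pi$; it is the only place where the strict inequality $\rho<1$ is essential, and it is what lets the subcritical Moser--Trudinger estimate of Lemma \ref{MTI} be matched against the critical growth hypothesis $(f_2)$. As an alternative to Strauss's lemma one may conclude directly by Vitali's theorem, using the pointwise bound $|F(t)|\le\eta t^2+C_\eta(e^{\alpha_* t^2}-1)$ and the elementary inequality $e^{\alpha_* t^2}-1\le(e^{\alpha_0 t^2}-1)^{\alpha_*/\alpha_0}$ (valid since $\alpha_*<\alpha_0$) to obtain uniform integrability of $\{F(v_n)\}$ on $B_R(0)$, which together with $v_n\to v$ a.e.\ yields the claim.
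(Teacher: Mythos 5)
Your argument is correct. Note that the paper itself offers no proof of Lemma \ref{smon}: it is quoted as "the following compactness result in \cite{MR2875652}", so there is nothing internal to compare against; what you have written is essentially the standard proof of that cited result, and it holds up. The two key ingredients are exactly right: (i) the Strauss radial estimate $|u(x)|\le C_0\|u\|\,|x|^{-1/2}$ makes $v_n$ uniformly small outside a large ball, so $(f_1)$ controls the tail by $\eta\sup_n\|v_n\|_{L^2}^2$; (ii) the rescaling $w_n=v_n/\sqrt{\rho}$ converts the hypothesis $\rho<1$ into a uniform bound on $\int(e^{\alpha_0 v_n^2}-1)$ for some $\alpha_0>4\pi$, which is precisely what is needed to dominate the critical growth in $(f_2)$ on the ball, after which either Strauss's compactness lemma or your Vitali variant (the inequality $e^{\alpha_* t^2}-1\le(e^{\alpha_0 t^2}-1)^{\alpha_*/\alpha_0}$ for $\alpha_*<\alpha_0$ plus H\"older does give uniform integrability) finishes the argument. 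The only cosmetic caveat is that $(f_2)$ only constrains $f$ as $s\to+\infty$, so strictly speaking one works with $f$ extended by the paper's convention on $t\le 0$; this is the same convention used throughout the paper and does not affect the argument.
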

\begin{lemma}\label{monmk}
Under the assumptions of Theorem \ref{thm1}, we have
$$
0<m_k<\frac{1}{2}\left(\frac{1}{2}-\frac{1}{\mu}\right)
$$
and the map $m:(0,+\infty) \to (0,+\infty)$,
$
m(k) = m_k
$
is strictly increasing and continuous.
\end{lemma}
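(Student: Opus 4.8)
The plan is to establish, in order: (i) the two-sided bound $0<m_k<\frac12\left(\frac12-\frac1\mu\right)$; (ii) strict monotonicity of $k\mapsto m_k$; and (iii) continuity. For (i), the lower bound $m_k>0$ is routine: by $(f_1)$--$(f_2)$ and the Moser--Trudinger inequality (Lemma \ref{MTI}), $\Phi_k$ has a strictly positive mountain-pass geometry near $0$, so $m_k\ge\rho_k>0$. For the upper bound I would use the identification $m_k=S^k_g=\Phi_k(U_k)$ for a ground state $U_k$ together with the Pohozaev-type identities in \eqref{eq}: since $\int_{\R^2}\bigl(F(U_k)-\tfrac{k}{2}U_k^2\bigr)\,\ud x=0$, one gets $\Phi_k(U_k)=\tfrac12\int_{\R^2}|\nabla U_k|^2\,\ud x<\tfrac12$. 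To sharpen $\tfrac12$ to $\tfrac12\left(\tfrac12-\tfrac1\mu\right)$ I would bring in hypothesis $(f_4)$ and the assumed lower bound on $\lambda_p$: testing against the rescaled family $U_k(\cdot/t)$ (or directly estimating the mountain-pass level using the trial function $t\mapsto\gamma_p(t)$ built from the extremal for $\mathcal C_p$), one bounds $m_k$ above by the mountain-pass level of the subcritical functional $u\mapsto\tfrac12\|u\|_k^2-\tfrac{\lambda_p}{p}\|u\|_p^p$, which equals $\left(\tfrac12-\tfrac1p\right)\left(\tfrac{\lambda_p^{-1}\mathcal C_p^{p/2}k^{(p-2)/2}}{\,}\right)^{2/(p-2)}$ up to the precise constant; the stated inequality on $\lambda_p$ is exactly what forces this quantity to be $<\tfrac12\left(\tfrac12-\tfrac1\mu\right)$. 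This is the step I expect to require the most care, since the constant-chasing must match the hypothesis precisely.

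For strict monotonicity, suppose $0<k_1<k_2$. Take the path $\gamma\in\Gamma_{k_2}$ realizing $m_{k_2}$ with $U_{k_2}\in\gamma([0,1])$ and $\max_t\Phi_{k_2}(\gamma(t))=m_{k_2}$. Since $k_1<k_2$ we have $\Phi_{k_1}(u)<\Phi_{k_2}(u)$ for every $u\not\equiv 0$, and in particular $\Phi_{k_1}(\gamma(1))\le\Phi_{k_2}(\gamma(1))<0$, so $\gamma\in\Gamma_{k_1}$ after a harmless reparametrization; hence
\[
m_{k_1}\le\max_{t\in[0,1]}\Phi_{k_1}(\gamma(t))\le\max_{t\in[0,1]}\Phi_{k_2}(\gamma(t))=m_{k_2}.
\]
To make the inequality strict, note the maximum of $\Phi_{k_2}\circ\gamma$ is attained at a point $t^\ast$ with $\gamma(t^\ast)\not\equiv 0$ (since $\Phi_{k_2}(0)=0<m_{k_2}$); along the path $\Phi_{k_1}(\gamma(t^\ast))=\Phi_{k_2}(\gamma(t^\ast))-\tfrac{k_2-k_1}{2}\int_{\R^2}\gamma(t^\ast)^2\,\ud x<m_{k_2}$, and one checks this survives taking the max over the (at most countable set of) maximizers, giving $m_{k_1}<m_{k_2}$. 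This mirrors \cite[Lemma 2.4]{MR2900480}.

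For continuity, fix $k>0$ and let $k_n\to k$. Upper semicontinuity follows from the path argument above: a near-optimal path for $\Phi_k$ is admissible for $\Phi_{k_n}$ (after checking $\Phi_{k_n}(\gamma(1))<0$ for $n$ large, which holds since $\Phi_{k_n}(\gamma(1))\to\Phi_k(\gamma(1))<0$), and $\Phi_{k_n}\to\Phi_k$ uniformly on the compact set $\gamma([0,1])$; hence $\limsup_n m_{k_n}\le m_k$. For lower semicontinuity I would pass through ground states: let $U_n$ be a ground state for $\Phi_{k_n}$, so $m_{k_n}=\Phi_{k_n}(U_n)=\tfrac12\int|\nabla U_n|^2<\tfrac12$ and $\int(F(U_n)-\tfrac{k_n}{2}U_n^2)=0$ by \eqref{eq}; using $(f_3)$ and $(V0)$-type bounds one gets $\{U_n\}$ bounded in $H^1_{\mathrm{rad}}(\R^2)$, so up to a subsequence $U_n\rightharpoonup U_0$ in $H^1_{\mathrm{rad}}$ and a.e. By Lemma \ref{smon}, $\int F(U_n)\to\int F(U_0)$, hence $\int(F(U_0)-\tfrac{k}{2}U_0^2)=0$; if $U_0\not\equiv0$ this and weak lower semicontinuity of the Dirichlet norm give $m_k\le\Phi_k(U_0)\le\liminf_n\Phi_{k_n}(U_n)=\liminf_n m_{k_n}$, so combined with the upper bound $m_{k_n}\to m_k$. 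The only thing to rule out is $U_0\equiv0$: then $\int F(U_n)\to0$, while $\int F(U_n)=\tfrac{k_n}{2}\int U_n^2$ forces $\int U_n^2\to0$, and together with $\int|\nabla U_n|^2=2m_{k_n}$ bounded away from $0$ (by the already-proven lower bound $m_{k_n}\ge\rho>0$, uniform for $k_n$ near $k$) this contradicts the Moser--Trudinger-based estimate that $\Phi_{k_n}(U_n)\ge c>0$ cannot occur with $\|U_n\|\to0$. The main obstacle throughout is the upper bound in (i) with the sharp constant; monotonicity and continuity then follow the standard mountain-pass comparison scheme of \cite{MR2900480}.
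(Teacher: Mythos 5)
Your proposal follows essentially the same route as the paper: the upper bound via the mountain-pass level of the subcritical functional $u\mapsto\tfrac12\|u\|_k^2-\tfrac{\lambda_p}{p}\|u\|_p^p$ (the computation from \cite{MR2875652} giving $m_k\le\tfrac{p-2}{2p}\lambda_p^{-2/(p-2)}\mathcal{C}_p^{p/(p-2)}$), strict monotonicity by evaluating $\Phi_{k_1}$ along the optimal path for $\Phi_{k_2}$, and continuity by combining the path comparison for $\limsup$ with ground states, Lemma \ref{smon} and weak lower semicontinuity for $\liminf$. The only substantive addition is your explicit exclusion of the trivial weak limit $U_0\equiv0$ in the continuity step (via the Pohozaev relation $\int F(U_n)=\tfrac{k_n}{2}\int U_n^2$ and the uniform positive lower bound on $m_{k_n}$), a detail the paper leaves implicit; this is a correct and worthwhile refinement rather than a different approach.
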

\begin{proof} Similar as in \cite{MR2875652}, we get that
$$
m_k\le\frac{p-2}{2p}\lambda_p^{-2/(p-2)}\mathcal{C}_p^{p/(p-2)}.
$$
Then, for $$\lambda_p>\left(\frac{p-2}{p}\cdot\frac{2\mu}{\mu-2}\right)^{\frac{p-2}{2}}\mathcal{C}_p^{\frac{p}{2}},$$ we have $m_k<\frac{1}{2}\left(\frac{1}{2}-\frac{1}{\mu}\right)$. Obviously, $m_k>0$. Now, we show the monotonicity of $m$. For any $0<k_1 < k_2$ and $ \gamma \in \Gamma_{k_2}$, it is clear that  $\gamma \in \Gamma_{k_1}$ and
$$ m_{k_1} \le \max_{t\in[0,1]} \Phi_{k_1}(\gamma(t)) < \max_{t\in[0,1]}
\Phi_{k_2}(\gamma(t))= m_{k_2}.$$
To prove the continuity of $m$, for any $\{k_j\}_j\subset(0,\iy)$ with $k_j\rg k>0$ as $j\rg\iy$, we take $\gamma \in \Gamma_k$, then for $j$
large enough $\gamma \in \Gamma_{k_j}$,
$$ m_{k_j} \le \max_{t\in[0,1]} \Phi_{k_j}(\gamma(t)) \to \max_{t\in[0,1]}
\Phi_{k}(\gamma(t))= m_{k}.$$
So
$
\limsup_{j\rg\iy} m_{k_j} \le m_k.
$
On the other hand, let $U_j$ be a radially symmetric ground state solution of \re{lmtp} with $k=k_j$, then $\{U_j\}_j$ is bounded in $H^1(\mathbb R^2)$ and $\|\na U_j\|_2^2=2m_{k_j}<1/2$. Up to a subsequence, we assume that $U_j \rightharpoonup U$ in $H_{\mathrm {\mathrm{rad}}}^1(\mathbb R^2)$ and a. e. in $\R^2$. Then by Lemma \ref{smon}
\[
\ds \int_{\mathbb R^2} F(U_j)\,\ud x \to \ds \int_{\mathbb R^2}  F(U)\,\ud x.
\]
Therefore, we conclude our result by the lower semi-continuity of the norm $\|\cdot\|$.

\end{proof}

\section{The min-max analysis} \label{The min-max argument}

This section is devoted to studying the min-max argument. Inspired by \cite{MR2900480}, let us define the following topological cone
\begin{equation*}
\mathcal{C}_\e=\left\{ \gamma_t(\cdot-\xi)\;\vline\; t\in[0,1], \xi\in
\overline{B_0^\e}\cap E \right\}
\end{equation*}
and a family of deformations of $\mathcal{C}_\e$:
\[
\Gamma_\e=\left\{\eta\in C\left(\mathcal{C}_\e,H^1(\RT)\right) \; \vline \;
\eta(u)=u, \ \forall  u \in \partial\mathcal{C}_\e \right\},
\]
where $\partial \mathcal{C}_\e$ is the topological boundary of $\mathcal{C}_\e$.
Here $\gamma_t=\gamma(t)$ is the curve at which the infimum in \eqref{mnmxlm} is achieved for $k=1$, see \cite[Lemma 2.1]{MR1974637}. Define the min-max level
\[
m_\e=\inf_{\eta\in\Gamma_\e}\max_{u\in\mathcal{C}_\e} \tilde{I}_\e(\eta(u)).
\]
Now following a similar idea as in \cite{MR2900480}, we state the following propositions without proof to make the paper self contained.
\begin{proposition}
\label{pr:bb} There exist $\e_0>0, \ \delta>0$ such that for
every $\e\in(0,\e_0)$
\begin{equation*} 
\tilde{I}_\e|_{\partial \mathcal{C}_\e} \leq m-\delta.
\end{equation*}
\end{proposition}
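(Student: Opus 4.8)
The goal is to show that $\tilde I_\e$ is uniformly below the mountain-pass level $m = m_1$ of the limit problem on the topological boundary $\partial\mathcal{C}_\e$, for all small $\e$. The plan is to decompose $\partial\mathcal{C}_\e$ into its natural pieces and estimate $\tilde I_\e$ on each. Since $\mathcal{C}_\e=\{\gamma_t(\cdot-\xi): t\in[0,1],\ \xi\in\overline{B_0^\e}\cap E\}$, the boundary $\partial\mathcal{C}_\e$ consists of (i) the "bottom" $t=0$, where $\gamma_0=0$ and hence $\tilde I_\e=0<m-\delta$ trivially; (ii) the "top" $t=1$, where $\Phi_1(\gamma_1)<0$; and (iii) the "lateral" part $\xi\in\partial(\overline{B_0^\e}\cap E)$, i.e. $|\e\xi|=R_0$ with $\xi\in E$. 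On the top piece I would compare $\tilde I_\e(\gamma_1(\cdot-\xi))$ with $\Phi_1(\gamma_1)$: the difference comes only from replacing the constant potential $1$ by $V(\e x)$ and replacing $F$ by $G_\e(x,\cdot)$. Because $\gamma_1$ has fixed compact-ish concentration and $\e\xi\to 0$, while $V(\e\cdot+\e\xi)\to V(0)=1$ uniformly on the (essentially bounded, exponentially decaying) support-mass of $\gamma_1$, and since for $\e$ small the truncation region $\R^2\setminus B_1^\e$ is pushed to infinity so that $G_\e(x,\gamma_1)=F(\gamma_1)$ on the relevant set, one gets $\tilde I_\e(\gamma_1(\cdot-\xi))\le \Phi_1(\gamma_1)+o_\e(1)<0<m-\delta$ uniformly in $\xi$.

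The crux is the lateral part (iii), and this is the step I expect to be the main obstacle. Here $\xi\in E$ with $|\e\xi|=R_0$, so the bump $\gamma_t(\cdot-\xi)$ is centered at a point of $\partial B_0^\e\cap E$. Using $(V0)$ and the structure hypotheses $(V1)$/$(V2)$/$(V3)$ on $V$ restricted to $E$ (local maximum of $V|_E$ at $0$ with $V(0)=1$), one should arrange $R_0$ small enough that $V(\e x)<1$ for $\e x$ near the translated center, more precisely that $\sup\{V(y): y\in \partial B_0\cap E\} \le 1-2\delta_0$ for some $\delta_0>0$ — wait, $V(0)=1$ and $0$ is a max of $V|_E$, so on $\partial B_0\cap E$ we have $V<1$; choose $R_0$ so that $V|_{E}\le 1-2\delta_0$ on $\partial B_0\cap E$ for a fixed $\delta_0>0$. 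Then for $\e$ small, $V(\e x)\le 1-\delta_0$ on the bulk of the mass of $\gamma_t(\cdot-\xi)$. Combining this with the fact that $t\mapsto \max_t\Phi_{1-\delta_0}(\gamma_t)=m_{1-\delta_0}<m_1=m$ (strict monotonicity, Lemma \ref{monmk}), and again using that the truncation is inactive near the center for $\e$ small, yields
\[
\tilde I_\e(\gamma_t(\cdot-\xi))\ \le\ \Phi_{1-\delta_0}(\gamma_t(\cdot-\xi))+o_\e(1)\ \le\ m_{1-\delta_0}+o_\e(1)\ \le\ m-\delta
\]
with $\delta:=\tfrac12(m-m_{1-\delta_0})>0$, uniformly for $\xi\in\partial B_0^\e\cap E$ and $t\in[0,1]$, once $\e_0$ is small. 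The delicate points are: making the $o_\e(1)$ uniform in $(t,\xi)$ (use that $\{\gamma_t\}_{t\in[0,1]}$ is a compact curve in $H^1$ with uniform tail decay, so a single modulus of continuity for $V$ near $0$ works for all of them), and handling the part of $\R^2$ where $V(\e x)$ might exceed $1$ or where the truncation is active — this is controlled because there $\gamma_t(\cdot-\xi)$ carries only $O(\e)$-small mass and because on the truncation region $\tilde F\le \tfrac a2 t^2$ only helps the inequality.

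To organize the write-up I would: first fix $R_0$ and $\delta_0$ from the hypothesis on $V|_E$ and set $\delta=\tfrac12(m-m_{1-\delta_0})$; second, record the uniform-tail/compactness facts about the curve $\{\gamma_t\}$; third, dispatch the bottom and top faces of $\partial\mathcal{C}_\e$; fourth, do the lateral estimate as above, splitting the integral into a neighborhood of the translated center (where $V(\e x)\le 1-\delta_0$ and $G_\e=F$ for $\e$ small) and its complement (where $\gamma_t(\cdot-\xi)$ is exponentially small); fifth, choose $\e_0$ so that all the $o_\e(1)$ terms are $<\delta$. The main obstacle, as noted, is the uniform-in-$(t,\xi)$ control on the lateral face and the interplay with the truncation; everything else is a routine comparison with the limit functional $\Phi_{1-\delta_0}$ together with the monotonicity $m_{1-\delta_0}<m$ from Lemma \ref{monmk}.
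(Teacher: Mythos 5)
Your overall strategy --- splitting $\partial\mathcal{C}_\e$ into the faces $t=0$, $t=1$ and $|\e\xi|=R_0$, discarding the truncation because $G_\e=F$ wherever $\gamma_t(\cdot-\xi)$ is not uniformly small, and gaining strictness on the lateral face from $V<1$ on $\partial B_0\cap E$ --- is exactly the argument of \cite{MR2900480} that the paper invokes here without proof. However, your key chain on the lateral face contains a reversed inequality: since $m_{1-\delta_0}$ is an \emph{infimum} over paths and $\gamma$ is optimal for $\Phi_1$ rather than for $\Phi_{1-\delta_0}$, you may only conclude $\max_{t}\Phi_{1-\delta_0}(\gamma_t)\ \ge\ m_{1-\delta_0}$, not $\le$; the strict monotonicity $m_{1-\delta_0}<m$ from Lemma \ref{monmk} is therefore not the statement you need. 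The standard substitute is to bound
$$
\tilde I_\e(\gamma_t(\cdot-\xi))\ \le\ \Phi_1(\gamma_t)-\frac{\delta_0}{2}\,\|\gamma_t\|_{L^2}^2+o_\e(1)
$$
uniformly in $(t,\xi)$ (using the uniform tails of the compact curve, as you note) and then to show directly that $\max_{t\in[0,1]}\bigl(\Phi_1(\gamma_t)-\frac{\delta_0}{2}\|\gamma_t\|_{L^2}^2\bigr)<m$: if this maximum were $\ge m$ at some $t_0$, then $\Phi_1(\gamma_{t_0})=m$ and $\|\gamma_{t_0}\|_{L^2}=0$, i.e.\ $\gamma_{t_0}=0$ and $m=0$, a contradiction; compactness of $[0,1]$ then yields a uniform margin $2\delta>0$. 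With this replacement your argument closes.

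Two smaller points. First, the parenthetical claim that on the truncation region ``$\tilde F\le\frac a2 t^2$ only helps the inequality'' points the wrong way: since $G_\e\le F$ for nonnegative arguments, replacing $F$ by $G_\e$ \emph{raises} $\tilde I_\e$, so the contribution $\int(1-\chi_\e)(F-\tilde F)(\gamma_t(\cdot-\xi))\,\ud x$ must genuinely be shown to vanish --- which it does, because outside $B_1^\e$ the translated profile lies below the threshold where $f(s)\le as$, so $\tilde F=F$ there. Second, your reduction to $\sup_{\partial B_0\cap E}V\le 1-2\delta_0$ is immediate under (V1) (isolated local maximum) and (V2) (non-degenerate saddle), but under (V3) a merely non-strict local maximum of $V|_E$ at $0$ does not by itself give $V<1$ on $\partial B_0\cap E$; this step needs either a strict reading of (V3)(a) or an additional argument, a subtlety inherited from the cited reference rather than a defect specific to your plan.
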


\begin{proposition}\label{estima}
$
\limsup\limits_{\e\to 0} m_{\e} \leq m.
$
\end{proposition}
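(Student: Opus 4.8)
The plan is to construct, for each small $\e>0$, a single competitor map $\eta_\e\in\Gamma_\e$ whose maximum over the cone $\mathcal C_\e$ is at most $m+o(1)$ as $\e\to0$. The natural candidate is simply the identity $\eta=\id$ on $\mathcal C_\e$, so that $m_\e\le\max_{u\in\mathcal C_\e}\tilde I_\e(u)$; the whole content is then to estimate $\tilde I_\e$ on elements of the cone, i.e.\ on functions of the form $\gamma_t(\cdot-\xi)$ with $t\in[0,1]$ and $\xi\in\overline{B_0^\e}\cap E$, and show that the supremum of these values converges to $m$ as $\e\to0$.

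First I would fix $t\in[0,1]$ and $\xi\in\overline{B_0^\e}\cap E$ and write out
\[
\tilde I_\e(\gamma_t(\cdot-\xi))=\frac12\int_{\RT}\bigl(|\nabla\gamma_t|^2+V(\e x+\e\xi)\gamma_t^2\bigr)\,\ud x-\int_{\RT}G_\e(x+\xi,\gamma_t)\,\ud x,
\]
using translation invariance of the Dirichlet integral. Since $\xi\in\overline{B_0^\e}$ means $\e\xi\in\overline{B_0}$, and $R_0$ is a small constant to be chosen, the argument $\e x+\e\xi$ ranges over a small neighbourhood of $0$ once $x$ is in a fixed compact set; by $(V1)$/$(V2)$/$(V3)$ together with $V(0)=1$ we have $V(\e x+\e\xi)=1+o(1)$ uniformly on compact sets in $x$ as $\e\to0$ and as $R_0\to0$. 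Likewise, because $\gamma_t$ decays exponentially (it lies on the fixed compact curve $\gamma([0,1])\subset H^1_{\mathrm{rad}}$ whose elements are exponentially decaying ground-state–type profiles), the mass of $\gamma_t$ outside a large ball is uniformly small, so the contribution of the region where $V(\e x+\e\xi)$ might deviate is negligible. For the nonlinear term, I would split according to the cutoff $\chi_\e$: on the set where $\chi_\e(x+\xi)=1$ we get exactly $F(\gamma_t)$, and on the complement we get $\tilde F(\gamma_t)\le F(\gamma_t)$, so in all cases $\int G_\e(x+\xi,\gamma_t)\le\int F(\gamma_t)$, with the deficit again controlled by the tail of $\gamma_t$ outside $B_2^\e$, which shrinks as $\e\to0$ since $B_2^\e$ exhausts $\RT$. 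Putting these together,
\[
\tilde I_\e(\gamma_t(\cdot-\xi))\le\frac12\int_{\RT}\bigl(|\nabla\gamma_t|^2+\gamma_t^2\bigr)\,\ud x-\int_{\RT}F(\gamma_t)\,\ud x+o(1)=\Phi(\gamma_t)+o(1),
\]
uniformly in $t\in[0,1]$ and $\xi$. Taking the max over $t$ and $\xi$ and recalling $\max_{t}\Phi(\gamma_t)=m$ gives $\max_{\mathcal C_\e}\tilde I_\e\le m+o(1)$, hence $m_\e\le m+o(1)$, i.e.\ $\limsup_{\e\to0}m_\e\le m$.

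The main obstacle is making the two $o(1)$ estimates \emph{uniform} simultaneously in $t\in[0,1]$ and in $\xi\in\overline{B_0^\e}\cap E$, and interlocking this with the (still free) choice of the radii $R_0,\dots,R_5$. The point is that $R_0$ must be taken small enough that $V$ is close to $1$ on $\overline{B(0,2R_0)}$ (using continuity of $V$ near $0$ from $(V1)$–$(V3)$), while at the same time $R_1,R_2$ control the cutoff $\chi$; but since $\gamma([0,1])$ is a \emph{fixed} compact subset of $H^1_{\mathrm{rad}}(\RT)$ consisting of exponentially decaying functions, its tails are uniformly small independently of these radii, so one can first absorb the nonlinear tail error and the potential tail error, and only then the error coming from $V(\e\,\cdot)\to 1$ on the fixed compact core — the latter being $o(1)$ as $\e\to0$ for each fixed choice of radii. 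A mild technical care is also needed because $\tilde f$ (hence $G_\e$) involves the truncation level $a$; but since $\tilde f(s)\le f(s)$ for $s\ge0$ one always has $\tilde F\le F$, so the inequality direction we need is automatic and no lower bound on $G_\e$ is required here.
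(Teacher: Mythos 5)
Your overall strategy -- take $\eta=\mathrm{id}\in\Gamma_\e$, so that $m_\e\le\max_{u\in\mathcal C_\e}\tilde I_\e(u)$, translate by $\xi$, bound $G_\e\le F$ (using $\tilde F\le F$), and control the potential term -- is exactly the standard route (the paper omits the proof and defers to \cite{MR2900480}, which argues this way). However, there is a genuine gap in your treatment of the potential term. You claim that $V(\e x+\e\xi)=1+o(1)$ uniformly on compacta in $x$, with the error ``$o(1)$ as $\e\to0$ and as $R_0\to0$,'' and later assert that for each \emph{fixed} choice of the radii this error is $o(1)$ as $\e\to0$. That last assertion is false: since $\xi$ ranges over all of $\overline{B_0^\e}\cap E$, the rescaled centre $\e\xi$ ranges over the \emph{fixed} set $\overline{B_0}\cap E$ for every $\e$; it does not shrink to the origin. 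Consequently, continuity of $V$ alone only yields
$$\limsup_{\e\to0}\max_{\mathcal C_\e}\tilde I_\e\ \le\ m+C\Bigl(\sup_{\overline{B_0}}V-1\Bigr),$$
which is strictly weaker than the claimed exact inequality $\limsup_\e m_\e\le m$ (and the exact limit $m_\e\to m$ is what is needed later, e.g.\ in Proposition \ref{hola}). Note also that under $(V2)$ one has $\sup_{B(0,2R_0)}V>1$ for every $R_0>0$ (the potential increases along $E^\perp$), so ``$V$ close to $1$ on $\overline{B(0,2R_0)}$'' cannot be upgraded to $V\le 1$ there.

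The missing ingredient is precisely the structural hypothesis on the critical point: in all three cases $(V1)$--$(V3)$ the restriction $V|_E$ has a local maximum at $0$ (for $(V1)$, $E=\R^2$ and $0$ is a local maximum; for $(V2)$, $E$ is the negative eigenspace of $D^2V(0)$), hence $V(\e\xi)\le V(0)=1$ for all $\xi\in\overline{B_0^\e}\cap E$ once $R_0$ is small. The correct estimate is then the two-step one: for $x$ in a compact set $K$,
$$V(\e x+\e\xi)\ \le\ V(\e\xi)+\omega(\e\,{\textstyle\sup_K}|x|)\ \le\ 1+o_\e(1),$$
uniformly in $\xi$, where $\omega$ is a modulus of continuity of $V$ on a neighbourhood of $\overline{B_0}$; outside $K$ one uses $V\le\alpha_2$ together with the uniform smallness of the $L^2$-tails of the compact set $\gamma([0,1])$ (compactness in $H^1$ suffices here; you do not need, and should not assert, exponential decay of every $\gamma_t$). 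With this correction the rest of your argument goes through and the proof coincides with the paper's intended one.
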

Now we aim to give a similar estimate as in Proposition \ref{estima} from below. In order to do this we compare this level with another min-max level, which we will define with the help of barycenter type maps as below. Recall that in case of (V1), $E= \RT$, whereas, in case of (V2), $E$ is the space formed by eigenvectors associated to negative eigenvalues of $D^2V(0)$. Define $\pi_E$ as the orthogonal projection on $E$ and $h_\e : \RT \to E$ defined as $h_\e (x)=\pi_E(x)
\chi_{B_3^\e}(x)$, where $\chi_{B_3^\e}$ is the characteristic
function of $B_3^\e$. Let us define $\beta_\e: H^1(\RT)\setminus \{0\} \to E$ such that for any $u\in
H^1(\RT)\setminus \{0\}$
\[
\beta_\e (u) =\frac{\int_{\RT} h_\e(x) u^2\,\ud x}{\int_{\RT} u^2\,\ud x }.
\]
For $\delta >0$ small, let
$$
b_\e=\inf_{\Sigma \in \Xi_\e} \max_{u\in \Sigma}\tilde I_\e (u),
$$
where
\[
\Xi_\e=\left\{ \Sigma \subset H^1(\RT)\setminus\{0\} \left|
\begin{array}{l}
\Sigma \hbox{ is connected and compact,}
\\
\exists u_0,u_1 \in \Sigma \hbox{ s.t. }  \|u_0\|\leq \delta , \tilde
I_\e(u_1)<0,
\\
\forall u \in \Sigma,\  \beta_\e(u)=0.
\end{array}
\right. \right\}.
\]

Observe that, since $\tilde{I}_\e \geq \Phi_{\alpha_1}$, we have
$$ b_\e \geq m_{\alpha_1}>0.$$
Now similar to \cite[Lemma 3.3]{MR2900480}, we give the following lemma, whose proof is omitted.
\begin{lemma}
There exists $\e_0>0$ such that for any $\e \in (0 ,\e_0)$, \begin{equation*}
m_\e\geq b_\e.\end{equation*}
\end{lemma}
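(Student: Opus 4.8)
The plan is to show that every deformation $\eta\in\Gamma_\e$ produces, after restriction to the cone $\mathcal C_\e$, a set in the class $\Xi_\e$ (up to normalization), so that taking infima gives $m_\e\ge b_\e$. The key is a degree-theoretic/intermediate-value argument: I need to find, for each $\eta\in\Gamma_\e$, a point $u$ in $\mathcal C_\e$ with $\beta_\e(\eta(u))=0$, together with the two endpoint-type functions $u_0,u_1$ required by $\Xi_\e$. First I would fix notation: parametrize $\mathcal C_\e$ by $(t,\xi)\in[0,1]\times(\overline{B_0^\e}\cap E)$ via $u_{t,\xi}=\gamma_t(\cdot-\xi)$, and observe that on the boundary $\partial\mathcal C_\e$ we have $\eta=\mathrm{id}$, so $\eta$ only moves the ``interior'' of the cone. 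On $\partial\mathcal C_\e$ one has either $t\in\{0,1\}$ or $\xi\in\partial(B_0^\e\cap E)$; the function $\gamma_0=0$ is excluded from $H^1\setminus\{0\}$, so I would first note (as in \cite{MR2900480}) that one may restrict attention to the part of the cone where $\eta(u)\ne 0$, using Proposition \ref{pr:bb} to keep away from $0$ on the relevant faces.

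Next I would define the map $\Psi_\e:[0,1]\times(\overline{B_0^\e}\cap E)\to E$ by $\Psi_\e(t,\xi)=\beta_\e(\eta(u_{t,\xi}))$ (defined wherever $\eta(u_{t,\xi})\ne 0$) and analyze its behavior on the boundary. Since $\eta$ is the identity on $\partial\mathcal C_\e$, on the face $\xi\in\partial(B_0^\e\cap E)$ the map $\Psi_\e$ equals $\beta_\e(\gamma_t(\cdot-\xi))$, and the point is that $\gamma_t$ is essentially a translate/dilate of the radially-symmetric ground state $U$, so its barycenter against $h_\e(x)=\pi_E(x)\chi_{B_3^\e}(x)$ is close to $\pi_E(\xi)=\xi$ provided the supports sit inside $B_3^\e$ (which holds after the change of scale for the radii $R_i$ chosen suitably, $R_0<R_3$). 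Hence on that face $\Psi_\e(t,\xi)$ is a small perturbation of $\mathrm{id}_{\partial(B_0^\e\cap E)}$ and in particular is non-vanishing and has nonzero topological degree on $B_0^\e\cap E$. A standard compactness/Brouwer-degree argument (exactly the mechanism of \cite[Lemma 3.3]{MR2900480}) then yields a zero of $\Psi_\e$, i.e.\ a $u^\ast\in\mathcal C_\e$ with $\beta_\e(\eta(u^\ast))=0$.

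Having produced such a $u^\ast$, I would build the competitor $\Sigma\in\Xi_\e$. Take $\Sigma$ to be (the normalization, if needed, of) the $\eta$-image of an appropriate connected sub-slice of $\mathcal C_\e$ through $u^\ast$ on which $\beta_\e$ is forced to vanish --- concretely, intersect the preimage $\Psi_\e^{-1}(0)$ with a path joining the $t=0$ endpoint to the $t=1$ endpoint; such a connected component exists precisely because of the degree obstruction, this is the classical ``linking on a cone'' argument. For the endpoints: at $t=0$ the relevant function has small norm (here I would invoke that $\gamma_0$ is near $0$, or use that $\eta$ fixes a neighborhood via Proposition \ref{pr:bb} to guarantee $\|u_0\|\le\delta$), and at $t=1$ one has $\Phi(\gamma_1)<0$, hence $\tilde I_\e(\eta(u_1))=\tilde I_\e(u_1)<0$ since $\eta$ is the identity there and $\tilde I_\e\le \Phi$ on that region after scaling. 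Therefore $\Sigma$ belongs to $\Xi_\e$, whence $\max_{u\in\mathcal C_\e}\tilde I_\e(\eta(u))\ge \max_{v\in\Sigma}\tilde I_\e(v)\ge b_\e$; taking the infimum over $\eta\in\Gamma_\e$ gives $m_\e\ge b_\e$.

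The main obstacle I anticipate is the boundary analysis of $\Psi_\e$ --- showing that $\beta_\e(\gamma_t(\cdot-\xi))$ genuinely behaves like $\xi$ on the face $\xi\in\partial(B_0^\e\cap E)$ and stays away from $0$ there, so that the Brouwer degree is well-defined and nonzero. This requires careful tracking of the (rescaled) radii $R_0,\dots,R_3$: the translated ground states must have their essential mass inside $B_3^\e$ while $\xi$ ranges over $\overline{B_0^\e}\cap E$, and the exponential decay of $U$ (cited from \cite[Proposition 2.1]{MR3428453}) must be quantified to control the tail contributions to $\beta_\e$. A secondary subtlety is handling the possibility $\eta(u_{t,\xi})=0$ for interior points, which must be ruled out on the locus where the degree argument is run --- again via Proposition \ref{pr:bb} together with the structure of $\Gamma_\e$. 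Once these points are settled, the rest is the bookkeeping of the min-max comparison, which is routine.
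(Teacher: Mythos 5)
Your argument is essentially the proof the paper intends: the paper omits the proof and refers to \cite[Lemma~3.3]{MR2900480}, and your degree-theoretic construction of a connected component of the zero set of $\beta_\e\circ\eta$ joining the small-norm region to the negative-energy region, whose $\eta$-image is then a competitor in $\Xi_\e$, is precisely that argument. The subtleties you flag (the degenerate $t=0$ face where $\gamma_0=0$, interior points with $\eta(u)=0$, and the uniform control of the tails of $\gamma_t(\cdot-\xi)$ outside $B_3^\e$ needed to make $\beta_\e$ close to the identity on the lateral boundary) are the genuine technical points, and they are resolved exactly as you propose in the cited reference, so there is no gap.
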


The following proposition gives us the desired estimate from below. We will prove this result in Section 6.
\begin{proposition}\label{fund}
$
\liminf\limits_{\e\to 0} m_{\e} \geq m.
$
\end{proposition}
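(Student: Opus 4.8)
The goal is to show $\liminf_{\e\to0} m_\e \ge m$. By the previous lemma we have $m_\e \ge b_\e$, so it suffices to prove $\liminf_{\e\to0} b_\e \ge m$. I would argue by contradiction: suppose there are $\e_n\to0$ and $\Sigma_n\in\Xi_{\e_n}$ with $\max_{u\in\Sigma_n}\tilde I_{\e_n}(u)\le b_{\e_n}+o(1)\le m-\delta_0$ for some $\delta_0>0$ along a subsequence. The strategy is the classical barycenter/splitting argument: extract from each $\Sigma_n$ a distinguished function $u_n$ (using the connectedness of $\Sigma_n$ together with an intermediate-value argument on some auxiliary continuous functional, e.g. a suitable $L^2$-mass or level functional, so that $u_n$ sits at a "mountain-pass-like" spot), analyze the concentration behavior of $u_n$, and derive that the energy must in fact be at least $m$, contradicting the bound $m-\delta_0$.

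\textbf{Key steps.} First I would set up the rescaled/limit picture: since $\tilde I_{\e_n}\ge \Phi_{\alpha_1}\ge$ (something controlling the $H^1$ norm via $(f_3)$ and the choice of $a$), the relevant functions have bounded energy, hence by the Ambrosetti--Rabinowitz-type estimate in Step 1 of Proposition \ref{PSC} their $H^1$-norms stay bounded and $\|\nabla u_n\|_2^2$ stays below the Moser--Trudinger threshold. Second, because $V(\e_n x)\to V(0)=1$ uniformly on compacta and the truncation $g_{\e_n}$ equals $f$ on the region $B_1^{\e_n}$ which exhausts $\R^2$, one expects, after translating by appropriate points $x_n$, that $u_n(\cdot+x_n)\rightharpoonup w$ where $w$ solves the limit equation \eqref{wolin} (the $k=1$ limit problem), and that the "missing mass" near infinity is negligible by an argument parallel to Steps 2--5 of Proposition \ref{PSC} and Lemma \ref{smon}. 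Third, the barycenter constraint $\beta_{\e_n}(u)=0$ for all $u\in\Sigma_n$ is what forces the concentration to happen (morally) at the origin in the $E$-directions: together with Proposition \ref{pr:bb} and the structure of the cone $\mathcal C_\e$, it prevents the path from escaping to a configuration of energy below $m$. Using the min-max characterization $m=m_1=S^1_g$ from Section 4 and the lower semicontinuity of $\Phi_1$, I would conclude that the limiting profile carries energy $\ge m_1=m$, so $\max_{u\in\Sigma_n}\tilde I_{\e_n}(u)\ge m - o(1)$, contradicting $\le m-\delta_0$. Hence $\liminf_{\e\to0} b_\e\ge m$, and therefore $\liminf_{\e\to0} m_\e\ge m$.

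\textbf{Main obstacle.} The delicate point is extracting the right distinguished function $u_n$ from the set $\Sigma_n$ and showing it genuinely "sees" the limit problem at level $\ge m$: one must combine the topological information (connectedness of $\Sigma_n$, the two endpoints $u_0,u_1$ with $\|u_0\|\le\delta$ and $\tilde I_{\e_n}(u_1)<0$, and the vanishing barycenter) to produce a function that is, up to translation, a near-critical point of $\Phi_1$ with bounded-below energy. This is exactly the role of the barycenter map $\beta_\e$: it rules out the alternative scenario in which the energy of $\Sigma_n$ is lowered by letting the bump drift off along $E$ (where $V$ has a maximum, hence where naively energy could drop toward $m_{\alpha_1}<m$). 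Making this dichotomy rigorous — showing that the barycenter constraint is incompatible with such a drift while the energy stays $<m$ — is the heart of the proof and mirrors \cite[Lemma 3.3 and the subsequent estimates]{MR2900480}; the Trudinger--Moser setting only enters through the compactness inputs (Lemma \ref{MTI}, Lemma \ref{smon}, Strauss's lemma) already established above, so no essentially new difficulty arises there beyond bookkeeping of the exponential nonlinearity near the threshold.
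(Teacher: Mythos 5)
Your proposal correctly identifies the reduction $m_\e\ge b_\e$, the role of the barycenter constraint, and the monotonicity of $k\mapsto m_k$ as the final ingredient, but it has a genuine gap at exactly the point you flag as ``the main obstacle,'' and the substitute you suggest there would not work. The definition of $b_\e$ is a min-max over sets $\Sigma\in\Xi_\e$, and an intermediate-value argument on a connected near-optimal set $\Sigma_n$ can only produce an element with a prescribed value of some auxiliary functional; it cannot produce a function that is a near-critical point of $\tilde I_{\e}$, which is what the concentration-compactness and splitting machinery (Lemmas \ref{lionsadm} and \ref{spliting}) require in order to identify the weak limits of the translated profiles as \emph{solutions} of limit equations. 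The paper resolves this by applying the constrained min-max principle to obtain an actual constrained critical point $u_\e$ with $\tilde I_\e(u_\e)=b_\e$, $\beta_\e(u_\e)=0$, and a Lagrange multiplier term $\lambda_\e\cdot h_\e(x)u_\e$ in the Euler equation (Lemma \ref{constrp}). This multiplier is not a technicality: one must show $\lambda_\e=O(\e)$ (Lemma \ref{asylam}), pass to $\bar\lambda=\lim\lambda_\e/\e$, and the limit profiles then solve $-\Delta u_i+V(\bar y_i)u_i=g(\bar y_i,u_i)+\bar\lambda\cdot\bar y_i\,u_i$; the entire proof of the lower bound is organized as a case analysis on the sign of $\bar\lambda\cdot\bar y_i$, which your sketch omits.

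A second, related inaccuracy: you assert that the limit problem is automatically the $k=1$ problem because $B_1^{\e}$ exhausts $\R^2$, and you invoke Proposition \ref{pr:bb} and the cone $\mathcal C_\e$ in the lower bound. Neither is how the argument goes. The bumps may concentrate at points $\bar y_i\in B_2$ with $V(\bar y_i)\neq 1$, giving only $b_\e\ge\sum_i m_{V(\bar y_i)}+o(1)$ a priori. The barycenter constraint is used \emph{after} the profile decomposition (Proposition \ref{CCLion}): in the single-bump case it forces $\pi_E(\bar y_1)=0$, i.e.\ $\bar y_1\in E^\perp$ where $V$ has a local minimum at $0$, whence $V(\bar y_1)\ge 1$ and $m_{V(\bar y_1)}\ge m$ by monotonicity; the multi-bump case and the case $\bar\lambda\cdot\bar y_i<0$ are handled separately. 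Proposition \ref{pr:bb} plays no role here. To repair your argument you would need to replace the intermediate-value extraction by the constrained deformation/min-max step producing $(u_\e,\lambda_\e)$, and then supply the estimates on $\lambda_\e$ and on the locations $\bar y_i$ that the paper develops in Lemmas \ref{l42}--\ref{le:L2LpH} and Proposition \ref{CCLion}.
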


Using the above propositions we get the following theorem on the existence of solutions for the truncated problem \eqref{truneq}.
\begin{theorem} There exists $\e_0>0$ such that for $\e \in
(0,\e_0)$ and $$\lambda_p>\left(\frac{p-2}{p}\cdot\frac{2\mu}{\mu-2}\right)^{\frac{p-2}{2}}\mathcal{C}_p^{\frac{p}{2}},$$
there exists a positive solution $u_{\e}$ of problem
\eqref{truneq}. Moreover, $\tilde{I}_{\e}(u_\e)=m_\e$.

\end{theorem}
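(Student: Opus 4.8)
The plan is to extract a positive critical point of $\tilde I_\e$ at the level $m_\e$ by combining the min-max structure developed in Section~5 with the Palais--Smale condition from Proposition~\ref{PSC}. First I would invoke Propositions~\ref{estima} and~\ref{fund} to conclude that $\lim_{\e\to0}m_\e=m$; together with Lemma~\ref{monmk} (which gives $m=m_1<\tfrac12(\tfrac12-\tfrac1\mu)$ under the stated lower bound on $\lambda_p$), this yields $m_\e<\tfrac12(\tfrac12-\tfrac1\mu)$ for all $\e$ small, so that the $(PS)_{m_\e}$ condition holds by Proposition~\ref{PSC}. Next I would justify that $m_\e$ is a genuine min-max level of mountain-pass type: Proposition~\ref{pr:bb} guarantees that $\tilde I_\e$ is strictly below $m-\delta$ on $\partial\mathcal C_\e$, while for every $\eta\in\Gamma_\e$ the set $\eta(\mathcal C_\e)$ must link with the "barycenter sphere" $\{\beta_\e=0\}$, so that $\max_{\mathcal C_\e}\tilde I_\e\circ\eta\ge b_\e\ge m_{\alpha_1}>0$; hence $m_\e\ge b_\e>\sup_{\partial\mathcal C_\e}\tilde I_\e$, which is exactly the separation needed to run a deformation argument. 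A standard quantitative deformation lemma (for the $C^1$ functional $\tilde I_\e$, which satisfies $(PS)_{m_\e}$) then produces a critical point $u_\e$ at level $m_\e$: if $m_\e$ were a regular value, one could push $\mathcal C_\e$ below $m_\e$ while fixing a neighborhood of $\partial\mathcal C_\e$, contradicting the definition of $m_\e$.

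It remains to check that $u_\e$ is positive (hence nontrivial) and that $\tilde I_\e(u_\e)=m_\e$. The last equality is immediate since $u_\e$ is obtained at the min-max level. For positivity, I would use the structure of the truncated nonlinearity: $g_\e(x,t)=0$ for $t\le0$ (because both $f$ extended by $(f_1)$ and $\tilde f$ vanish on $(-\infty,0]$), so testing $\langle \tilde I'_\e(u_\e),u_\e^-\rangle=0$ gives $\int(|\nabla u_\e^-|^2+V(\e x)(u_\e^-)^2)=0$, whence $u_\e\ge0$; then $u_\e\not\equiv0$ because $\tilde I_\e(u_\e)=m_\e\ge m_{\alpha_1}>0=\tilde I_\e(0)$, and strict positivity follows from the strong maximum principle applied to $-\Delta u_\e+V(\e x)u_\e=g_\e(x,u_\e)\ge0$ together with elliptic regularity (so $u_\e\in C^1$, using Lemma~\ref{MTI} to control the exponential term and get subcritical integrability of the right-hand side).

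The main obstacle is the linking/deformation step: one must verify that the deformation flow associated to $\tilde I_\e$ can be chosen to preserve the boundary condition defining $\Gamma_\e$ (i.e.\ to fix $\partial\mathcal C_\e$ pointwise) while still being admissible as an element of $\Gamma_\e$, and that the homotopy invariance of the barycenter-linking survives this deformation so that the competitor stays in the class used to define $b_\e$. This is precisely where the inequalities $\sup_{\partial\mathcal C_\e}\tilde I_\e\le m-\delta$ and $m_\e\ge b_\e\ge m_{\alpha_1}$ are used in tandem: the strict gap ensures that the deformation near the critical level $m_\e$ never touches $\partial\mathcal C_\e$, so the flow is automatically admissible, and one concludes by the usual contradiction. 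Since this is modeled on \cite{MR2900480}, I would carry out the argument following their scheme, only adapting the compactness input to Proposition~\ref{PSC} and the energy level bound to Lemma~\ref{monmk}.
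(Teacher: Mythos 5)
Your proposal is correct and follows essentially the same route as the paper: $m_\e\to m$ from Propositions \ref{estima} and \ref{fund}, the level bound $m_\e<\frac{1}{2}\left(\frac{1}{2}-\frac{1}{\mu}\right)$ from Lemma \ref{monmk} so that Proposition \ref{PSC} applies, the boundary separation from Proposition \ref{pr:bb}, a classical min-max/deformation theorem, and the maximum principle for positivity. One minor remark: the separation $m_\e>\sup_{\partial\mathcal C_\e}\tilde I_\e$ should be deduced from $m_\e\to m$ combined with Proposition \ref{pr:bb} (which you already have), not from the chain $m_\e\ge b_\e\ge m_{\alpha_1}$, since $m_{\alpha_1}\le m$ need not exceed $m-\delta$.
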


\begin{proof}
By Propositions \ref{estima} and \ref{fund}, we deduce that $m_{\e} \to m$ as $\e \to 0$.  From Proposition \ref{pr:bb} we get that for small values of $\e$, $m_\e > \max_{\partial \mathcal{C}_\e} \tilde{I}_{\e}$. Moreover, from Proposition \ref{PSC} that $\tilde{I}_\e$ satisfies the $(PS)_c$ condition for $c<\frac{1}{2}\left(\frac{1}{2}-\frac{1}{\mu}\right)$. By Lemma \ref{monmk}, for $\e>0$ small, we know $m_\e<\frac{1}{2}\left(\frac{1}{2}-\frac{1}{\mu}\right)$. Therefore, a classical min-max theorem implies that $m_\e$ is a critical value of $\tilde{I}_\e$ for $\e$ sufficient small. Let us denote by $u_\e$ a critical point associated to $m_\e$. By the maximum principle, $u_\e$ is positive.
\end{proof}

\section{Proof of Proposition \ref{fund}}

In order to prove this proposition, we give some lemmas.

\begin{lemma}\label{constrp}
There exists $\e_0>0$ such that, for any $\e\in (0,\e_0)$,
there exist $u_\e\in H^1(\RT)$, with $\beta_\e(u_\e)=0$, and
$\lambda_\e\in E$ such that
\begin{equation}\label{constreq}
-\Delta u_\e+V(\e x) u_\e=g_\e( x, u_\e) +\lambda_\e \cdot
h_\e(x) u_\e
\end{equation}
and
$
\tilde I_\e (u_\e)=b_\e.
$
Moreover, $\{u_\e\}$ is bounded in $H^1(\RT)$.
\end{lemma}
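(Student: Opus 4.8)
The plan is to realize $b_\e$ as a critical value of the restriction of $\tilde I_\e$ to the constraint manifold $\mathcal{M}_\e=\{u\in H^1(\RT)\setminus\{0\}:\beta_\e(u)=0\}$, and then to convert a constrained critical point into a solution of the Euler--Lagrange equation \eqref{constreq} with a Lagrange multiplier $\lambda_\e\in E$. First I would check that $\mathcal{M}_\e$ is a $C^1$ submanifold of $H^1(\RT)$ of codimension $\dim E$: the map $u\mapsto\beta_\e(u)$ is $C^1$ on $H^1(\RT)\setminus\{0\}$, and one verifies that its differential is surjective onto $E$ at every point of $\mathcal{M}_\e$ (for instance by testing against functions of the form $x_j\varphi(x)u$ with $\varphi$ supported in $B_3^\e$, which moves $\beta_\e$ in the $j$-th coordinate direction to first order). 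Thus $\Xi_\e$ is precisely the family of connected compact subsets of $\mathcal{M}_\e$ joining a small sphere to the sublevel set $\{\tilde I_\e<0\}$, and $b_\e$ is a standard min--max value for $\tilde I_\e|_{\mathcal{M}_\e}$.

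Next I would establish the linking/mountain-pass geometry on $\mathcal{M}_\e$: the bound $b_\e\ge m_{\alpha_1}>0$ is already recorded in the excerpt (since $\tilde I_\e\ge\Phi_{\alpha_1}$ on all of $H^1(\RT)$), and together with the upper bound $m_\e\ge b_\e$ combined with $\limsup_{\e\to0}m_\e\le m$ from Proposition \ref{estima}, we get $b_\e<\frac12(\frac12-\frac1\mu)$ for $\e$ small, using Lemma \ref{monmk}. Now I would invoke a constrained version of the $(PS)$ condition: any sequence $\{u_n\}\subset\mathcal{M}_\e$ with $\tilde I_\e(u_n)\to b_\e$ and $\|(\tilde I_\e|_{\mathcal{M}_\e})'(u_n)\|\to 0$ is bounded (the same computation $\mu\tilde I_\e(u_n)-\langle\tilde I_\e'(u_n),u_n\rangle\ge[(\frac\mu2-1)\alpha_1-\frac\mu2 a]\|u_n\|^2$ from Step 1 of Proposition \ref{PSC} works, because the constraint gradient is a finite-dimensional correction that contributes a controlled term to $\langle\tilde I_\e'(u_n),u_n\rangle$; one must also check $u_n$ stays away from $0$ so it remains in $\mathcal{M}_\e$, which follows from $b_\e>0$ and $(f_1)$). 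Then the argument of Proposition \ref{PSC}, applied along the constraint, yields $u_n\to u_\e$ strongly, so $b_\e$ is attained at some $u_\e\in\mathcal{M}_\e$ with $(\tilde I_\e|_{\mathcal{M}_\e})'(u_\e)=0$.

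The Lagrange multiplier rule then gives scalars producing $\lambda_\e\in E$ with $\tilde I_\e'(u_\e)=\lambda_\e\cdot h_\e(\cdot)u_\e$ in $H^{-1}$, which is exactly \eqref{constreq}; positivity of $u_\e$ (hence we may take $u_\e\ge 0$, replacing $u_\e$ by $|u_\e|$ since $\tilde I_\e$ and $\beta_\e$ are even and $g_\e(x,t)=0$ for $t<0$) and $\tilde I_\e(u_\e)=b_\e$ follow at once. The boundedness of $\{u_\e\}_{\e\in(0,\e_0)}$ is the final point: from $\tilde I_\e(u_\e)=b_\e\le m_\e$ and the uniform bound $\limsup_{\e\to0}m_\e\le m$, together with the coercivity estimate $\mu\tilde I_\e(u_\e)-\langle\tilde I_\e'(u_\e),u_\e\rangle\ge[(\frac\mu2-1)\alpha_1-\frac\mu2 a]\|u_\e\|^2$, we need to handle the extra term $\langle\lambda_\e\cdot h_\e u_\e,u_\e\rangle$; since $\beta_\e(u_\e)=0$ means $\int h_\e(x)u_\e^2\,\ud x=0$, testing \eqref{constreq} against $u_\e$ itself kills the multiplier term entirely, so the estimate is clean and gives $\sup_{\e\in(0,\e_0)}\|u_\e\|<\infty$. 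I expect the main obstacle to be the verification that $\mathcal{M}_\e$ is a genuine $C^1$ manifold with surjective constraint differential, together with the careful bookkeeping showing the $(PS)$ argument of Proposition \ref{PSC} survives the restriction to this manifold (i.e.\ that the finite-dimensional multiplier term does not spoil the compactness threshold $\frac12(\frac12-\frac1\mu)$); everything else is a routine adaptation of the min--max scheme already set up in the excerpt.
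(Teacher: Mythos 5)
Your route is the same as the paper's (min--max over $\Xi_\e$ with the barycenter constraint, the coercivity identity $\mu\tilde I_\e(\cdot)-\langle\tilde I_\e'(\cdot),\cdot\rangle$, and the key observation that $\beta_\e(u_\e)=0$ annihilates the multiplier term when one tests against $u_\e$ itself), but there are two genuine gaps. First, the set $\mathcal{M}_\e=\{\beta_\e=0\}$ is \emph{not} a $C^1$ submanifold with surjective constraint differential: at any $u\neq 0$ with $u\equiv 0$ on $B_3^\e$ one has $h_\e u\equiv 0$, hence $\beta_\e(u)=0$ \emph{and} $D\beta_\e(u)=0$, so the differential degenerates completely; such functions do occur in sets $\Sigma\in\Xi_\e$ (e.g.\ near the endpoint $u_1$ with $\tilde I_\e(u_1)<0$ nothing forces mass on $B_3^\e$). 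Your test variations $x_j\varphi u$ detect nothing at these points, so the "restrict to a manifold and apply the standard min--max plus Lagrange rule" framing breaks exactly where you flagged it might. The paper sidesteps this by not restricting to a manifold: it invokes a deformation/min--max theorem that directly produces a constrained $(PS)$ sequence $\{u_n\}$ with $\beta_\e(u_n)=0$ \emph{together with} explicit multipliers $\lambda_n\in E$ satisfying \eqref{psc2}.

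Second, and more consequentially, you never control the multipliers. In your ordering (attain the constrained critical point, then apply the multiplier rule) you need $D\beta_\e(u_\e)$ surjective onto $E$, which amounts to $u_\e\not\equiv 0$ on $B_3^\e$ and is not established; in the paper's ordering one must show $\{\lambda_n\}$ is bounded in order to pass to the limit in \eqref{psc2} and obtain \eqref{constreq} with a genuine $\lambda_\e\in E$. This is precisely the half of the paper's proof your proposal omits: one bounds $\int_{\RT}g_\e(x,u_n)\phi\,\ud x$ uniformly via the Moser--Trudinger inequality (using $\limsup_n\|\nabla u_n\|_2^2<1/2$) and then shows $\int_{\RT}h_\e(x)u_n\phi\,\ud x$ stays away from zero for a suitable $\phi$, using $u_\e\not\equiv 0$. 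Relatedly, when you "run Proposition \ref{PSC} along the constraint," the tail estimate (Step 2 there) only survives because the cut-off can be chosen with $\phi_R h_\e=0$, so the possibly unbounded multiplier term drops out of the localized test; this needs to be said, since at that stage of the argument $\lambda_n$ is not yet known to be bounded. (A minor further point: $\tilde I_\e$ is not even, so "replace $u_\e$ by $|u_\e|$" is not free; but positivity is not asserted in the lemma, so this is harmless.)
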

\begin{proof}
Let $\e>0$ be fixed. By a classical min-max theorem, there exists a
sequence $\{u_n\} \subset H^1(\RT)$, which is a constrained (PS) sequence
at level $b_\e$ satisfying $\beta_\e(u_n)=0$ and $\{\lambda_n\}\subset E$ such
that
\begin{align}\label{psc1}
\tilde I_\e(u_n) \to b_\e, &\ \hbox{ as }n\to +\infty,\\\label{psc2}
\tilde I_\e'(u_n)-  \frac{\lambda_n \cdot h_\e(x)u_n}{\int_{\RT} u_n^2\,\ud x}
\to 0, &\ \hbox{ as }n\to +\infty.
\end{align}
In light of \eqref{psc1} and \eqref{psc2}, together with $\beta_\e(u_n)=0$ and
repeating the arguments of Proposition \ref{PSC}, we get that $\limsup_{n\rg\iy}\|\na u_n\|_2^2<\frac{2\mu c_0}{\mu-2}<\frac{1}{2}$ ($c_0$ is given in Proposition \ref{PSC}) and, therefore, up to a subsequence, it converges weakly to some $u_\e\in H^1(\RT)$. By choosing $R$ large enough such that $\phi_R h_\e=0$, it can be proved as in Proposition \ref{PSC} that $u_n\rightarrow u_\e$ in $H^1(\RT)$. Then $\tilde I_\e (u_\e)=b_\e.$ Since $b_\e>0$, we get $u_\e\not\equiv 0$ and $\beta_\e(u_\e)=\ds \lim_{n\rightarrow \infty}\beta_\e(u_n)=0$.
Next we claim that $\lambda_n$ is bounded. By $(f_1)$-$(f_2)$, for any given $\delta>0$ and $\gamma>4\pi$, there exists $C_\delta>0$ such that
$
|f(t)|\leq \delta |t|+C_\delta(e^{\gamma t^2}-1), \; t\in\R.
$
By H\"older's inequality and Lemma \ref{MTI}, for any $\phi\in C_0^\infty(\mathbb R^2)$, there exists $C_1,C_2>0$ such that
\begin{align*}
\left| \ds \int_{\RT} g_\e(x, u_n)\phi(x)\,\ud x\right|&\leq \ds\int_{\RT} \left|f(u_n)\phi(x) \right|\,\ud x\leq C_1\delta+C_\delta\left(\ds \int_{\mathbb R^2} (e^{q\gamma u_n^2}-1)\,\ud x\right)^\frac1q\le C_2,
\end{align*}
by choosing $\gamma>4\pi$ (close to $4\pi$) and $q>1$ ((close to $1$)) such that $q\gamma\|\na u_n\|_2^2<4\pi$ for $n$ large. Hence from \eqref{psc2},  to show boundedness of $\{\lambda_n\}$ it is enough to prove that
\[
{\ds \int_{\mathbb R^2}h_\varepsilon(x)u_n\phi\,\ud x}\;\textrm{is bounded away from zero}.
\]
For this
\begin{align*}
\ds \int_{\mathbb R^2}h_\varepsilon(x)u_n\phi\,\ud x=\ds \int_{B_3^\varepsilon}\pi_E(x)u_n\phi\,\ud x\rightarrow \ds \int_{B_3^\varepsilon}\pi_E(x)u_\varepsilon \phi\,\ud x.
\end{align*}
Since $u_\varepsilon \not \equiv 0$, we are done. Hence ${\lambda_n}$ is bounded in $E$ and converges strongly to some $\lambda_\e$. This completes the proof.
\end{proof}

\begin{lemma}\label{norm_control}
For $$\lambda_p>\left(\frac{p-2}{p}\cdot\frac{2\mu}{\mu-2}\right)^{\frac{p-2}{2}}\mathcal{C}_p^{\frac{p}{2}}$$ and $u_\e$ satisfying \eqref{constreq} with $\beta_\e(u_\e)=0$, we have $\limsup_{\e\rg0}\|\na u_\varepsilon \|_2^2<\frac{1}{2}$.
\end{lemma}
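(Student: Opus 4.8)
The plan is to run the same computation as in Step~1 of the proof of Proposition~\ref{PSC}, the new point being that the Lagrange--multiplier term in \eqref{constreq} is annihilated when one tests the equation against $u_\e$ itself. Recall from Lemma~\ref{constrp} that the function $u_\e$ in question is bounded in $H^1(\RT)$, satisfies $\beta_\e(u_\e)=0$, and realizes $\tilde I_\e(u_\e)=b_\e$. Multiplying \eqref{constreq} by $u_\e$ and integrating, the extra term is $\lambda_\e\cdot\int_{\RT}h_\e(x)u_\e^2\,\ud x=\lambda_\e\cdot\bigl(\beta_\e(u_\e)\int_{\RT}u_\e^2\,\ud x\bigr)=0$, so in fact $\langle\tilde I_\e'(u_\e),u_\e\rangle=0$; that is, for the present purpose $u_\e$ behaves like an unconstrained critical point of $\tilde I_\e$ at level $b_\e$.

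With this identity in hand I would form $\mu\tilde I_\e(u_\e)-\langle\tilde I_\e'(u_\e),u_\e\rangle=\mu b_\e$ and estimate the right-hand side exactly as in Proposition~\ref{PSC}: hypothesis $(f_3)$ gives $\chi_\e(x)\bigl(\mu F(u_\e)-f(u_\e)u_\e\bigr)\le 0$, while $\tilde F(t)\le\tfrac a2 t^2$ together with $\tilde f(t)t\ge0$ gives $(1-\chi_\e(x))\bigl(\mu\tilde F(u_\e)-\tilde f(u_\e)u_\e\bigr)\le\tfrac{\mu a}{2}(1-\chi_\e(x))u_\e^2$. Using in addition $V(\e x)\ge\alpha_1$ and $0\le 1-\chi_\e\le1$, one arrives at
\[
\mu b_\e\ \ge\ \Bigl(\tfrac{\mu}{2}-1\Bigr)\|\nabla u_\e\|_2^2+\Bigl[\Bigl(\tfrac{\mu}{2}-1\Bigr)\alpha_1-\tfrac{\mu a}{2}\Bigr]\int_{\RT}(1-\chi_\e)u_\e^2\,\ud x .
\]
Since $a<(1-\tfrac{2}{\mu})\alpha_1$, the bracketed coefficient is nonnegative, and the displayed inequality collapses to $\|\nabla u_\e\|_2^2\le\frac{2\mu b_\e}{\mu-2}$.

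It then remains only to pass to the limit $\e\to0$. As shown above, $b_\e\le m_\e$ for all sufficiently small $\e$, and Proposition~\ref{estima} gives $\limsup_{\e\to0}m_\e\le m$, hence $\limsup_{\e\to0}b_\e\le m$. Since $m=m_1<\tfrac12\bigl(\tfrac12-\tfrac1\mu\bigr)$ by Lemma~\ref{monmk}, we conclude
\[
\limsup_{\e\to0}\|\nabla u_\e\|_2^2\ \le\ \frac{2\mu}{\mu-2}\,m\ <\ \frac{2\mu}{\mu-2}\cdot\frac12\Bigl(\frac12-\frac1\mu\Bigr)\ =\ \frac12 ,
\]
which is the assertion. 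I do not expect a real obstacle here: the one thing that must be observed is that $\beta_\e(u_\e)=0$ kills the multiplier term, after which the proof is just the energy/level bookkeeping already performed for Proposition~\ref{PSC} combined with the estimates $b_\e\le m_\e$ and $m<\tfrac12(\tfrac12-\tfrac1\mu)$. If one wishes to be scrupulous, the sign property $u_\e\ge0$ needed to apply $(f_3)$ follows as before from the maximum principle.
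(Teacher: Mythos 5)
Your proof is correct and follows essentially the same route as the paper's (very terse) argument: the condition on $\lambda_p$ gives $m<\frac{1}{2}\bigl(\frac{1}{2}-\frac{1}{\mu}\bigr)$, the chain $b_\e\le m_\e$ and $\limsup_{\e\to0}m_\e\le m$ controls the level, and the gradient bound comes from the $\mu\tilde I_\e(u_\e)-\langle\tilde I_\e'(u_\e),u_\e\rangle$ computation of Proposition \ref{PSC}. Your explicit observation that $\beta_\e(u_\e)=0$ annihilates the multiplier term when testing \eqref{constreq} against $u_\e$ is exactly the point the paper leaves implicit, and the rest of the bookkeeping is carried out correctly.
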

\begin{proof}
For $$\lambda_p>\left(\frac{p-2}{p}\cdot\frac{2\mu}{\mu-2}\right)^{\frac{p-2}{2}}\mathcal{C}_p^{\frac{p}{2}},$$ $m<\frac{1}{2}\left(\frac{1}{2}-\frac{1}{\mu}\right)$. By Proposition 3.2 and Lemma 3.1, $m_\e<\frac{1}{2}\left(\frac{1}{2}-\frac{1}{\mu}\right)\alpha_1$ for $\varepsilon$ small. Thus, similar as in Proposition 2.1, we have $\limsup_{\e\rg0}\|\na u_\varepsilon \|_2^2<\frac{1}{2}$.

\end{proof}

\bl\lab{l42}
$
\hbox{$u_\e\chi_{B_2^\e}\not\rg 0$ in $L^2(\R^2)$ as $\e\rg 0$.}
$
\el

\bp To the contrary, we assume that $u_\e\chi_{B_2^\e}\rg 0$ in
$L^2(\R^2)$ as $\e\rg 0$, then by the boundedness of $\{u_\e\}$ in
$H^1(\R^2)$ and the Sobolev interpolation inequality,
\be\lab{lq}
u_\e\chi_{B_2^\e}\rg 0\,\, \mbox{in}\,\, L^q(\R^2)\,\, \mbox{for any}\,\, q\ge2.
\ee
Recalling that $\beta(u_\e)=0$, we know $u_\e$ is nonnegative.
By the definition of $\ti{f}$, $|\ti{f}(u_\e)|\le au_\e$ which implies from \re{lq} that $\int_{B_2^\e\setminus B_1^\e}(1-\chi(\e x))\ti{f}(u_\e)u_\e\rg0$ as $\e\rg0$. Then
\begin{align*}
\int_{\R^2}(|\na u_\e|^2+V_\e(x)u_\e^2)\,\ud x&=\int_{B_2^\e}\chi_\e(x)f(u_\e)u_\e\,\ud x+\int_{\R^2\setminus B_2^\e}\ti{f}(u_\e)u_\e\,\ud x+o_\e(1)\\
&\le\int_{B_2^\e}\chi_\e(x)f(u_\e)u_\e\,\ud x+a\int_{\R^2\setminus B_2^\e}|u_\e|^2\,\ud x+o_\e(1),
\end{align*}
which implies that
\be\lab{yy}
\int_{\R^2}(|\na u_\e|^2+(V_\e(x)-a)u_\e^2)\,\ud x\le\int_{B_2^\e}\chi_\e(x)f(u_\e)u_\e\,\ud x+o_\e(1).
\ee
Now, we claim that $\lim_{\e\rg
0}\int_{B_2^\e}\chi_\e(x)f(u_\e)u_\e\,\ud x=0$. In fact, by $(f_1)$-$(f_2)$,  for any given $\gamma>4\pi$, there exists $C_\gamma>0$ such that
$
f(t)\leq C_\gamma t+t(e^{\gamma t^2}-1), \; \textrm{for any}\; t\ge0.
$
By H\"older's inequality,
\begin{align*}
\left| \ds \int_{B_2^\e}\chi_\e(x)f(u_\e)u_\e\,\ud x\right|&\leq \ds C_\gamma\int_{B_2^\e}u_\e^2\,\ud x+\left(\int_{B_2^\e}u_\e^{2q'}\,\ud x\right)^{\frac{1}{q'}}\left(\int_{B_2^\e}(e^{q\gamma u_\e^2}-1)\,\ud x\right)^{\frac{1}{q}},
\end{align*}
where $q,q'>1$ with $1/q+1/q'=1$, $\gamma>4\pi$ (close to $4\pi$) and $q>1$ (close to $1$) such that $q\gamma\|\na u_\e\|_2^2<4\pi$ for $\e$ small. Then the claim follows from Lemma \ref{MTI} and \re{lq}.

Finally, by \re{yy} and the claim above, we have $\|u_\e\|\rg0$ as $\e\rg0$, which contradicts the fact that $m_\e\ge b_\e\ge m_{\al_1}>0$. This concludes
the proof. \ep

\begin{lemma}\label{le:L4c}
$\|u_\e\|_{L^2((B_4^\e)^c)}\to 0$ as $\e\rightarrow 0$.
\end{lemma}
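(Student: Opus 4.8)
The plan is to exploit the concentration information already in hand together with the equation \eqref{constreq} and a Pohozaev-type comparison with the limit functional. First I would record what Lemmas~\ref{norm_control} and \ref{l42} give us: $\limsup_{\e\to0}\|\nabla u_\e\|_2^2<1/2$, so in particular $\{u_\e\}$ is bounded in $H^1(\RT)$ and $\alpha\|\nabla u_\e\|_2^2<4\pi$ for all $\alpha$ slightly above $4\pi$ once $\e$ is small; and $u_\e\chi_{B_2^\e}\not\to0$ in $L^2$, so there is a point $y_\e$ (which we may as well take in $B_2^\e$) and radii $r,\nu>0$ with $\int_{B(y_\e,r)}u_\e^2\,\ud x\ge\nu>0$. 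Translating $v_\e(x):=u_\e(x+y_\e)$, the boundedness gives $v_\e\rightharpoonup v$ in $H^1(\RT)$ with $v\not\equiv0$, and since $\e y_\e\to$ some point in $\overline{B_2}$ (a subsequence), $V(\e(x+y_\e))\to V_0$ for some $V_0\in[\alpha_1,\alpha_2]$ locally uniformly, while $\chi_\e(\cdot+y_\e)\to\chi_0\in\{0,1\}$ a.e.\ and $h_\e(\cdot+y_\e)\to 0$ outside a bounded set; passing to the limit in the translated equation, $v$ solves a limit equation $-\Delta v+V_0 v=\tilde g(v)$ where $\tilde g$ is either $f$, $\tilde f$, or a convex combination, and in any case $\tilde g(t)t\ge\tilde f(t)t\ge\dots$, so $v$ is a nontrivial solution with $\Phi_{V_0}$-type energy bounded below by $m_{\alpha_1}$.

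The heart of the argument is then an energy splitting. Using $\langle\tilde I_\e'(u_\e)-\lambda_\e\cdot h_\e u_\e/\|u_\e\|_2^2,u_\e\rangle=0$ and the functional identity $\tilde I_\e(u_\e)=b_\e\le m_\e\to m$, together with Brezis--Lieb-type splitting for the $H^1$ norm and the compactness Lemmas~\ref{smon} and the Strauss-lemma arguments used in Proposition~\ref{PSC}, I would show
\[
m\ge\liminf_{\e\to0}b_\e\ge \Phi_{V_0}(v)+\liminf_{\e\to0}\Big[\tfrac12\|u_\e-v(\cdot-y_\e)\|^2-\int_{\RT}\big(G_\e(x,u_\e)-F(v(\cdot-y_\e))\big)\,\ud x\Big].
\]
Because $V_0\ge1$ and $m$ is strictly increasing (Lemma~\ref{monmk}), $\Phi_{V_0}(v)\ge m_{V_0}\ge m$, which forces the bracketed remainder to tend to $0$; combined with the lower bound $\tilde I_\e\ge\Phi_{\alpha_1}$ on the far region and $(f_3)$, this squeezes out all the mass of $u_\e$ near $y_\e$. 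Since $y_\e\in B_2^\e$ and $R_4>R_2$ with the distance $R_4-R_2$ fixed (independent of $\e$ but the rescaled gap $\e^{-1}(R_4-R_2)\to\infty$), the exponential decay of $v$ together with the vanishing of the remainder yields $\|u_\e\|_{L^2((B_4^\e)^c)}\to0$: any mass escaping to $(B_4^\e)^c$ would either contribute an additional nontrivial limit profile (raising the energy past $m$, a contradiction) or be controlled by $C/R$ estimates as in Step~2 of Proposition~\ref{PSC} with $R=\e^{-1}R_4\to\infty$.

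The main obstacle I anticipate is making the ``no second bubble / no escaping mass'' dichotomy rigorous in the Trudinger--Moser setting: unlike the Sobolev case one cannot invoke a clean concentration-compactness splitting for the nonlinear term, so the compactness has to be routed through Lemma~\ref{smon} and Strauss's lemma, which require the uniform control $\|\nabla u_\e\|_2^2<1/2$ that Lemma~\ref{norm_control} barely provides—so keeping every constant strictly below the critical threshold $4\pi$ through all the translations and cut-offs is the delicate point. A secondary technical nuisance is handling the Lagrange-multiplier term $\lambda_\e\cdot h_\e(x)u_\e$: since $h_\e$ is supported in $B_3^\e$ and $\lambda_\e$ is bounded (Lemma~\ref{constrp}), this term is harmless on $(B_4^\e)^c$, but one must carry it along carefully in the energy identity, testing \eqref{constreq} against $\phi_R u_\e$ with $\phi_R$ supported outside $B_4^\e$ so that the multiplier term drops out exactly as in the $\phi_R h_\e=0$ choice of Lemma~\ref{constrp}.
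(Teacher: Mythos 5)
There is a genuine gap, and it lies in the main route you propose. Your energy-splitting argument needs two inputs that are not available at this point of the paper and in fact cannot be made available. First, the chain $m\ge\liminf_{\e\to0}b_\e\ge\Phi_{V_0}(v)\ge m_{V_0}\ge m$ requires $V_0=V(\bar y)\ge 1$ for the concentration point $\bar y\in\overline{B_2}$; but near a saddle or maximum point of $V$ the potential dips \emph{below} $V(0)=1$ in the directions of $E$, so $m_{V_0}<m$ is perfectly possible and the squeeze does not close. Excluding concentration at such points is precisely the role of the barycenter constraint $\beta_\e(u_\e)=0$ in the proof of Proposition \ref{fund}, and that analysis is not yet in hand. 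Second, the argument is circular: the lower bound $\liminf_\e b_\e\ge m$ \emph{is} Proposition \ref{fund}, and the rigorous ``no escaping mass / no second bubble'' dichotomy you invoke is the profile decomposition of Proposition \ref{CCLion} --- which itself uses Lemma \ref{le:L4c} to place the concentration points $y_\e^i$ inside $B_5^\e$. So the heavy machinery you reach for sits downstream of the statement you are trying to prove.

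The correct proof is the elementary cut-off computation you relegate to a ``secondary technical nuisance'' in your last sentence. Take $\phi_\e$ with $\phi_\e=0$ on $B_3^\e$, $\phi_\e=1$ on $(B_4^\e)^c$, $0\le\phi_\e\le1$ and $|\nabla\phi_\e|\le C\e$ (the transition annulus has width of order $\e^{-1}$), and test \eqref{constreq} against $\phi_\e^2u_\e$. The multiplier term vanishes because $\phi_\e h_\e=0$, and on $\operatorname{supp}\phi_\e$ one has $\chi_\e=0$, hence $g_\e(x,u_\e)u_\e=\tilde f(u_\e)u_\e\le a\,u_\e^2$ with $a<(1-2/\mu)\alpha_1<\alpha_1\le V(\e x)$. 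The right-hand side is therefore absorbed into the left, and the only remainder is the cross term $2\int u_\e\phi_\e\nabla u_\e\cdot\nabla\phi_\e=O(\e)$ by Cauchy--Schwarz and the $H^1$-boundedness of $\{u_\e\}$. This gives $\int(|\nabla u_\e|^2+u_\e^2)\phi_\e^2\le C\e\to0$, which is the claim. No Trudinger--Moser estimates, limit profiles, or energy comparisons are needed; the entire point of the truncation $\tilde f$ is to make this region-by-region absorption possible.
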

\begin{proof}
Let $\phi_\e :\RT\to \R$ be a smooth function such that
\[
\phi_\e(x)= \left\{
\begin{array}{ll}
0, & \hbox{in }B_3^\e,
\\
1, & \hbox{in }(B_4^\e)^c
\end{array}
\right.
\]
and with $0\leq  \phi_\e \leq 1$ and $|\nabla \phi_\e|\leq C\e$. By Lemma \ref{constrp} and $\phi_\e h_\e=0$, we have that
$
\langle I_\e'(u_\e),\phi_\e^2 u_\e\rangle=0.$
Since $(\chi_\e)$ and $(\phi_\e)$ have disjoint support, we get using the definition of $g_\e$,
\[
\int_{\RT} (|\nabla u_\e|^2+V(\e x)u_\e^2)\phi_\e^2\,\ud x +2\int_{\RT} u_\e \phi_\e \nabla
u_\e \cdot \nabla \phi_\e\,\ud x
\leq  \int_{\RT} a |u_\e \phi_\e|^2\,\ud x.
\]
Using H\"{o}lder's inequality, for some $C>0$,
\begin{equation}\label{tefm}
\int_{\RT}(|\nabla u_\e|^2+u_\e^2)\phi_\e^2\,\ud x \leq C\e.
\end{equation}
Observe that
\begin{align}\label{thodi}
\int_{\RT} |\nabla (u_\e \phi_\e)|^2\,\ud x&
\leq 2\int_{\RT} (|\nabla u_\e|^2 \phi_\e^2+u_\e^2|\nabla \phi_\e|^2)\,\ud x\leq 2\int_{\RT} |\nabla u_\e|^2 \phi_\e^2\,\ud x+C\varepsilon.
\end{align}
By the Sobolev embedding of $H^1(\mathbb R^2)\hookrightarrow L^2 (\mathbb R^2)$ and \eqref{thodi}-\eqref{tefm},
\begin{align*}
 a\int_{\RT} |u_\e \phi_\e|^2\,\ud x
 &\leq a\int_{\RT} (|\nabla (u_\e\phi_\e)|^2+ |u_\e \phi_\e|^2)\,\ud x\\
 &\leq 2a \int_{\RT} |\nabla u_\e|^2 \phi_\e^2\,\ud x+C\e+\int_{\RT} |u_\e|^2\phi_\e^2\,\ud x\\
 &\leq C\int_{\RT}(|\nabla u_\e|^2+u_\e^2)\phi_\e^2\,\ud x+C\e\rightarrow 0 \;\; \textrm{as}\;\; \e\rightarrow 0,
\end{align*}
which yields that $\|u_\e\|_{L^2((B_4^\e)^c)}\to 0$ as $\e\rightarrow 0$.
\end{proof}
\begin{lemma}\label{asylam}
$\lambda_\e=O(\e)$.
\end{lemma}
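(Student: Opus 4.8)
The plan is to derive a Pohozaev‑type identity by testing the constrained equation \eqref{constreq} against the directional derivatives of $u_\e$ along $E$. Fix an orthonormal basis $e_1,\dots,e_\kappa$ of $E$ ($\kappa=\dim E\le 2$), write $\partial_j u:=\langle\nabla u,e_j\rangle$, and choose a cut‑off $\zeta\in C_c^\infty(B_5)$ with $\zeta\equiv 1$ on $B_4$, $\zeta_\e(x):=\zeta(\e x)$. By elliptic regularity $u_\e$ is smooth with enough decay that $\zeta_\e\partial_j u_\e\in H^1(\RT)$, so it is admissible in \eqref{constreq}. Writing $\lambda_\e=\sum_i\lambda_\e^i e_i$ and using that $\zeta_\e\equiv 1$ on $\operatorname{supp}h_\e=B_3^\e$, an integration by parts on $B_3^\e$ turns the Lagrange‑multiplier term into $-\tfrac{\lambda_\e^j}{2}\int_{B_3^\e}u_\e^2\,\ud x$ plus a boundary integral, and the test identity becomes
\[
\frac{\lambda_\e^j}{2}\int_{B_3^\e}u_\e^2\,\ud x=\frac12\sum_i\lambda_\e^i\int_{\partial B_3^\e}\langle x,e_i\rangle\langle\nu,e_j\rangle\,u_\e^2\,\ud\sigma+R_\e^j,\qquad j=1,\dots,\kappa,
\]
where $\nu$ is the outer unit normal and $R_\e^j$ gathers the Laplacian, potential and nonlinear contributions.

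The second step is to check $R_\e^j=O(\e)$, which is routine once the derivatives are moved onto the $\e$‑dependent weights. The Laplacian part becomes $-\tfrac12\int(\partial_j\zeta_\e)|\nabla u_\e|^2+\int(\partial_j u_\e)\,\nabla u_\e\!\cdot\!\nabla\zeta_\e$, hence $O(\e)$ since $\nabla\zeta_\e=O(\e)$ is supported in $B_5^\e\setminus B_4^\e$ and $\int_{(B_4^\e)^c}(|\nabla u_\e|^2+u_\e^2)=O(\e)$ by (the proof of) Lemma \ref{le:L4c}. The potential part equals $-\tfrac\e2\int(\partial_j V)(\e x)\zeta_\e u_\e^2-\tfrac12\int V(\e x)(\partial_j\zeta_\e)u_\e^2$, which is $O(\e)$ because $\nabla V$ is bounded on $B_5$ (indeed $\nabla V(0)=0$ under each of (V1)–(V3)) and $\{u_\e\}$ is bounded in $L^2$. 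Using $g_\e(x,u_\e)\partial_j u_\e=\partial_j[G_\e(x,u_\e)]-(\partial_j\chi_\e)\big(F(u_\e)-\tilde F(u_\e)\big)$, the nonlinear part equals $-\int(\partial_j\zeta_\e)G_\e(x,u_\e)-\int(\partial_j\chi_\e)\big(F(u_\e)-\tilde F(u_\e)\big)$; the first piece is $O(\e)$ (on $\operatorname{supp}\nabla\zeta_\e$ one has $\chi_\e=0$, so $G_\e=\tilde F(u_\e)\le\tfrac a2u_\e^2$), and the second is $O(\e)$ since $\nabla\chi_\e=O(\e)$ and, by Lemma \ref{MTI} together with $\limsup_{\e\to0}\|\nabla u_\e\|_2^2<\tfrac12$ from Lemma \ref{norm_control}, the families $\{F(u_\e)\}$ and $\{\tilde F(u_\e)\}$ are bounded in $L^1(\RT)$.

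Finally I would invoke the lower bound $\int_{B_3^\e}u_\e^2\ge\int_{B_2^\e}u_\e^2\ge c_0>0$ for all small $\e$, which follows from Lemma \ref{l42} (its proof, run along subsequences, gives the quantitative statement). Together with $|\langle x,e_i\rangle|\le R_3/\e$ on $\partial B_3^\e$, summing the identity over $j$ yields
\[
|\lambda_\e|\Bigl(\tfrac{c_0}{2}-\tfrac{C}{\e}\int_{\partial B_3^\e}u_\e^2\,\ud\sigma\Bigr)\le C\e ,
\]
so everything reduces to proving $\e^{-1}\int_{\partial B_3^\e}u_\e^2\,\ud\sigma\to 0$, and this is the main obstacle. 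I expect to obtain it from a decay estimate: on the annulus $B_5^\e\setminus B_2^\e$ one has $g_\e(x,u_\e)/u_\e=\tilde f(u_\e)/u_\e\le a<\alpha_1\le V(\e x)$, so a Caccioppoli argument (testing \eqref{constreq} against $\phi_\e^2u_\e$ with $\phi_\e$ a cut‑off supported in $(B_2^\e)^c$, as in Lemma \ref{le:L4c}) gives $\int_{B_5^\e\setminus B_3^\e}(|\nabla u_\e|^2+u_\e^2)=O(\e^2)$, and a trace inequality on $B_4^\e\setminus B_3^\e$ then gives $\int_{\partial B_3^\e}u_\e^2\,\ud\sigma=O(\e^2)=o(\e)$. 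The genuinely delicate point is the absorption of the $h_\e$‑contribution in this Caccioppoli estimate; there one exploits that $|\pi_E|\le R_3/\e$ on $B_3^\e$, that the radii $R_i$ are still at our disposal, and a short bootstrap (or, alternatively, the uniform $L^\infty$‑bound on $u_\e$ — standard Moser iteration under $\|\nabla u_\e\|_2^2<\tfrac12$ — together with a comparison argument yielding exponential decay away from the concentration set).
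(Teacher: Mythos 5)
Your overall strategy --- testing \eqref{constreq} against $\zeta_\e\,\partial_j u_\e$ to produce a translation identity in which every term except the Lagrange-multiplier term is $O(\e)$ --- is the right one: it is what \cite[Lemma 4.4]{MR2900480} does, and the paper itself gives no details beyond citing that lemma. Your treatment of the remainder $R_\e^j$ is essentially correct (the $\nabla\zeta_\e$, $\nabla V(\e\,\cdot)$ and $\nabla\chi_\e$ contributions are all $O(\e)$, using the $H^1$-boundedness of $\{u_\e\}$, the $L^1$-bound on $F(u_\e)$ coming from Lemma \ref{MTI} together with Lemma \ref{norm_control}, and the estimate \eqref{tefm}), and the lower bound $\int_{B_3^\e}u_\e^2\,\ud x\ge c_0>0$ is indeed what Lemma \ref{l42} supplies, provided you set the whole argument up by contradiction along a sequence $\e_k\to0$ with $|\lambda_{\e_k}|/\e_k\to\infty$ and extract subsequences there.

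The gap is exactly where you locate it, and your proposed repair does not close it. To control $\e^{-1}\int_{\partial B_3^\e}u_\e^2\,\ud\sigma$ you invoke a Caccioppoli estimate on a region meeting $B_3^\e$, but any such estimate must absorb the multiplier term $\int\lambda_\e\cdot h_\e(x)\,u_\e^2\phi_\e^2\,\ud x$ over $B_3^\e\setminus B_2^\e$, where $|h_\e|$ is of order $\e^{-1}$; absorbing it requires an a priori bound on $|\lambda_\e|$, which is precisely what the lemma is meant to establish. (Lemma \ref{le:L4c} escapes this only because its cut-off vanishes on all of $B_3^\e$, hence on $\operatorname{supp}h_\e$ --- which is also why it yields information only on $(B_4^\e)^c$ and says nothing about $\partial B_3^\e$ or the annulus $B_4^\e\setminus B_3^\e$.) The alternative you mention --- a uniform $L^\infty$ bound plus exponential decay --- is likewise circular in the paper's architecture: those facts are obtained in Section 7 from Proposition \ref{hola}, downstream of this lemma, and a direct Moser iteration of \eqref{constreq} runs into the same unbounded coefficient $|\lambda_\e|\,|h_\e|$. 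Note also that even a ``generic radius'' bound $\int_{\partial B_3^\e}u_\e^2\,\ud\sigma=O(\e)$ (which the co-area formula gives for some radius, not necessarily $R_3/\e$) would only make the surface term comparable to, not smaller than, the main term $\tfrac{c_0}{2}|\lambda_\e|$. So the decisive estimate $\int_{\partial B_3^\e}u_\e^2\,\ud\sigma=o(\e)$, or some other mechanism neutralizing the surface contribution created by the jump of $h_\e$ across $\partial B_3^\e$, is asserted but not proved: the proof is incomplete at its one genuinely nontrivial step.
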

\begin{proof}
The proof follows the same steps as in \cite[Lemma 4.4]{MR2900480}.
\end{proof}
From Lemma \ref{asylam}, we can suppose that there exists $\bar \lambda\in E$ such
that
\[
\bar \lambda=\lim_{\e\to 0} \frac{\lambda_\e}{\e}.
\]
Let
\[
H_\e=\left\{x\in \RT \mid \bar \lambda \cdot x \leq
\frac{\alpha_1}{2\e}\right\}.
\]
\begin{lemma}\label{le:L2LpH}
$u_\e \chi_{H_\e} \nrightarrow 0$ in $L^2(\R^2)$ as $\e\rg 0$.
\end{lemma}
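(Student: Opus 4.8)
The plan is to argue by contradiction: suppose $u_\e \chi_{H_\e} \to 0$ in $L^2(\R^2)$ as $\e \to 0$. By the boundedness of $\{u_\e\}$ in $H^1(\R^2)$ together with the Sobolev interpolation inequality, this upgrades to $u_\e \chi_{H_\e} \to 0$ in $L^q(\R^2)$ for every $q \ge 2$, exactly as in the proof of Lemma \ref{l42}. The idea is then to test the equation \eqref{constreq} against $u_\e$ restricted (via a suitable cut-off) to the half-space complement $H_\e^c = \{\bar\lambda\cdot x > \alpha_1/(2\e)\}$, and to show that on $H_\e^c$ the contribution of the Lagrange multiplier term $\lambda_\e \cdot h_\e(x) u_\e^2$ can be absorbed. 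Indeed, on $H_\e^c$ we have $\lambda_\e \cdot x = \e (\lambda_\e/\e)\cdot x$, and since $\lambda_\e/\e \to \bar\lambda$ by Lemma \ref{asylam}, while $h_\e(x) = \pi_E(x)\chi_{B_3^\e}(x)$ forces $|x| \le R_3^\e = R_3/\e$, the term $\lambda_\e \cdot h_\e(x)$ is $O(1)$; more importantly, on $H_\e^c \cap B_3^\e$ the sign/size of $\bar\lambda\cdot x$ is controlled so that $\lambda_\e \cdot h_\e(x) - V(\e x)$ stays bounded above by something like $-\alpha_1/2 < 0$, using $V(\e x) \ge \alpha_1$.

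The key steps, in order, would be: (1) assume the contradiction hypothesis and pass to $L^q$ convergence of $u_\e\chi_{H_\e}$ for all $q \ge 2$; (2) choose a cut-off $\psi_\e$ adapted to the half-space, roughly $\psi_\e = 0$ on a neighbourhood inside $H_\e$ and $\psi_\e = 1$ on $H_\e^c$ (or work directly, splitting the integrals over $H_\e$ and $H_\e^c$), and plug $\psi_\e^2 u_\e$ into \eqref{constreq}; (3) on the region $H_\e$ use the $L^q$-smallness from step (1) to kill the gradient, potential, nonlinear and multiplier terms coming from that part; (4) on $H_\e^c$ estimate $\int \chi_\e(x) f(u_\e) u_\e$ exactly as in Lemma \ref{l42} — using $(f_1)$–$(f_2)$, Hölder, and Lemma \ref{MTI} with $q\gamma\|\nabla u_\e\|_2^2 < 4\pi$ (valid by Lemma \ref{norm_control}) — to get that this term is $o_\e(1)$ since it is supported in $B_2^\e$ where we would also need $u_\e \to 0$ in $L^q$; (5) combine with the multiplier bound $\lambda_\e\cdot h_\e(x) - V(\e x) \le -\alpha_1/2$ on $H_\e^c$ to conclude $\int_{H_\e^c}(|\nabla u_\e|^2 + u_\e^2)\,\ud x \to 0$; (6) together with step (1), deduce $\|u_\e\| \to 0$, contradicting $m_\e \ge b_\e \ge m_{\alpha_1} > 0$ as in Lemma \ref{l42}.

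There is a subtlety to resolve before step (4) can run: the nonlinearity $f(u_\e)u_\e$ on $B_2^\e$ is only small if $u_\e$ is small there, which is \emph{not} given by the hypothesis $u_\e\chi_{H_\e}\to 0$ unless $B_2^\e \subset H_\e$ for $\e$ small. This should in fact be the case: $B_2^\e = B(0, R_2/\e)$ grows, but on $B_2^\e$ one has $|\bar\lambda \cdot x| \le |\bar\lambda| R_2/\e$, which is comparable to, not smaller than, $\alpha_1/(2\e)$ — so one needs $R_2$ (already a free small constant, chosen later) taken small enough that $|\bar\lambda| R_2 < \alpha_1/2$, forcing $B_2^\e \subset H_\e$. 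This is the main obstacle, and it is really a matter of bookkeeping the order of quantifiers: $\bar\lambda$ depends only on $V$ (through Lemma \ref{asylam}), not on the radii $R_i$, so one is free to shrink $R_2$ afterwards; once $B_2^\e \subset H_\e$, the nonlinear term on $H_\e^c$ vanishes identically (there $\chi_\e \equiv 0$), the estimate of step (4) becomes trivial, and only the coercive/multiplier estimate of step (5) remains. Alternatively, if that inclusion is awkward to arrange, one repeats the Trudinger–Moser estimate of Lemma \ref{l42} verbatim on $B_2^\e \cap H_\e$, which is covered by step (1). The remaining steps are routine integration-by-parts estimates mirroring Lemmas \ref{l42} and \ref{le:L4c}.
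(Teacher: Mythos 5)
There is a genuine gap here, and it stems from a sign error on $H_\e^c$. On $H_\e^c\cap B_3^\e$ one has $\bar\lambda\cdot x>\alpha_1/(2\e)$, and since $\lambda_\e\in E$ gives $\lambda_\e\cdot h_\e(x)=\lambda_\e\cdot x$ there, the multiplier term satisfies $\lambda_\e\cdot h_\e(x)=\e\,(\lambda_\e/\e)\cdot x\ge \alpha_1/2-o(1)$ and can be as large as $|\bar\lambda|R_3+o(1)$. So $\lambda_\e\cdot h_\e(x)-V(\e x)$ is \emph{not} bounded above by $-\alpha_1/2$ on $H_\e^c$; on the contrary, $V(\e x)-\lambda_\e\cdot h_\e(x)$ may well be negative there, and the quadratic form you need to be coercive in step (5) need not be. (Coercivity of $-\Delta+V(\e x)-\lambda_\e\cdot h_\e$ holds on $H_\e$, where $\lambda_\e\cdot h_\e\le \alpha_1/2+o(1)<V$ --- that is precisely why $H_\e$ is defined this way --- and fails in general on $H_\e^c\cap B_3^\e$.) Your proposed repair, shrinking $R_2$ or $R_3$ after $\bar\lambda$ is known, is circular: $\lambda_\e$ is the Lagrange multiplier of the constrained critical point of $\tilde I_\e$ under $\beta_\e(u)=0$, and both $g_\e$ and $h_\e$ depend on the radii $R_1,R_2,R_3$, so $\bar\lambda$ is not independent of the $R_i$. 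Finally, even granting steps (1)--(5), you would only control $\|u_\e\|_{H^1(H_\e^c)}$ and $\|u_\e\|_{L^2(H_\e)}$, not the full norm, so the contradiction with $b_\e\ge m_{\alpha_1}>0$ in step (6) is not yet closed.

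The missing idea is that the constraint $\beta_\e(u_\e)=0$ should be exploited directly, with no testing of the equation at all. Set $H'_\e=\{x:\bar\lambda\cdot x\le \alpha_1/(3\e)\}\subset H_\e$ and suppose $\int_{H'_\e}u_\e^2\,\ud x\to0$. Since $\bar\lambda\in E$,
\[
0=\int_{\R^2}\bar\lambda\cdot h_\e(x)\,u_\e^2\,\ud x=\int_{(H'_\e)^c\cap B_3^\e}\bar\lambda\cdot x\;u_\e^2\,\ud x+\int_{H'_\e\cap B_3^\e}\bar\lambda\cdot x\;u_\e^2\,\ud x .
\]
On $(H'_\e)^c$ the weight exceeds $\alpha_1/(3\e)$, while on $H'_\e\cap B_3^\e$ it is at most $|\bar\lambda|R_3/\e$ in absolute value; hence
\[
\frac{\alpha_1}{3\e}\int_{(H'_\e)^c\cap B_3^\e}u_\e^2\,\ud x\le \frac{|\bar\lambda|R_3}{\e}\int_{H'_\e\cap B_3^\e}u_\e^2\,\ud x\to0 ,
\]
so $u_\e\chi_{B_3^\e}\to0$ in $L^2(\R^2)$, contradicting Lemma \ref{l42}. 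This two-line balance of mass across $\partial H'_\e$, forced by the barycenter constraint, is the whole proof; the truncation-and-testing machinery of Lemmas \ref{l42} and \ref{le:L4c} is not what is needed here.
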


\begin{proof}
Let $H'_\e=\left\{x\in \RT \mid \bar \lambda  \cdot x\leq
\frac{\alpha_1}{3 \e}\right\}\subset H_\e$.  We show that
\begin{equation*}
\int_{H'_\e} u_\e^2\,\ud x \nrightarrow 0, \ \hbox{ as }\e\to 0.
\end{equation*}
Suppose by contradiction that
\begin{equation}\label{eq:absH}
\int_{H'_\e} u_\e^2\,\ud x \to 0, \ \hbox{ as }\e\to 0.
\end{equation}
Since $\beta_\e(u_\e)=0$ and $\bar \lambda\in E$, we have
\[
0= \int_{\RT} \bar{\lambda} \cdot h_\e(x) u_\e^2\,\ud x =\int_{(H'_\e)^c \cap
B_3^\e}\!\!\! \!\!\! \bar{\lambda} \cdot x \ u_\e^2 \,\ud x+\int_{H'_\e
\cap B_3^\e} \!\!\! \!\!\bar{\lambda} \cdot x \  u_\e^2\,\ud x \geq
\frac{\alpha_1}{3 \e}\int_{(H'_\e)^c \cap B_3^\e} \!\!\!\!\!\!
u_\e^2 \,\ud x+\int_{H'_\e \cap B_3^\e} \!\!\!\!\! \bar{\lambda} \cdot x \
u_\e^2\,\ud x.
\]
Therefore
\[
\frac{\alpha_1}{3 \e} \int_{(H'_\e)^c \cap B_3^\e}u_\e^2\,\ud x \leq
\left|\int_{H'_\e \cap B_3^\e} \bar{\lambda} \cdot x \
u_\e^2\,\ud x\right| \leq \frac{|\bar \lambda|R_3}{\e}\int_{H'_\e \cap
B_3^\e}u_\e^2\,\ud x
\]
and
\[
\int_{(H'_\e)^c \cap B_3^\e}u_\e^2\,\ud x \to 0, \ \hbox{ as }\e\to
0.
\]
This last formula, together with (\ref{eq:absH}), implies that
$u_\e \chi_{B_3^\e}\to 0$ in $L^2(\RT)$ but we get a
contradiction with Lemma \ref{l42} and so
the lemma is proved.
\end{proof}
Now we prove the compactness result of Lions type for the exponential critical growth. The idea is similar as in C. O. Alves, J. M. do \'O and O. H. Miyagaki  \cite{MR2036791}.

\begin{lemma} \label{lionsadm}
Let $(u_\varepsilon) \subset H^{1}(\mathbb{R}^{2})$ obtained in Lemma \ref{constrp}. If there exists $R>0$ such that
$$
\lim_{\varepsilon \to 0}\sup_{z \in \mathbb{R}^{2}}\int_{B_R(z)}|u_\varepsilon|^{2}\,\ud x=0,
$$
then
$$
\lim_{\varepsilon \to 0}\int_{\mathbb{R}^{2}}G_\varepsilon(x, u_\varepsilon)\,\ud x=\lim_{\varepsilon \to 0}\int_{\mathbb{R}^{2}}g_\varepsilon(x, u_\varepsilon)u_\varepsilon\,\ud x=0.
$$
\end{lemma}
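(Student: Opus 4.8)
The plan is to combine a Lions-type vanishing argument with the exponential growth control from $(f_1)$--$(f_2)$ and the uniform Moser--Trudinger bound made available by Lemma \ref{norm_control}. First I would extract from the hypothesis the only fact that is actually used: since $\{u_\varepsilon\}$ is bounded in $H^1(\R^2)$ by Lemma \ref{constrp} and $\sup_{z\in\R^2}\int_{B_R(z)}|u_\varepsilon|^2\,\ud x\to0$, the classical vanishing lemma (cover $\R^2$ by balls of radius $R$ with bounded overlap and interpolate on each ball, as in \cite{MR2036791}) gives $u_\varepsilon\to0$ in $L^q(\R^2)$ for every $q>2$; in particular $\|u_\varepsilon\|_{L^{2q'}(\R^2)}\to0$ for every $q'>1$.

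Next I would use the pointwise growth estimates. By $(f_1)$--$(f_2)$, for every $\delta>0$ and every $\gamma>4\pi$ there is $C_\delta>0$, independent of $\varepsilon$ and $x$, with $|g_\varepsilon(x,t)|\le\delta|t|+C_\delta|t|(e^{\gamma t^2}-1)$ for all $t$; hence $|g_\varepsilon(x,t)t|\le\delta t^2+C_\delta t^2(e^{\gamma t^2}-1)$, and integrating in $t$ (and using that $s\mapsto e^{\gamma s^2}-1$ is increasing on $[0,\infty)$) also $|G_\varepsilon(x,t)|\le\frac{\delta}{2}t^2+\frac{C_\delta}{2}t^2(e^{\gamma t^2}-1)$. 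By Lemma \ref{norm_control}, $\limsup_{\varepsilon\to0}\|\nabla u_\varepsilon\|_{L^2}^2<\frac12$, so --- exactly as in Step 3 of the proof of Proposition \ref{PSC} --- one may fix $q>1$ close to $1$ and $\gamma>4\pi$ close to $4\pi$ such that $q\gamma\|\nabla u_\varepsilon\|_{L^2}^2<4\pi$ for all small $\varepsilon$, and Lemma \ref{MTI} then yields $\sup_\varepsilon\int_{\R^2}(e^{q\gamma u_\varepsilon^2}-1)\,\ud x<\infty$. Combining this with H\"older's inequality ($1/q+1/q'=1$) and the first step,
\[
\int_{\R^2}u_\varepsilon^2(e^{\gamma u_\varepsilon^2}-1)\,\ud x\le\Big(\int_{\R^2}|u_\varepsilon|^{2q'}\,\ud x\Big)^{1/q'}\Big(\int_{\R^2}(e^{q\gamma u_\varepsilon^2}-1)\,\ud x\Big)^{1/q}\le C\,\|u_\varepsilon\|_{L^{2q'}(\R^2)}^2\to0 .
\]

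Putting the pieces together, for every $\delta>0$,
\[
\limsup_{\varepsilon\to0}\int_{\R^2}|g_\varepsilon(x,u_\varepsilon)u_\varepsilon|\,\ud x\le\delta\sup_\varepsilon\|u_\varepsilon\|_{L^2}^2,\qquad\limsup_{\varepsilon\to0}\int_{\R^2}|G_\varepsilon(x,u_\varepsilon)|\,\ud x\le\frac{\delta}{2}\sup_\varepsilon\|u_\varepsilon\|_{L^2}^2,
\]
and since $\{\|u_\varepsilon\|_{L^2}\}$ is bounded and $\delta$ is arbitrary, both limits are $0$. I expect the only genuinely delicate point to be the uniform bound $\sup_\varepsilon\int_{\R^2}(e^{q\gamma u_\varepsilon^2}-1)\,\ud x<\infty$: this is exactly where the strict inequality $\limsup_\varepsilon\|\nabla u_\varepsilon\|_{L^2}^2<\frac12$ from Lemma \ref{norm_control} is needed, in order to leave room for the extra factors $q>1$ and $\gamma/(4\pi)>1$ in the exponent before invoking Lemma \ref{MTI}; once this is in hand, the remainder is a routine H\"older estimate against the vanishing norms $\|u_\varepsilon\|_{L^{2q'}}\to0$.
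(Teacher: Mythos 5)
Your proof is correct and follows essentially the same route as the paper: Lions' vanishing lemma to get $u_\varepsilon\to0$ in $L^{2q'}$, the growth bound $|g_\varepsilon(x,t)|\le\delta|t|+C_\delta|t|(e^{\gamma t^2}-1)$ from $(f_1)$--$(f_2)$, and H\"older together with Lemma \ref{MTI} (using $\limsup_\varepsilon\|\nabla u_\varepsilon\|_2^2<\tfrac12$ to fit $q\gamma\|\nabla u_\varepsilon\|_2^2<4\pi$). The only cosmetic difference is that you control $G_\varepsilon$ by integrating the pointwise bound in $t$, whereas the paper invokes $(f_3)$ to dominate $G_\varepsilon(x,t)$ by $\tfrac1\mu g_\varepsilon(x,t)t$; both work.
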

\begin{proof}
Using the hypothesis
$$
\lim_{\varepsilon \to 0}\sup_{z \in \mathbb{R}^{2}}\int_{B_R(z)}|u_\varepsilon|^{2}\,\ud x=0,
$$
from \cite{MR850686}, we get that as $\e\rg0$,
\begin{equation}\label{consq}
u_\varepsilon\rightarrow 0\,\,\, \textrm{in}\; L^q(\mathbb R^2) \;\textrm{for all}\; q\in (2, \infty).
\end{equation}
By $(f_1)$-$(f_2)$, for every $\delta >0$ and $\gamma>4\pi$, there exists $C_\delta>0$ such that
$$
f(t)\leq \delta t+C_\delta t(e^{\gamma t^2}-1),\,\ t\ge0.
$$
Hence
\begin{align*}
\int_{\mathbb{R}^{2}}g_\varepsilon(x, u_\varepsilon)u_\varepsilon\,\ud x&\leq \int_{\mathbb{R}^{2}}f(u_\varepsilon)u_\varepsilon\,\ud x\leq \delta \int |u_\varepsilon|^2\,\ud x+C_\delta \int |u_\varepsilon|^2(e^{\gamma u_\varepsilon^2}-1)\,\ud x.
\end{align*}
By H\"older's inequality and $(e^t-1)^r\leq(e^{rt}-1)$ for $r>1$, we get
\begin{align*}
\int_{\mathbb{R}^{2}}g_\varepsilon(x, u_\varepsilon)u_\varepsilon\,\ud x&\leq \delta C+C_\delta  \left(\int_{\mathbb R^2}|u_\varepsilon|^{2q^\prime}\,\ud x\right)^\frac{1}{q^\prime}\left(\int (e^{\gamma q u_\varepsilon^2}-1)\,\ud x\right)^\frac{1}{q}.
\end{align*}
Choosing $\gamma>4\pi$ and $q>1$ with $q\gamma\|\na u_\varepsilon\|_2^2<4\pi$ for $\e$ small, by Lemma \ref{MTI}, we get
 \begin{align*}
\int_{\mathbb{R}^{2}}g_\varepsilon(x, u_\varepsilon)u_\varepsilon\,\ud x&\leq \delta  C+D_\delta  \left(\int_{\mathbb R^2}|u_\varepsilon|^{2q^\prime}\,\ud x\right)^\frac{1}{q^\prime}.
\end{align*}
Hence taking the limit as $\varepsilon\rightarrow 0$ together with \eqref{consq} and $(f3)$, we get the required result.
\end{proof}
The next result is a splitting Lemma for the exponential critical growth. In the higher dimension, for the Sobolev critical growth, a similar result has been obtained in \cite{MR3426106}. The idea of the proof is somewhat similar to \cite[Lemma 4.7]{MR3426106}.
\begin{lemma}\label{spliting}
Assume $(f1)$-$(f2)$. Let  $(u_k) \subset H^{1}(\mathbb{R}^{2})$ with $u_k\rightharpoonup u$ in $H^1(\mathbb R^2)$, a. e. in $\R^2$ and $\limsup_{k\rg\iy}\|\na u_k \|_2^2 < 1$. Then up to a subsequence,
\[
\ds \int_{\mathbb R^2}(f(u_k)-f(u)-f(u_k-u))\phi\,\ud x=o_k(1)\|\phi\|,\,\,\mbox{uniformly for}\,\,\phi\in C_0^\infty(\mathbb R^2).
\]
\end{lemma}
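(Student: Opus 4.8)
The plan is to establish the Brezis--Lieb type splitting for the nonlinearity $f$ by a truncation-plus-uniform-integrability argument. Write $v_k = u_k - u$, so $v_k \rightharpoonup 0$ in $H^1(\mathbb R^2)$ and $v_k \to 0$ a.e.\ in $\mathbb R^2$; by hypothesis $\limsup_k \|\nabla u_k\|_2^2 < 1$, and since $\|\nabla v_k\|_2 \le \|\nabla u_k\|_2 + \|\nabla u\|_2$ is bounded, we may fix $\alpha > 4\pi$ close to $4\pi$ and $\theta \in (1,2)$ close to $1$ so that $\theta\alpha\|\nabla u_k\|_2^2 < 4\pi$ and $\theta\alpha\|\nabla v_k\|_2^2 < 4\pi$ for $k$ large; Lemma \ref{MTI} then gives a uniform bound $\sup_k \int_{\mathbb R^2}(e^{\theta\alpha u_k^2}-1)\,\ud x + \int_{\mathbb R^2}(e^{\theta\alpha v_k^2}-1)\,\ud x \le C$. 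By $(f_1)$--$(f_2)$, for every $\delta>0$ there is $C_\delta>0$ with $|f(t)| \le \delta|t| + C_\delta|t|(e^{\alpha t^2}-1)$ for all $t$, and similarly $|f(t)-f(s)| \le$ a term controlled by these exponential gradients.

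First I would split $\mathbb R^2$ into the region $B(0,R)$ and its complement. On $B(0,R)^c$: the functions $u, u_k, v_k$ have small $L^q$ norm there for $q>2$ (for $u$ because $u \in L^q$; for $u_k$ and $v_k$ after possibly enlarging the domain of smallness using the local compactness $u_k \to u$ in $L^q_{\loc}$ together with boundedness), and combining H\"older's inequality with the uniform exponential bound yields
\[
\left|\int_{B(0,R)^c}(f(u_k)-f(u)-f(v_k))\phi\,\ud x\right| \le \big(\varepsilon(R) + \delta C\big)\|\phi\|,
\]
where $\varepsilon(R)\to 0$ as $R\to\infty$ uniformly in $k$, and $\delta$ is arbitrary. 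So the tail contribution is negligible. On the fixed ball $B(0,R)$: here one uses that $v_k \to 0$ a.e.\ and is bounded in $H^1$, the a.e.\ convergence $f(u_k) - f(v_k) \to f(u)$ (pointwise, using continuity of $f$ and $f(0)=0$ from $(f_1)$), and a Vitali/de la Vall\'ee-Poussin argument: the integrands $|f(u_k)-f(v_k)-f(u)|^{q'}$ are uniformly integrable on $B(0,R)$ because a higher power is dominated, via $(f_2)$ and the uniform exponential bound, by an $L^1$-bounded family. Hence $f(u_k)-f(v_k) \to f(u)$ in $L^{q'}(B(0,R))$, which by H\"older gives
\[
\left|\int_{B(0,R)}(f(u_k)-f(u)-f(v_k))\phi\,\ud x\right| \le o_k(1)\,\|\phi\|_{L^q(B(0,R))} \le o_k(1)\,\|\phi\|.
\]
Combining the two estimates, choosing first $\delta$ small, then $R$ large, then $k$ large, gives $\big|\int_{\mathbb R^2}(f(u_k)-f(u)-f(v_k))\phi\big| \le o_k(1)\|\phi\|$ uniformly in $\phi \in C_0^\infty(\mathbb R^2)$, which is the claim after relabeling $f(v_k) = f(u_k-u)$.

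The main obstacle is the uniform integrability on $B(0,R)$ of the family $\{|f(u_k)-f(u_k-u)-f(u)|^{q'}\}$: because the growth is exponential rather than polynomial, one cannot simply invoke an $L^p$ bound, but must exploit the sharp form of Lemma \ref{MTI} with the strict inequality $\theta\alpha\|\nabla u_k\|_2^2 < 4\pi$ to absorb a slightly super-critical power of the exponential into an $L^1$-bounded family, and then apply Vitali's convergence theorem. A secondary technical point is ensuring the exponents $\alpha$ and $\theta$ can be chosen uniformly in $k$, which is exactly where the hypothesis $\limsup_k \|\nabla u_k\|_2^2 < 1$ (rather than $\le 1$) is used. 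Everything else is routine H\"older and cut-off bookkeeping.
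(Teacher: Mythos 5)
Your treatment of the exterior region $B(0,R)^c$ contains a genuine gap. You assert that $\|u_k\|_{L^q(B(0,R)^c)}$ and $\|u_k-u\|_{L^q(B(0,R)^c)}$ can be made small uniformly in $k$ by taking $R$ large, invoking ``local compactness'' of $u_k\to u$ in $L^q_{\mathrm{loc}}$. This is false: local compactness controls $u_k$ only on compact sets and says nothing about its tails. Under the stated hypotheses (mere weak convergence plus a gradient bound) mass can escape to infinity --- take $u_k=u+\psi(\cdot-k e_1)$ for a fixed bump $\psi$; then $u_k\rightharpoonup u$ and $u_k\to u$ in $L^q_{\mathrm{loc}}$, yet $\|u_k\|_{L^q(B(0,R)^c)}\ge\|\psi\|_{L^q}-o_k(1)$ for every fixed $R$. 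This is not a pathological case to be excluded: in the application of the lemma (Proposition \ref{CCLion}) the sequences are precisely translates losing mass at infinity, so the tail smallness you rely on is exactly what fails there. Consequently your exterior estimate, which bounds $f(u_k)$ and $f(u_k-u)$ separately through their (allegedly small) $L^q$ tails, does not go through.

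The correct mechanism on $B(0,R)^c$, and the one the paper uses, is a cancellation between $f(u_k)$ and $f(u_k-u)$: only $u$ is small outside a large ball, and one must show that $|f(u_k)-f(u_k-u)|$ is small there \emph{because the arguments differ by the small quantity $u$}, while $f(u)$ is small there separately. This requires splitting $B(0,R)^c$ according to the size of $u_k$ (a set where $|u_k|\le c_\varepsilon$ so both values lie in the region where $(f_1)$ applies; an intermediate set $c_\varepsilon\le|u_k|\le C_\varepsilon$ handled by uniform continuity of $f$ on compacta and the finiteness of its measure; and a set $|u_k|\ge C_\varepsilon$ handled by $(f_2)$ and the Trudinger--Moser bound of Lemma \ref{MTI}), further intersected with $\{|u|\le\delta\}$ and $\{|u|\ge\delta\}$, the latter having small measure for $R$ large. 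None of this appears in your proposal. Your interior argument on $B(0,R)$ (Vitali plus a de la Vall\'ee-Poussin bound obtained from Lemma \ref{MTI} with a strictly supercritical exponent, which is where $\limsup_k\|\nabla u_k\|_2^2<1$ enters) is essentially sound and close in spirit to what the paper does there, but the exterior part as written is not repairable without replacing its key claim by the cancellation argument above.
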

\begin{proof}
By $(f1)$, for any given $\varepsilon>0$, there exists a constant $c_\varepsilon\in (0, 1)$ such that $|f(t)|\leq \varepsilon |t|$ for all $|t|<2c_\varepsilon$. By $(f2)$, for every $\gamma>4\pi$ and $\varepsilon>0$, there exists $C_\varepsilon>2$ (large enough) such that $|f(t)|\leq \varepsilon (e^{\gamma t^2}-1)$ for every $|t|>C_\varepsilon-1$. Also using the continuity of the term $\frac{f(t)}{t}$ in a compact interval $[2c_\varepsilon, C_\varepsilon-1]$, we get
$|f(t)|\leq C|t|$ for $t \in[2c_\varepsilon, C_\varepsilon-1]$. Hence for all $t\in \mathbb R$, $|f(t)|\leq C(\varepsilon)|t|+\varepsilon (e^{\gamma t^2}-1)$. Now choose  $R=R(\varepsilon)$ such that
\be\lab{woy1}
\ds \int_{\mathbb R^2\setminus B(0, R)}|f(u)||\phi| \,\ud x\leq C\varepsilon \|\phi\|.
\ee
By the continuity of $f$, there exists $\delta \in (0, c_\varepsilon)$ such that  $|f(t_1)-f(t_2)|<\varepsilon c_\varepsilon$ for $|t_1-t_2|<\delta, |t_1|, |t_2|<C_\varepsilon+1$. Set $A_k:=\{x\in \mathbb R^2 \setminus B(0, R) | |u_k(x)|\leq c_\varepsilon\}$, by $|u_k-u|<c_\varepsilon+\delta<2c_\varepsilon$, we get
\begin{align*}
\ds \int_{A_k\cap\{|u|\leq \delta\}}|(f(u_k))-f(u_k-u)||\phi| \,\ud x&\leq \ds \varepsilon  \int_{A_k\cap\{|u|\leq\delta\}}(|u_k|+|u_k-u|)|\phi|\,\ud x\\&\leq \varepsilon (\|u_k\|_2+\|u_k-u\|_2)\|\phi\|_2\leq C\varepsilon\|\phi\|.
\end{align*}
Set $B_k:=\{x\in \mathbb R^2 \setminus B(0, R) | |u_k(x)|\geq C_\varepsilon\}$, then
\begin{align*}
\ds \int_{B_k\cap\{|u|\leq\delta\}}|(f(u_k))-f(u_k-u)||\phi| \,\ud x&\leq \ds \varepsilon  \int_{B_k\cap\{|u|\leq\delta\}}\left(\left(e^{\gamma u_k^2}-1\right)+\left(e^{\gamma (u_k-u)^2}-1\right) \right)|\phi|\,\ud x.
\end{align*}
Since $\|\na(u_k-u)\|_2^2=\|\na u_k\|_2^2-\|\na u\|_2^2+o_k(1)$, we get $\limsup_{k\rg\iy}\|\na(u_k-u)\|_2<1$. Hence
\begin{align*}
\ds \int_{B_k\cap\{|u|\leq\delta\}}|(f(u_k))-f(u_k-u)||\phi| \,\ud x&\leq \ds \varepsilon \|\phi\|\left(\int_{B_k\cap\{|u|\leq \delta\}}(e^{\gamma q u_k^2}-1)\,\ud x \right)^\frac1q\\
&+\varepsilon \|\phi\|\left(\int_{B_k\cap\{|u|\leq\delta\}}(e^{\gamma q (u_k-u)^2}-1)\,\ud x\right)^\frac1q.
\end{align*}
Now choosing $\gamma>4\pi$ (close to $4\pi$) and $q>1$ (close to 1) such that $\gamma q\|\na u_k\|_2^2<4\pi$ and $\gamma q\|\na (u_k-u)\|_2^2<4\pi$ for $k$ large, together with Lemma \ref{MTI}, for some $C>0$,
\[
\ds \int_{B_k\cap\{|u|\leq\delta\}}|(f(u_k))-f(u_k-u)||\phi| \,\ud x\leq \ds  C\varepsilon \|\phi\|,\,\,\,\mbox{for large}\,\, k.
\]
Set $C_k:=\{x\in \mathbb R^2 \setminus B(0, R) | c_\varepsilon\leq |u_k(x)|\leq C_\varepsilon\}$. Since $u_k\in H^1(\mathbb R^2)$, $|C_k|<\infty$. Then,
\begin{align*}
\ds \int_{C_k\cap\{|u|\leq\delta\}}|(f(u_k))-f(u_k-u)||\phi| \,\ud x&\leq \ds c_\varepsilon \varepsilon\|\phi\|_2|C_k|^\frac12\leq \varepsilon \|u_k\|_2\|\phi\|_2\leq C\varepsilon \|\phi\|.
\end{align*}
Thus from the above estimates,
\begin{equation}\label{outball}
\ds \int_{(\mathbb R^2\setminus B(0, R))\cap\{|u|\leq\delta\}}|(f(u_k))-f(u_k-u)||\phi| \,\ud x\leq C\varepsilon \|\phi\|,\,\,\,\mbox{for large}\,\, k.
\end{equation}
Note that,
$$
  |f(u_k)-f(u_k-u)|\leq C(\varepsilon)(|u_k|+|u_k-u|)+\varepsilon \left((e^{\gamma u_k^2}-1)+(e^{\gamma (u_k-u)^2}-1)\right).
$$
Hence
\begin{align*}
  &\ds \int_{(\mathbb R^2\setminus B(0, R))\cap\{|u|\geq\delta\}}|(f(u_k))-f(u_k-u)||\phi| \,\ud x\\&\leq\ds \varepsilon\int_{(\mathbb R^2\setminus B(0, R))\cap\{|u|\geq\delta\}}\left((e^{\gamma u_k^2}-1)+(e^{\gamma (u_k-u)^2}-1)\right)|\phi|\,\ud x\\&+C(\varepsilon)\ds \int_{(\mathbb R^2\setminus B(0, R))\cap\{|u|\geq\delta\}}(|u_k|+|u_k-u|)|\phi|\,\ud x.
  \end{align*}
  Now again arguing as before, $\limsup_{k\rg\iy}\|u_k-u\|<1$ for large $k$ and choosing $\gamma>4\pi$ (close to $4\pi$) and $q>1$ (close to $1$) such that $\gamma q\|\na u_k\|_2^2<4\pi$ and $\gamma q\|\na(u_k-u)\|_2^2<4\pi$ for $k$ large, we get
   \begin{align*}
  &\ds \int_{(\mathbb R^2\setminus B(0, R))\cap\{|u|\geq\delta\}}|(f(u_k))-f(u_k-u)||\phi| \,\ud x\\&\leq\ds \varepsilon\int_{(\mathbb R^2\setminus B(0, R))\cap\{|u|\geq\delta\}}\left((e^{\gamma u_k^2}-1)+(e^{\gamma (u_k-u)^2}-1)\right)|\phi|\,\ud x \leq C\varepsilon\|\phi\|.
\end{align*}
Since $u\in H^1(\mathbb R^2)$, $|{(\mathbb R^2\setminus B(0, R))\cap\{|u|\geq\delta\}}|\rightarrow 0$ as $R\rightarrow \infty$. Thus for large $R>0$, we get
  \begin{align*}
 & C(\varepsilon)\ds \int_{(\mathbb R^2\setminus B(0, R))\cap\{|u|\geq\delta\}}(|u_k|+|u_k-u|)|\phi| \,\ud x\\&\leq C(\varepsilon)(\|u_k\|_3+\|u_k-u\|_3)\|\phi\|_3|{(\mathbb R^2\setminus B(0, R))\cap\{|u|\geq\delta\}}|^\frac13
  \leq\varepsilon \|\phi\|.
  \end{align*}
  Hence
  \[
  \ds \int_{(\mathbb R^2\setminus B(0, R))\cap\{|u|\geq\delta\}}|(f(u_k))-f(u_k-u)||\phi| \,\ud x\leq C\varepsilon \|\phi\|,\,\,\,\mbox{for large}\,\, k.
  \]
Thus
\be\lab{woy2}
  \ds \int_{\mathbb R^2\setminus B(0, R)}|(f(u_k))-f(u_k-u)||\phi| \,\ud x\leq C\varepsilon \|\phi\|,\,\,\,\mbox{for large}\,\, k.
\ee
Finally, similar as above, for any $\e>0$, there exists $C_\e>0$ such that $|f(t)|\leq C_\varepsilon|t|+\varepsilon (e^{\gamma t^2}-1),t\in\R$, where $\gamma>4\pi$ (close to $4\pi$), $q>1$ and $\gamma q\|\na(u_k-u)\|_2^2<4\pi$ for $k$ large. Then
 \begin{align*}
  &\int_{B(0, R)}|f(u_k-u)||\phi| \,\ud x\\
  &\le C_\e\int_{B(0, R)}|u_k-u||\phi| \,\ud x+\e\int_{B(0, R)}(e^{\gamma (u_k-u)^2}-1)|\phi| \,\ud x\\
  &\le C_\e\left(\int_{B(0, R)}|u_k-u|^2\,\ud x\right)^{\frac{1}{2}}\left(\int_{B(0, R)}|\phi|^2\,\ud x\right)^{\frac{1}{2}}\\
  &\,\,\,\,\,\,+\e\left(\int_{B(0, R)}|\phi|^{q'}\,\ud x\right)^{\frac{1}{q'}}\left(\int_{B(0, R)}(e^{q\gamma (u_k-u)^2}-1)\,\ud x\right)^{\frac{1}{q}}.
 \end{align*}
By Lemma \ref{MTI} and $u_k\rightarrow u$ in $L^2(B(0, R))$, we know for $k$ large enough, there exists $C>0$ (independent of $\e$) such that
\[
   \ds \int_{B(0, R)}|f(u_k-u)||\phi| \,\ud x\leq C\varepsilon \|\phi\|.
  \]
Set $D_k:=\{x\in B(0, R): |u_k(x)-u(x)|>1\}$. Note that $u_k(x)\rightarrow u(x)$ a. e. in $B(0, R)$ as $k\rg\iy$. Hence $|D_k|\rightarrow 0$ as $k\rightarrow \infty$. Noting that $\|\na u\|_2\le\liminf_{k\rg\iy}\|\na u_k\|_2<1$,  similar as above, for some $C>0$ (independent of $\e$),
\begin{align*}
  \ds \int_{D_k}|f(u_k)-f(u)||\phi| \,\ud x&\leq C(\varepsilon)\ds \int_{D_k}(|u_k|+|u|)|\phi|\,\ud x+\varepsilon \ds \int_{D_k} \left((e^{\gamma u_k^2}-1)+(e^{\gamma u^2}-1)\right)|\phi|\,\ud x\\
  &\leq C(\varepsilon)\ds \int_{D_k}(|u_k|+|u|)|\phi|\,\ud x+C \varepsilon \|\phi\|.
  \end{align*}
Since $u_k\rightarrow u$ in $L^2(B(0, R))$, by using the fact that $|D_k|\rightarrow 0$ as $k\rightarrow \infty$, for some $\ti{C}>0$ (independent of $\e$),
$$
\int_{D_k}|f(u_k)-f(u)||\phi| \,\ud x\le \ti{C}\varepsilon \|\phi\|.
$$
  Now, since $u \in H^1(\mathbb R^2)$, $|u\geq L|\rightarrow 0$ as $L\rg\iy$. Hence choosing $L=L(\varepsilon)$ large enough such that
   \begin{align*}
  &\ds \int_{(B(0, R)\setminus D_k)\cap \{u\geq L\}}|f(u_k)-f(u)||\phi| \,\ud x\\
  &\leq C(\varepsilon)\ds \int_{(B(0, R)\setminus D_k)\cap \{u\geq L\}}(|u_k|+|u|)|\phi|\,\ud x
  +\varepsilon \ds \int_{(B(0, R)\setminus D_k)\cap \{u\geq L\}}\left((e^{\gamma u_k^2}-1)+(e^{\gamma u^2}-1)\right)|\phi|\,\ud x\\
  &\leq C(\varepsilon)(\|u_k\|_3+\|u\|_3)\|\phi\|_3|{(B(0, R)\setminus D_k)\cap\{|u|\geq L\}}|^\frac13+C \varepsilon \|\phi\|\leq C\varepsilon \|\phi\|.
  \end{align*}
  On the other hand, by the Lebesgue dominated convergence theorem,
  \[
  \ds \int_{(B(0, R)\setminus D_k)\cap\{u\leq L\}}|f(u_k)-f(u)||\phi| \,\ud x\leq C\varepsilon \|\phi\|,\,\,\,\mbox{for large}\,\, k.
  \]
Thus
   \begin{equation}\label{inball}
  \ds \int_{B(0, R)}|f(u_k)-f(u)-f(u_k-u)||\phi| \,\ud x\leq C\varepsilon \|\phi\|,\,\,\,\mbox{for large}\,\, k,
  \end{equation}
which, combing \re{woy1} and \re{woy2}, yields the result desired.
\end{proof}

\begin{proposition}\label{CCLion} There exist $n\in \mathbb N$, $\bar c>0$ and, for all $i=1,\ldots,n$,
there exist $y_\e^i\in B_2^\e \cap H_\e$, $\bar y_i \in B_2$
and $u_i\in H^1(\RT) \setminus \{0\}$ such that
\begin{align*}
&\e y_\e^i\to \bar y_i,\,\, |y_\e^i -y_\e^j|\to \infty, \ \hbox{ if }i\neq j ,
\\
&u_\e (\cdot +y_\e^i )\rightharpoonup u_i\ \hbox{in }H^1(\RT),\,\,\|u_i\|\geq \bar c,  
\\
&\| u_\e -\sum_{i=1}^n u_i (\cdot -y_\e^i )\|_{H^1(H_\e)} \to
0,
\end{align*}
and $u_i$ is a positive solution of
$
-\Delta u_i+V(\bar y_i) u_i=g(\bar y_i,u_i)+\bar \lambda\cdot \bar y_i
u_i.
$
\end{proposition}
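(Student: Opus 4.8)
The plan is to argue by concentration--compactness (profile decomposition), extracting from $(u_\e)$ finitely many ``bubbles'' that carry its $H^1(H_\e)$-mass. First I would show that $(u_\e)$ does not vanish: if $\sup_{z\in\RT}\int_{B_R(z)}u_\e^2\,\ud x\to0$ for some $R>0$, then Lemma~\ref{lionsadm} gives $\int_{\RT}g_\e(x,u_\e)u_\e\,\ud x\to0$, so testing \eqref{constreq} against $u_\e$ and using $\beta_\e(u_\e)=0$ (whence $\int_{\RT}\lambda_\e\cdot h_\e(x)u_\e^2\,\ud x=\lambda_\e\cdot\int_{\RT}h_\e u_\e^2\,\ud x=0$) together with $V(\e x)\ge\alpha_1$ would force $\|u_\e\|\to0$, contradicting $\tilde I_\e(u_\e)=b_\e\ge m_{\alpha_1}>0$. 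Hence, along a subsequence, there are $\delta_0>0$ and $y_\e^1$ with $\int_{B_1(y_\e^1)}u_\e^2\,\ud x\ge\delta_0$; by Lemma~\ref{le:L4c} any such concentration sits at bounded distance from $B_4^\e$, and since $u_\e\chi_{H_\e}\nrightarrow0$ in $L^2$ (Lemma~\ref{le:L2LpH}) — and concentration at $\to\infty$ distance from $H_\e$ contributes nothing to $\|u_\e\|_{H^1(H_\e)}$ — at least one concentration point, which I call $y_\e^1$, sits at bounded distance from $H_\e$. After a bounded translation I assume $y_\e^1\in H_\e$ and pass to a subsequence with $\e y_\e^1\to\bar y_1$ and $v_\e^1:=u_\e(\cdot+y_\e^1)\rightharpoonup u_1\neq0$ in $H^1(\RT)$.

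Next I would pass to the limit in the translated equation \eqref{constreq}. Since $\|\nabla v_\e^1\|_2^2=\|\nabla u_\e\|_2^2<1/2$ by Lemma~\ref{norm_control}, Lemma~\ref{MTI} and Strauss' compactness lemma (exactly as in Steps~4 and~5 of Proposition~\ref{PSC}) handle the nonlinear term; together with $V(\e(\cdot+y_\e^1))\to V(\bar y_1)$ and $\chi_\e(\cdot+y_\e^1)\to\chi(\bar y_1)$ locally uniformly and, since $\lambda_\e\in E$ and $\lambda_\e=O(\e)$ (Lemma~\ref{asylam}), $\lambda_\e\cdot h_\e(\cdot+y_\e^1)\to\bar\lambda\cdot\bar y_1$ locally uniformly if $|\bar y_1|<R_3$ (and $\to0$ if $|\bar y_1|>R_3$), this shows $u_1$ satisfies a limit equation. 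To locate $\bar y_1$ I case-split: if $|\bar y_1|\ge R_2$ then $\chi(\bar y_1)=0$, so the limit equation reads $-\Delta u_1+(V(\bar y_1)-\bar\lambda\cdot\bar y_1)u_1=\tilde f(u_1)$; testing against $u_1$ and using $|\tilde f(t)|\le at$ gives $\|\nabla u_1\|_2^2+(V(\bar y_1)-\bar\lambda\cdot\bar y_1)\|u_1\|_2^2\le a\|u_1\|_2^2$, and since $\bar\lambda\cdot\bar y_1\le\alpha_1/2$ (because $y_\e^1\in H_\e$) while $a<\alpha_1/2$, we get $V(\bar y_1)-\bar\lambda\cdot\bar y_1>a$ and hence $u_1\equiv0$, a contradiction. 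Thus $\bar y_1\in B_2$, $y_\e^1\in B_2^\e\cap H_\e$ for $\e$ small, the coefficient $V(\bar y_1)-\bar\lambda\cdot\bar y_1\ge\alpha_1/2>0$ is positive, so by the strong maximum principle ($u_\e>0$, hence $u_1\ge0$) $u_1>0$, and $u_1$ solves $-\Delta u_1+V(\bar y_1)u_1=g(\bar y_1,u_1)+\bar\lambda\cdot\bar y_1\,u_1$.

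Then I would iterate. Given $u_1,\dots,u_j$ at $y_\e^1,\dots,y_\e^j\in B_2^\e\cap H_\e$ with $|y_\e^i-y_\e^k|\to\infty$ for $i\neq k$, set $w_\e^{(j)}:=u_\e-\sum_{i=1}^j u_i(\cdot-y_\e^i)$. If $w_\e^{(j)}\chi_{H_\e}\to0$ in $L^2(\RT)$ the process stops; otherwise the argument above applied to $w_\e^{(j)}$ produces a new bubble $u_{j+1}$ at $y_\e^{j+1}\in B_2^\e\cap H_\e$, with necessarily $|y_\e^{j+1}-y_\e^i|\to\infty$ for all $i\le j$ (else the weak limit of $w_\e^{(j)}(\cdot+y_\e^{j+1})$ would vanish). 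The Brezis--Lieb identity for the quadratic part, together with Lemma~\ref{spliting} and Lemma~\ref{smon} for the nonlinear part, makes the energies split asymptotically, each bubble carrying energy bounded below by a fixed positive constant (equivalently $\|u_i\|\ge\bar c>0$), while $\tilde I_\e(u_\e)=b_\e$ stays bounded above (by $b_\e\le m_\e$ and Proposition~\ref{estima}); hence only finitely many, say $n$, bubbles arise. When the process stops, $w_\e^{(n)}\chi_{H_\e}\to0$ in $L^2(\RT)$, and writing the approximate equation for $w_\e^{(n)}$ on $H_\e$ (each $u_i(\cdot-y_\e^i)$ solving the corresponding limit equation up to $o_\e(1)$ near $y_\e^i$, plus the disjoint-support cancellations of $h_\e$, $\chi_\e$ and a cut-off adapted to $H_\e$, as in the proof of Lemma~\ref{le:L4c}), Lemma~\ref{spliting} reduces the nonlinearity to $g_\e(x,w_\e^{(n)})$ up to $o_\e(1)$, and Lemma~\ref{lionsadm} then gives $\|w_\e^{(n)}\|_{H^1(H_\e)}\to0$.

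I expect the main difficulties to be twofold. The first is the localization of every retained bubble inside $B_2^\e\cap H_\e$: this relies on the smallness of $a$ relative to $\alpha_1$ and on the defining constant of $H_\e$ to kill would-be bubbles with $|\bar y_i|\ge R_2$ through the truncated nonlinearity $\tilde f$, and it requires delicate estimates near $\partial H_\e$ and in the annulus $B_3^\e\setminus B_2^\e$ where $h_\e$ is active. The second is the upgrade of the residual from $L^2(H_\e)$-smallness to $H^1(H_\e)$-smallness, which demands careful bookkeeping of the non-autonomous coefficients $V(\e x)$, $\chi_\e$, $h_\e$ and of the asymptotically vanishing interactions between the far-apart profiles.
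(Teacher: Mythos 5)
Your proposal follows essentially the same route as the paper's proof: non-vanishing via the Lions-type Lemma \ref{lionsadm} (using the gradient bound of Lemma \ref{norm_control} to stay in the Trudinger--Moser regime), extraction of a first bubble localized in $B_2^\e\cap H_\e$ by means of Lemmas \ref{le:L4c} and \ref{le:L2LpH} and the truncation $\tilde f$ with $a<(1-2/\mu)\alpha_1$, iteration with the splitting Lemma \ref{spliting}, finiteness from the uniform lower bound on each bubble's energy, and finally the exclusion of the case where the residual is small in $L^2(H_\e)$ but not in $H^1(H_\e)$. The only cosmetic difference is that the paper works with the even reflection of $u_\e|_{H_\e}$ across $\partial H_\e$ to make the concentration argument on the half-plane rigorous, which is exactly the device your informal "concentration far from $H_\e$ contributes nothing" step calls for.
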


\begin{proof}
Denote by $\tilde{u}_\e$ the even reflection of $u_\e|_{H_\e}$
with respect to $\partial H_{\e}$. Observe that $\{\tilde{u}_\e\}$
is bounded in $H^1(\RT)$ and does not converge to $0$ in $L^{2}(\RT)$
(recall Lemma \ref{le:L2LpH}). Now we claim that
\[
\ds \int_{\mathbb{R}^{2}}g_\e(x, u_\e)u_\e\,\ud x \nrightarrow 0\;\textrm{as}\; \e\rightarrow 0.
\]
Suppose not then using the fact the  $u_\e$ solves problem \eqref{constreq} we get that $u_\e\rightarrow 0$ in $H^1(\mathbb R^2)$ as $\e\rightarrow 0$ which contradicts the fact that $b_\e>0$. Noting that $\limsup_{\e\rg0}\|\na u_\varepsilon \|_2^2<\frac{1}{2}$, by Lemma \ref{norm_control}, and using the even symmetry of $\tilde u_\varepsilon$ we get $\limsup_{\e\rg0}\|\na \ti{u}_\varepsilon \|_2^2\le2\limsup_{\e\rg0}\|\na u_\varepsilon \|_2^2<1$. Noting that $0\leq \ds \int_{\mathbb{R}^{2}}g_\e(x, u_\varepsilon)u_\varepsilon\,\ud x\leq \ds\int_{\mathbb{R}^{2}}f(u_\varepsilon)u_\varepsilon\,\ud x \nrightarrow 0$ as $\varepsilon\rightarrow 0$, by Lemma \ref{lionsadm}, there exists $y^1_\e
\in \R^2$ with
\[
\int_{B(y^1_\e,1)}| u_\e|^2\geq c>0.
\]
By Lemma \ref{le:L4c},
we can assume that $y^1_\e \in H_\e\cap B^\e_5$. Then there
exists $u_1\in H^1(\RT)$ such that $v_\e^1=u_\e
(\cdot+ y_\e^1)\rightharpoonup u_1$, weakly in $H^1(\RT)$. Observe that $v_\e^1$ solves the equation
\begin{equation*}
-\Delta v_\e^1+V(\e x+\e y_\e^1) v_\e^1=g(\e x+ \e
y_\e^1,v_\e^1) +\lambda_\e\cdot  h_\e(x+y_\e^1) v_\e^1,
\end{equation*}
Now denote $\ds \lim_{\e\rightarrow 0}\e y_\e^1=\bar{y}_1$. By $(f_1)$-$(f_2)$, for every $\delta>0$ and $\gamma>4\pi$, there exists $C_\delta>0$ such that

\[
\left|\ds \int_{\mathbb R^2}g_\varepsilon(x+y_\varepsilon^1, v_\varepsilon^1(x))v_\varepsilon^1(x)\,\ud x\right|\leq  C\delta+C_\delta\left(\ds \int_{\mathbb R^2}(e^{\gamma q (v_\varepsilon^1)^2}-1)\,\ud x\right)^\frac1q,
\]
which is bounded for some $q, \gamma$ with $q \gamma\|\na v_\varepsilon^1\|_2^2=q \gamma\|\na u_\varepsilon\|_2^2<4\pi$.
By \cite[ Lemma 2.1]{MR1386960},
\[
\ds \lim_{\e\rightarrow 0}\int_{\RT} g(\e x+ \e
y_\e^1,v_\e^1) \phi \,\ud x=\int_{\RT} g(\bar{y}_1,u_1)\phi \,\ud x,\;\;\textrm{for all}\;\; \phi\in C_0^\infty(\mathbb R^2) \]
and so, passing to the limit,  $u_1$ is a nontrivial weak solution of
\[
-\Delta u_1+V(\bar y_1) u_1=g(\bar y_1,u_1)+\bar \lambda\cdot \bar y_1
u_1.
\]
Since $y_\e^1\in H_\e$, we have that $\bar \lambda\cdot \bar y_1\leq
\alpha_1/2$ and so  $\bar y_1\in B_2$ (otherwise $u_1$ should be
$0$) and, by $(f1)$, there exists $c>0$ such that
$ \|u_\e\| \geq \|u_1\|>c.$
Let us define $w_\e^1=u_\e - u_1(\cdot - y_{\e}^1)$. We consider
two possibilities: either $\| w_\e^1\|_{H^1(H_\e)} \to 0$ or not.
In the first case the proposition should be proved by taking $n=1$.
In the second case, there are still two sub-cases: either $\|
w_\e^1 \|_{L^{2}(H_\e)} \to 0$ or not.
\\
{\bf Step 1: } Assume that $\| w_\e^1 \|_{L^{2}(H_\e)}
\nrightarrow 0$.

In such case, we can repeat the previous argument to the sequence
$\{w_\e^1\}$: we take $\tilde{w}_\e^1$ as the even reflection of $w_\e^1$ with
respect to $\partial H_\e$. Observe that
$$
\|\na \tilde w_\varepsilon^1\|_2^2\le2\|\na w_\varepsilon^1\|_2^2=2\|\na u_\e(\cdot+y_{\e}^1) - \na u_1\|_2^2=2\|\na u_\e\|_2^2-2\|\na u_1\|_2^2+o_\e(1),
$$
which implies that $\limsup_{\e\rg0}\|\tilde \na w_\varepsilon^1\|_2<1.$
Applying Lemma \ref{lionsadm}, there exists $y^2_\e \in \mathbb R^2$ such that
\[
\int_{B(y^2_\e,1)}|w^1_\e|^2\,\ud x\geq c>0.
\]
By Lemma \ref{le:L4c}, we get the following
\[
\ds \int_{(B_4^\e)^c}|w^1_\e|^2\,\ud x\to 0\;\;\textrm{as}\;\;\e\rightarrow 0
\]
which implies $y_\e^2\in H_\e\cap B_5^\e$.
Therefore, as above, there exists $u_2\in H^1(\RT)\setminus\{0\}$ such
that $v_\e^2=w_\e^1 (\cdot+ y_\e^2)\rightharpoonup u_2$,
weakly in $H^1(\RT)$. Observe that $v_\e^2$ solves the following problem
\begin{align*}
-\Delta v_\e^2 &+V(\e x+\e y_\e^2) v_\e^2-g(\e x+ \e
y_\e^2,v_\e^2) -\lambda_\e\cdot  h_\e(x+y_\e^2) v_\e^2\\
&=(V(\bar{y}_1)-V(\e x+\e y_\e^2))u_1(y+y_\e^2-y_\e^1)+(\lambda_\e \cdot  h_\e(x+y_\e^2)
-\bar {\lambda}\cdot \bar {y}_1)u_1(y+y_\e^2-y_\e^1)\\
&+\left(g(\e x+ \e y_\e^2,u_\e(x+y_\e^2))-g(\e x+ \e y_\e^2,v_\e^2)-g(\bar y_1,u_1(y+y_\e^2-y_\e^1))\right).
\end{align*}
Take any $\phi\in C_0^\infty(\mathbb R^2)$ with $\|\phi\|=1$, then by the fact that $\ds \lim _{\e\rightarrow 0}\e y_\e^2=\bar y_2$, we get
\begin{align*}
&\ds \int_{\RT} (V(\bar{y}_1)-V(\e x+\e y_\e^2))u_1(y+y_\e^2-y_\e^1) \phi \,\ud x\leq \left(\ds \int_{\RT} |V(\bar{y}_1)-V(\e x+\e y_\e^2)|^2|u_1|^2 \,\ud x\right)^\frac12 \|\phi\|\\
&\rightarrow 0, \;\;\textrm{uniformly as}\;\;\e\rightarrow 0
\end{align*}
and
\begin{align*}
&\ds \int_{\RT} (\lambda_\e \cdot  h_\e(x+y_\e^2)
-\bar {\lambda}\cdot \bar {y}_1)u_1(y+y_\e^2-y_\e^1) \phi \,\ud x\leq \left(\ds \int_{\RT} |\lambda_\e \cdot  h_\e(x+y_\e^2)
-\bar {\lambda}\cdot \bar {y}_1|^2|u_1|^2 \,\ud x\right)^\frac12 \|\phi\| \\
&\rightarrow 0, \;\;\textrm{uniformly as}\;\;\e\rightarrow 0.
\end{align*}
Now
\begin{align}\label{gepl}
&\ds \int_{\RT} (g(\e x+ \e y_\e^2,u_\e(x+y_\e^2))-g(\e x+ \e y_\e^2,v_\e^2)-g(\bar y_1,u_1(y+y_\e^2-y_\e^1)))\phi(x+y_\e^1-y_\e^2)\,\ud x\\\nonumber
&=\ds \int_{\RT} (g(\e x+ \e y_\e^2,u_\e(x+y_\e^2))-g(\e x+ \e y_\e^2,v_\e^2)-g(\e x+\e y_\e^1, u_1(y+y_\e^2-y_\e^1)))\phi(x+y_\e^1-y_\e^2)\,\ud x\\\nonumber
&+\ds \int_{\RT} g(\e x+\e y_\e^1, u_1(y+y_\e^2-y_\e^1))-g(\bar y_1,u_1(y+y_\e^2-y_\e^1))\phi(x+y_\e^1-y_\e^2)\,\ud x\rightarrow 0,
\end{align}
uniformly as $\e\rightarrow 0$, by making use of Lemma \ref{spliting} in the first integral in \eqref{gepl} and the Lebesgue dominated convergence theorem in the second integral in \eqref{gepl}. Therefore
\[
-\Delta v_\e^2 +V(\e x+\e y_\e^2) v_\e^2-g(\e x+ \e
y_\e^2,v_\e^2) -\lambda_\e\cdot  h_\e(x+y_\e^2) v_\e^2=o_\e(1),
\]
where $o_\e(1)\rightarrow 0$ strongly in $H^{-1}(\mathbb R^2)$.
Next following \cite[Lemma 2.1]{MR1386960} again, we get
\[
\ds \lim_{\e\rightarrow 0}\int_{\RT} g(\e x+ \e
y_\e^2,v_\e^2) \phi \,\ud x=\int_{\RT} g(\bar{y}_2,u_2)\phi \,\ud x,\;\;\textrm{for all}\;\; \phi\in C_0^\infty(\mathbb R^2) \]
and passing to the limit ,  $u_2$ is a nontrivial weak solution of
\[
-\Delta u_2+V(\bar y_2) u_2=g(\bar y_2,u_2)+\bar \lambda\cdot \bar y_2
u_2.
\]
Also, by $(f1)$ and the weak convergence, there exists $c>0$ such that
$$ \|u_2\|>c, \|u_\e\|^2 \geq \|u_1\|^2+\|u_2\|^2+o_\e(1).$$
Next we show that , $|y_\e^1 -y_\e^2 | \to +\infty$ as $\e\rightarrow 0$. On the contrary, we assume $|y_\e^1 -y_\e^2 |<c'$ for $\e$ small and some $c'>0$. Since we have
\[
c\leq \ds\int_{B(y^2_\e,1)}|w^1_\e|^2\,\ud x=\int_{B(y^2_\e-y_\e^1,1)}|u_\e(x+y_\e^1)-u_1(x)|^2\,\ud x.
\]
let $\tilde v_\e(x)=u_\e(x+y_\e^1)-u_1(x)$, then $\tilde v_\e \rightharpoonup 0$ in $H^1(\mathbb R^2)$ and \[
\int_{B(0,1+c')}|\tilde v_\e|^2\,\ud x\geq c>0.
\]
Thus following the argument as above, we get a contradiction.

Let us define $w_\e^2:=w_\e^1 - u_2(\cdot - y_{\e}^2)=u_\e - u_1(\cdot - y_{\e}^1) - u_2(\cdot - y_{\e}^2)$. Again, if $\|w_\e^2\|_{H^1(H_\e)}\to 0$, the proof is completed for $n=2$. Suppose now that $\| w_\e^2\|_{H^1(H_\e)}\nrightarrow 0$, $\|
w_\e^2\|_{L^{p+1}(H_\e)} \nrightarrow 0$. In such case we can
repeat the argument again. Observe that we would finish in a finite number of steps to conclude the proof. The only possibility left is the following:
\begin{equation} \label{no} \mbox{ at a certain step } j,\ \ \| w_\e^j\|_{H^1(H_\e)}\nrightarrow 0, \mbox{ and } \|
w_\e^j\|_{L^{2}(H_\e)} \to 0,
\end{equation}
where $w_\e^j=u_\e - \sum_{k=1}^j u_k(\cdot - y_{\e}^k)$.
\\
\
\\
{\bf Step 2: } The assertion \eqref{no} does not hold.

Suppose by contradiction \eqref{no}, there exists $\delta>0$ such that
\begin{equation*}
\| u_\e \|^2_{H^1(H_\e)} \geq \sum_{k=1}^j  \| u_k(\cdot -
y_\e^k) \|^2_{H^1(H_\e)} +\delta.
\end{equation*}
Let us define
$$
H_\e^1 = \left\{x\in \RT \mid \bar \lambda  \cdot x \leq
\frac{a_2}{\e}\right\},
$$
where $\frac{\alpha_1}{2}< a_1<\frac{2\alpha_1}{3}$. We claim that
\begin{equation}\label{eq:vu1p}
\| w_\e^j \|_{L^{2}(H_\e^1)} \nrightarrow 0.
\end{equation}
Otherwise, let us fix $R>0$ large enough, similar as in \cite{MR3426106,MR2900480}, for $\e$ small enough we have
\begin{equation}\label{eq:palle}
\|u_\e\|_{H^1(H_\e\setminus\cup_{k=1}^jB(y_\e^k,2R))}^2\ge\frac{\dd}{2}.
\end{equation}
By $(f_1)$-$(f_2)$, for any $\sigma>0$ and $\gamma>4\pi$ (close to $4\pi$) with $\gamma\|\na u_\e\|_2^2<4\pi$ for $\e$ small, there exists $C_\sigma>0$ such that
$$
|g_\e(x,u_\e)u_\e|\le C_\sigma|u_\e|^2+\sigma(e^{\gamma u_\e^2}-1),\,\,x\in\R^2.
$$
By Lemma \ref{MTI} and choosing a cut-off function $\phi$ as in \cite{MR2900480}, multiplying \eqref{constreq} by $\phi u_\e$, we have
$$
\int_{H_\e \setminus \left ( \cup_{k=1}^j B(y_\e^j, 2R)\right
)}\left (|\nabla u_\e|^2 + V(\e x) u_\e^2 \right )\,\ud x - \frac C R-D\sigma \leq C_\sigma
\int_{H_\e^1 \setminus \left ( \cup_{k=1}^j B(y_\e^j, R)\right )}
u_\e^2 \,\ud x.
$$
for some $D>0$. Combining \re{eq:palle} and the arbitrariness of $D,\sigma$, we can get a contradiction.

\medskip Finally, by \re{eq:vu1p} repeating the procedure above and similar as in \cite{MR3426106,MR2900480}, we can finish the proof.

\end{proof}
\noindent{\bf Proof of Proposition \ref{fund}}
\bp
We distinguish two cases as in \cite{MR2900480}.

\textbf{Case 1: $\bar \lambda. \bar y_i\geq 0$} In this case, similar to \cite{MR3426106}, using $\beta_\e(u_\e)=0$ and Proposition \ref{CCLion}, we get $\bar \lambda.\bar y_i=0$ for all $i=1,2, \cdot, n$. Hence using $(f3)$
 \begin{align*}
 b_\e&=\tilde I_\e(u_\e)-\frac{1}{\mu}\langle I^\prime_\e(u_\e),u_\e\rangle=\ds \int_{\mathbb R^2}\left(\frac1\mu g_\e(x, u_\e)u_\e-G_\e(x, u_\e)\right) \,\ud x\\
 &\geq \ds \int_{B_2^\e\cap H_\e}\left(\frac1\mu g_\e(x, u_\e)u_\e-G_\e(x, u_\e)\right) \,\ud x\\
 &=\ds \sum_{i=1}^n \ds \int_{\mathbb R^2}\left(\frac1\mu g(\bar y_i, u_i)u_i-G(\bar y_i, u_i)\right) \,\ud x+o_\e(1)=\ds \sum_{i=1}^n J_{\bar y_i}(u_i)+o_\e(1),
 \end{align*}
  where
 $J_y : H^1(\mathbb R^2) \to \R$,
\begin{equation*}
J_y (u) = \frac{1}{2}\int_{\RT} (|\nabla u|^2 + V(y)u^2)\,\ud x -
\int_{\RT} G(y,u)\,\ud x.
\end{equation*}
Hence
\[
 \liminf_{\varepsilon \rightarrow 0}b_\varepsilon\geq \ds \sum_{i=1}^n J_{\bar y_i}(u_i).
 \]
Since $g(\bar y_i, s)\leq f(s)$ for all $s\geq 0$, we have
 \[
 \liminf_{\varepsilon \rightarrow 0}b_\varepsilon\geq \ds \sum_{i=1}^n m_{V(\bar y_i)}.
 \]
 In case of  $n\geq 2$ using the monotonicity of $m_k$, since
$\bar y_i\in B_2$, the conclusion holds. In case of $n=1$, by $\beta_\e (u_\e)=0$, we get that $\ds \int_{B_3^\e\cap H_\e}\pi _E(\e x)u_\e^2\,\ud x\rightarrow 0$ as $\e\rightarrow 0$. Hence, $\pi_E(\bar y_1)\ds \int_{\mathbb R^2}u_1^2\,\ud x=0$, which implies that $\bar y_1\in E^\perp$. Thus
$
 \liminf_{\varepsilon \rightarrow 0}b_\varepsilon\geq m_{V(\bar y_1)}>m.
$

\textbf{Case 2: $\bar \lambda. \bar y_i<0$} This case is similar to Case 2 in \cite{MR2900480}.
\ep
\section{Proof of Theorem \ref{thm1}}

In this section, we conclude the proof of Theorem \ref{thm1} by studying  the asymptotic behavior of the
solution obtained in Section \ref{The min-max argument}. Let $u_{\e}$ be a critical point of $\tilde{I}_\e$ at
level $m_{\e}$. The following result describes the behavior of $u_{\e}$ as $\e \to 0$. The proof is similar as in \cite{MR2900480}.
\begin{proposition} \label{hola} Given a sequence $\e_{j} \to 0$, there exist a
subsequence (still denoted by $\e_j$) and a sequence of points
$y_{\e_j} \in \R^N$ such that
\[
\e_j y_{\e_j} \to 0, \| u_{\e_j} - U(\cdot -
y_{\e_j})\| \to 0,
\]
where $U$ is a positive ground state solution of $-\Delta u+u=f(u)$.
\end{proposition}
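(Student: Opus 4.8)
The plan is to carry out a concentration--compactness analysis of the critical point $u_\e$ of $\tilde I_\e$ at level $m_\e$, in the spirit of \cite{MR2900480} but using the exponential--critical tools of Section~6. Throughout one uses $m_\e\to m$ (from Propositions~\ref{estima} and \ref{fund}) and $m<\frac12(\frac12-\frac1\mu)$. I would first record the a priori bounds: $\{u_\e\}$ is bounded in $H^1(\RT)$ by the Ambrosetti--Rabinowitz estimate of Step~1 of Proposition~\ref{PSC} applied to $\mu\tilde I_\e(u_\e)-\langle\tilde I_\e'(u_\e),u_\e\rangle=\mu m_\e$, and, as in Step~2 of Proposition~\ref{PSC} (using $\frac{2\mu}{\mu-2}\cdot\frac12(\frac12-\frac1\mu)=\frac12$ and choosing the $L^2$--tail parameter small), $\limsup_{\e\to0}\|\na u_\e\|_2^2<\frac12$. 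This keeps the sequence strictly below the Trudinger--Moser threshold, so Lemma~\ref{MTI} is available uniformly in $\e$.

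Next I would prove non-vanishing and localize the concentration point. Since $\tilde I_\e'(u_\e)=0$, $m_\e=\tilde I_\e(u_\e)=\tilde I_\e(u_\e)-\frac12\langle\tilde I_\e'(u_\e),u_\e\rangle=\int_{\RT}(\frac12 g_\e(x,u_\e)u_\e-G_\e(x,u_\e))\,\ud x\ge m_{\al_1}>0$; hence if the hypothesis of Lemma~\ref{lionsadm} held then $\int g_\e(x,u_\e)u_\e\to0$, which through the equation forces $\|u_\e\|\to0$ and $m_\e\to0$, a contradiction. So there are $y_\e\in\RT$ and $R,c>0$ with $\int_{B(y_\e,R)}u_\e^2\ge c$. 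Arguing as in Lemmas~\ref{l42} and \ref{le:L4c} --- here more directly, there being no Lagrange multiplier --- on $(B_2^\e)^c$ the equation reads $-\Delta u_\e+V(\e x)u_\e=\tilde f(u_\e)\le a u_\e$ with $a<\al_1\le V(\e x)$, and multiplying by a Moser-type cut-off supported in $(B_3^\e)^c$ yields $\|u_\e\|_{H^1((B_4^\e)^c)}\to0$; together with non-vanishing this forces $y_\e\in B_5^\e$, so $\{\e y_\e\}$ is bounded and, along a subsequence, $\e y_\e\to\bar y\in\overline{B_5}$.

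I would then run the profile decomposition of Proposition~\ref{CCLion}, now with $\lambda_\e\equiv0$ and no barycentric restriction, using Lemmas~\ref{lionsadm}, \ref{spliting} and \cite[Lemma~2.1]{MR1386960} to pass to the limit in the nonlinear term: this gives $n\in\mathbb N$, points $y_\e^i$ with $\e y_\e^i\to\bar y_i$ and $|y_\e^i-y_\e^j|\to\infty$ for $i\neq j$, and nontrivial $u_i$ with $u_\e(\cdot+y_\e^i)\rightharpoonup u_i$ solving $-\Delta u_i+V(\bar y_i)u_i=g(\bar y_i,u_i)$, and $\|u_\e-\sum_{i=1}^n u_i(\cdot-y_\e^i)\|_{H^1(\RT)}\to0$. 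For $\bar y_i\notin B_2$ one has $g(\bar y_i,\cdot)=\tilde f$ with $\tilde f(t)\le at$ and $V(\bar y_i)-a\ge\al_1-a>0$, so testing the limit equation with $u_i$ forces $u_i\equiv0$; hence each $\bar y_i\in\overline{B_2}$. Passing the energy through the splitting (again via Lemma~\ref{spliting}) and using $g(\bar y_i,s)\le f(s)$ and $m_k=S^k_g$, one gets $m=\lim\tilde I_\e(u_\e)\ge\sum_{i=1}^n m_{V(\bar y_i)}$, exactly as in the proof of Proposition~\ref{fund}. Since the $R_i$ are small and $k\mapsto m_k$ is continuous (Lemma~\ref{monmk}), $m_{V(\bar y_i)}\ge m-\eta_0$ with $\eta_0<m/2$ fixed, so $n\ge2$ would give $m\ge2(m-\eta_0)>m$, a contradiction. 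Hence $n=1$ and $\|u_\e-u_1(\cdot-y_\e)\|_{H^1(\RT)}\to0$.

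Finally I would identify $\bar y$ and the profile. With $n=1$ no energy is lost, so $\lim\tilde I_\e(u_\e)=J_{\bar y}(u_1)=m$ and therefore $m_{V(\bar y)}\le m=m_{V(0)}$, whence $V(\bar y)\le V(0)$ by strict monotonicity of $m_k$. To upgrade this to $\bar y=0$ I would argue as in \cite{MR2900480}: using $m_\e\ge b_\e$ and the barycentric information carried by $\Xi_\e$, the concentration point satisfies $\pi_E(\bar y)=0$, i.e.\ $\bar y\in E^\perp$ (this is where (V1)/(V2)/(V3) enter); since $V|_{E^\perp}$ has a strict local minimum at $0$ one also gets $V(\bar y)\ge V(0)$, so $V(\bar y)=V(0)=1$ and, by non-degeneracy/isolatedness of $0$, $\bar y=0$ (in case (V1), $E=\RT$ and $V(\bar y)\le V(0)$ with $0$ an isolated local maximum already gives $\bar y=0$). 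As $0\in B_1$, $g(0,\cdot)=f$, so $u_1$ solves $-\Delta u_1+u_1=f(u_1)$ with $\Phi(u_1)=m=m_1$; being a nontrivial solution with least energy it is a ground state, and $u_1>0$ by the strong maximum principle (recall $u_\e>0$). Setting $U:=u_1$ gives $\e_j y_{\e_j}\to0$ and $\|u_{\e_j}-U(\cdot-y_{\e_j})\|\to0$. The two main obstacles are (i) the single-bubble decomposition in the exponential-critical regime, where, lacking a Sobolev a priori bound, the nonlinear terms across translating bubbles must be controlled through Lemmas~\ref{lionsadm} and \ref{spliting} and the uniform bound $\limsup\|\na u_\e\|_2^2<\frac12$ keeping everything below the Trudinger--Moser threshold; and (ii) passing from $V(\bar y)\le V(0)$ to $\bar y=0$ in the saddle cases, which requires the barycentric information inherited from $b_\e$ together with the precise local geometry of $V$ at $0$.
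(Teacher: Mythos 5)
Your first four steps (a priori bounds with $\limsup_{\e\to0}\|\na u_\e\|_2^2<\tfrac12$, non-vanishing via Lemma \ref{lionsadm}, decay outside $B_4^\e$, profile decomposition with $\lambda_\e\equiv0$, and the count $n=1$ via continuity of $k\mapsto m_k$ and smallness of the radii) are exactly the adaptation of \cite{MR2900480} that the paper intends here (it gives no proof, only the citation), and the way you keep everything below the Trudinger--Moser threshold is correct. The problem is the final identification of the concentration point. The critical point $u_\e$ of Proposition \ref{hola} is produced by the \emph{unconstrained} min-max over $\Gamma_\e$ at level $m_\e$; it is not an element of any $\Sigma\in\Xi_\e$ and there is no reason whatsoever that $\beta_\e(u_\e)=0$. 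The inequality $m_\e\ge b_\e$ is a comparison of \emph{levels} only --- it is used to prove $\liminf m_\e\ge m$ via the auxiliary constrained critical points of Lemma \ref{constrp}, and it transfers no localization information to $u_\e$ itself. So your claim ``the concentration point satisfies $\pi_E(\bar y)=0$'' is unjustified, and it is precisely the step that carries the whole weight of the saddle-point geometry.

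Without that piece, what you actually have is $m=J_{\bar y}(u_1)\ge m_{V(\bar y)}$, hence only $V(\bar y)\le V(0)$ by strict monotonicity of $k\mapsto m_k$. In the saddle cases this is compatible with $\bar y\in E\setminus\{0\}$ and $V(\bar y)<V(0)$: nothing in the energy identity prevents $u_1$ from being a \emph{higher-energy} critical point of $\Phi_{V(\bar y)}$ sitting exactly at level $m>m_{V(\bar y)}$, since the least-energy characterization gives a one-sided bound only. Your parenthetical treatment of case (V1) has the same defect with the inequality running the wrong way: ``$V(\bar y)\le V(0)$ with $0$ an isolated local maximum'' is automatically true for every $\bar y$ near $0$ and gives nothing; one needs the reverse bound $V(\bar y)\ge V(0)$ to force $V(\bar y)=V(0)$ and then $\bar y=0$. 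To close the gap one must extract extra information about $u_\e$ itself, not about the level: for instance a localized Pohozaev-type identity (multiply \eqref{truneq} by $\partial_ju_\e$, integrate, use the exponential decay and $\nabla\chi=0$ on $B_1$) yielding $\nabla V(\bar y)=0$ and hence $\bar y=0$ by isolatedness of the critical point, or a refined min-max producing a Palais--Smale sequence localized near the dual set $\{\beta_\e=0\}$. As written, your argument establishes the profile decomposition and $n=1$ but not $\e_jy_{\e_j}\to0$, nor that the limit equation has coefficient $V(0)=1$.
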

To show that $u_\e$ solves problem \eqref{PE}, it is enough to prove that
$u_{\e}(x) \to 0$ as $\e \to 0$ uniformly in $x \in (B_1^{\e})^c$. By Proposition \ref{hola} and $U\in L^\infty(\mathbb R^2)$, we can use the idea from \cite{MR1846738,MR3428453} to get $\sup_{\e}\|u_\e\|_{L^\infty(\mathbb R^2)}<\iy$.
By Proposition \ref{hola}, we obtain
$$ \|u_{\e}\|_{H^1((B_0^{\e})^c)} \leq \|u_{\e} - U(\cdot - y_{\e})\| + \| U(\cdot -
y_{\e})\|_{H^1((B_0^{\e})^c)} \to 0,$$ as $\e \to 0$.
For any $x \in (B_1^{\e})^c$, $B(x, 2)\subset \mathbb R^2\setminus B_0^\e$. Thus by uniform elliptic estimates, there exists $C>0$, independent of $x$, such that
$$ u_{\e}(x) \leq C
\|u_{\e}\|_{H^1(B(x,2))}  \leq C \|u_{\e}\|_{H^1((B_0^{\e})^c)}\to
0$$
as $\e\rightarrow 0$.  This concludes the proof.
\qed


\begin{thebibliography}{10}

\bibitem{Adams} Robert A. Adams and John J. F. Fournier, Sobolev Spaces(second edition), {\it Academic Press}(2003).


\bibitem{MR1079983}
Adimurthi.
\newblock Existence of positive solutions of the semilinear {D}irichlet problem
  with critical growth for the {$n$}-{L}aplacian.
\newblock {\em Ann. Scuola Norm. Sup. Pisa Cl. Sci. (4)}, 17(3):393--413, 1990.

\bibitem{MR3503193}
C.~O. Alves.
\newblock Existence of a positive solution for a
nonlinear elliptic equation with saddle-like
potential and nonlinearity with exponential
critical growth in $\mathbb R^2$.
\newblock{\em Milan J. Math.}, 84(1):1--22, 2016.

\bibitem{MR2036791}
C.~O. Alves, J.~M. do~\'O, and O.~H. Miyagaki.
\newblock On nonlinear perturbations of a periodic elliptic problem in {$\Bbb
  R^2$} involving critical growth.
\newblock {\em Nonlinear Anal.}, 56(5):781--791, 2004.

\bibitem{MR2875652}
C.~O. Alves, M.~A.~S. Souto, and M.~Montenegro.
\newblock Existence of a ground state solution for a nonlinear scalar field
  equation with critical growth.
\newblock {\em Calc. Var. Partial Differential Equations}, 43(3-4):537--554,
  2012.

\bibitem{MR1704875}
J.~M. B.~do \'O.
\newblock {$N$}-{L}aplacian equations in {$\bold R^N$} with critical growth.
\newblock {\em Abstr. Appl. Anal.}, 2(3-4):301--315, 1997.

\bibitem{MR695535}
H.~Berestycki and P.-L. Lions.
\newblock Nonlinear scalar field equations. {I}. {E}xistence of a ground state.
\newblock {\em Arch. Rational Mech. Anal.}, 82(4):313--345, 1983.

\bibitem{MR2168823}
J.~Byeon.
\newblock Mountain pass solutions for singularly perturbed nonlinear
  {D}irichlet problems.
\newblock {\em J. Differential Equations}, 217(2):257--281, 2005.

\bibitem{MR2574884}
J.~Byeon.
\newblock Singularly perturbed nonlinear {D}irichlet problems with a general
  nonlinearity.
\newblock {\em Trans. Amer. Math. Soc.}, 362(4):1981--2001, 2010.

\bibitem{MR2317788}
J.~Byeon and L.~Jeanjean.
\newblock Standing waves for nonlinear {S}chr\"odinger equations with a general
  nonlinearity.
\newblock {\em Arch. Ration. Mech. Anal.}, 185(2):185--200, 2007.

\bibitem{MR2424391}
J.~Byeon, L.~Jeanjean, and K.~Tanaka.
\newblock Standing waves for nonlinear {S}chr\"odinger equations with a general
  nonlinearity: one and two dimensional cases.
\newblock {\em Comm. Partial Differential Equations}, 33(4-6):1113--1136, 2008.

\bibitem{MR3082247}
J.~Byeon and K.~Tanaka.
\newblock Semi-classical standing waves for nonlinear {S}chr\"odinger equations
  at structurally stable critical points of the potential.
\newblock {\em J. Eur. Math. Soc. (JEMS)}, 15(5):1859--1899, 2013.

\bibitem{MR1163431}
D.~M. Cao.
\newblock Nontrivial solution of semilinear elliptic equation with critical
  exponent in {${\bf R}^2$}.
\newblock {\em Comm. Partial Differential Equations}, 17(3-4):407--435, 1992.

\bibitem{MR2900480}
P.~d'Avenia, A.~Pomponio, and D.~Ruiz.
\newblock Semiclassical states for the nonlinear {S}chr\"odinger equation on
  saddle points of the potential via variational methods.
\newblock {\em J. Funct. Anal.}, 262(10):4600--4633, 2012.

\bibitem{MR2772124}
D.~G. de~Figueiredo, J.~M. do~\'O, and B.~Ruf.
\newblock Elliptic equations and systems with critical {T}rudinger-{M}oser
  nonlinearities.
\newblock {\em Discrete Contin. Dyn. Syst.}, 30(2):455--476, 2011.

\bibitem{MR1386960}
D.~G. de~Figueiredo, O.~H. Miyagaki, and B.~Ruf.
\newblock Elliptic equations in {${\bf R}^2$} with nonlinearities in the
  critical growth range.
\newblock {\em Calc. Var. Partial Differential Equations}, 3(2):139--153, 1995.

\bibitem{MR1931757}
M.~del Pino and P.~Felmer.
\newblock Semi-classical states of nonlinear {S}chr\"odinger equations: a
  variational reduction method.
\newblock {\em Math. Ann.}, 324(1):1--32, 2002.

\bibitem{MR1379196}
M.~del Pino and P.~L. Felmer.
\newblock Local mountain passes for semilinear elliptic problems in unbounded
  domains.
\newblock {\em Calc. Var. Partial Differential Equations}, 4(2):121--137, 1996.

\bibitem{MR1471107}
M.~del Pino and P.~L. Felmer.
\newblock Semi-classical states for nonlinear {S}chr\"odinger equations.
\newblock {\em J. Funct. Anal.}, 149(1):245--265, 1997.

\bibitem{MR1736974}
M.~Del~Pino and P.~L. Felmer.
\newblock Spike-layered solutions of singularly perturbed elliptic problems in
  a degenerate setting.
\newblock {\em Indiana Univ. Math. J.}, 48(3):883--898, 1999.

\bibitem{MR2334939}
Y.~Ding and F.~Lin.
\newblock Solutions of perturbed {S}chr\"odinger equations with critical
  nonlinearity.
\newblock {\em Calc. Var. Partial Differential Equations}, 30(2):231--249,
  2007.

\bibitem{MR1846738}
J.~M. do~\'O and M.~A.~S. Souto.
\newblock On a class of nonlinear {S}chr\"odinger equations in {$\Bbb R^2$}
  involving critical growth.
\newblock {\em J. Differential Equations}, 174(2):289--311, 2001.

\bibitem{MR867665}
A.~Floer and A.~Weinstein.
\newblock Nonspreading wave packets for the cubic {S}chr\"odinger equation with
  a bounded potential.
\newblock {\em J. Funct. Anal.}, 69(3):397--408, 1986.

\bibitem{MR1391524}
C.~Gui.
\newblock Existence of multi-bump solutions for nonlinear {S}chr\"odinger
  equations via variational method.
\newblock {\em Comm. Partial Differential Equations}, 21(5-6):787--820, 1996.

\bibitem{MR1974637}
L.~Jeanjean and K.~Tanaka.
\newblock A remark on least energy solutions in {${\bf R}^N$}.
\newblock {\em Proc. Amer. Math. Soc.}, 131(8):2399--2408, 2003.

\bibitem{MR3145918}
N.~Lam and G.~Lu.
\newblock Elliptic equations and systems with subcritical and critical
  exponential growth without the {A}mbrosetti-{R}abinowitz condition.
\newblock {\em J. Geom. Anal.}, 24(1):118--143, 2014.

\bibitem{MR850686}
P.-L. Lions.
\newblock The concentration-compactness principle in the calculus of
  variations. {T}he limit case. {II}.
\newblock {\em Rev. Mat. Iberoamericana}, 1(2):45--121, 1985.

\bibitem{MR1219814}
W.~M. Ni and I.~Takagi.
\newblock Locating the peaks of least-energy solutions to a semilinear
  {N}eumann problem.
\newblock {\em Duke Math. J.}, 70(2):247--281, 1993.

\bibitem{MR1342381}
W.~M. Ni and J.~Wei.
\newblock On the location and profile of spike-layer solutions to singularly
  perturbed semilinear {D}irichlet problems.
\newblock {\em Comm. Pure Appl. Math.}, 48(7):731--768, 1995.

\bibitem{MR970154}
Y.~G. Oh.
\newblock Existence of semiclassical bound states of nonlinear {S}chr\"odinger
  equations with potentials of the class {$(V)_a$}.
\newblock {\em Comm. Partial Differential Equations}, 13(12):1499--1519, 1988.

\bibitem{MR1162728}
P.~H. Rabinowitz.
\newblock On a class of nonlinear {S}chr\"odinger equations.
\newblock {\em Z. Angew. Math. Phys.}, 43(2):270--291, 1992.

\bibitem{MR1218300}
X.~Wang.
\newblock On concentration of positive bound states of nonlinear
  {S}chr\"odinger equations.
\newblock {\em Comm. Math. Phys.}, 153(2):229--244, 1993.

\bibitem{MR3291802}
J.~Zhang, Z.~Chen, and W.~Zou.
\newblock Standing waves for nonlinear {S}chr\"odinger equations involving
  critical growth.
\newblock {\em J. Lond. Math. Soc. (2)}, 90(3):827--844, 2014.

\bibitem{MR3428453}
J.~Zhang and J.~M. do~\'O.
\newblock Standing waves for nonlinear {S}chr\"odinger equations involving
  critical growth of {T}rudinger-{M}oser type.
\newblock {\em Z. Angew. Math. Phys.}, 66(6):3049--3060, 2015.

\bibitem{MR3426106}
J.~Zhang and W.~Zou.
\newblock Solutions concentrating around the saddle points of the potential for
  critical {S}chr\"odinger equations.
\newblock {\em Calc. Var. Partial Differential Equations}, 54(4):4119--4142,
  2015.

\end{thebibliography}
\end{document}